\documentclass[12pt,a4paper,reqno]{amsart}

\usepackage[lmargin=1.2in, rmargin= 1.2in, tmargin=1.5in]{geometry}

\title{Frobenius templicial modules and the dg-nerve}

\author{Wendy Lowen} 
\address[Wendy Lowen]{Universiteit Antwerpen, Departement Wiskunde, Middelheimcampus,
Middelheimlaan 1,
2020 Antwerp, Belgium}
\email{wendy.lowen@uantwerpen.be}

\author{Arne Mertens}
\address[Arne Mertens]{Universiteit Antwerpen, Departement Wiskunde, Middelheimcampus,
Middelheimlaan 1,
2020 Antwerp, Belgium}
\email{arne.mertens@uantwerpen.be}

\thanks{
This project has received funding from the European Research Council (ERC) under the European Union’s Horizon 2020 research and innovation programme (grant agreement No. 817762).\\
The second named author was a predoctoral fellow of the Research Foundation -  Flanders (FWO), file number 1137921N. The current paper is based on part of the resulting PhD thesis \cite{mertens2022templicial}.
}

\makeatletter
\@namedef{subjclassname@2020}{
  \textup{2022} Mathematics Subject Classification}
\makeatother

\subjclass[2022]{18N60, 18G35 (Primary), 18N50, 18M05 (Secondary)}

\keywords{}

\usepackage{graphicx}
\usepackage[english]{babel}
\usepackage{amsmath}
\usepackage{amssymb}
\usepackage{amsthm}
\usepackage[all,cmtip]{xy}
\usepackage{cancel}
\usepackage[alpine]{ifsym}
\usepackage{enumerate}
\usepackage{stmaryrd}
\usepackage{tikz}
\usepackage{tikz-cd}
\usetikzlibrary{matrix}
\usepackage[toc,page]{appendix}
\usepackage{hyperref}
\hypersetup{breaklinks=true}
\usepackage[anythingbreaks]{breakurl}

\usepackage[style=alphabetic, maxnames = 50, maxalphanames=50, urldate=long]{biblatex}
\addbibresource{bibfile.bib}
\setcounter{biburlnumpenalty}{8000}
\setcounter{biburllcpenalty}{8000}
\setcounter{biburlucpenalty}{8000}



\DeclareMathOperator{\Ob}{Ob}

\DeclareMathOperator{\id}{id}

\DeclareMathOperator{\Fun}{Fun}

\DeclareMathOperator*{\colim}{colim}

\DeclareMathOperator{\Set}{Set}

\DeclareMathOperator{\Ab}{Ab}
\DeclareMathOperator{\Mod}{Mod}

\DeclareMathOperator{\Ch}{Ch}

\DeclareMathOperator{\Lin}{Lin}
\DeclareMathOperator{\SSet}{SSet}

\DeclareMathOperator{\Quiv}{Quiv}
\DeclareMathOperator{\Cat}{Cat}


\newcommand{\fint}{\mathbf{\Delta}_{f}} 
\newcommand{\simp}{\mathbf{\Delta}} 
\newcommand{\asimp}{\mathbf{\Delta}_{+}} 
\newcommand{\nec}{\mathcal{N}ec} 
\newcommand{\ts}{S_{\otimes}} 
\newcommand{\Fs}{S^{Frob}_{\otimes}} 


\newtheorem{Thm}{Theorem}[section]
\newtheorem*{Thm*}{Theorem}
\newtheorem{Lem}[Thm]{Lemma}
\newtheorem{Prop}[Thm]{Proposition}
\newtheorem{Cor}[Thm]{Corollary}

\theoremstyle{definition}
\newtheorem{Def}[Thm]{Definition}
\newtheorem{Ex}[Thm]{Example}

\newtheorem{Con}[Thm]{Construction}
\newtheorem{Not}[Thm]{Notation}

\theoremstyle{remark}
\newtheorem{Rem}[Thm]{Remark}

\begin{document}

\begin{abstract}
Templicial objects were put forth in \cite{lowen2023enriched} to set up a suitable simplicial framework for enriched quasi-categories. Following Leinster \cite{leinster2000homotopy}, these objects feature certain comultiplications as a replacement for outer face maps in the non-cartesian case. In the present paper, we consider Frobenius templicial objects, thus re-introducing multiplications into the picture. When enriching over $k$-modules for a commutative ring $k$, we prove an equivalence of categories between (homologically) positively graded dg-categories on the one hand and Frobenius templicial modules on the other hand. This equivalence yields a natural enrichment of the dg-nerve  \cite{block2009Riemann} \cite{lurie2016higher}, turning dg-categories into quasi-categories \emph{in modules}. Assuming a projectivity condition, we further prove that a templicial module is a quasi-category in modules precisely when it can be equipped with a \emph{nonassociative} Frobenius structure.
\end{abstract}

\maketitle

\tableofcontents

\section{Introduction}\label{section: Introduction}

\subsection{Motivation}

Ever since the Grothendieck school revolutionised the subject of algebraic geometry, the importance of derived and triangulated categories in this field as well as in neighbouring fields like representation theory or the theory of D-modules, has continued to grow. This eventually gave rise to the development of various flavours of what one might call ``categorical algebraic geometry’’, characterised on the one hand by the idea of putting categorical models for ``spaces’’ centre stage, and on the other hand by the realisation that in general, these models should be of a more algebraic nature than ordinary categories in order to ``do geometry’’ with them. One instance of this principle originated from noncommutative ring theory in the work of Van den Bergh and others and became known as ``noncommutative algebraic geometry’’ (NCAG) \cite{artintatevandenbergh}, \cite{stafford2001nc} \cite{kuznetsov2007homological} \cite{spenko2017nc}. Another instance is ``derived algebraic geometry’’ (DAG) in the sense of To\"en and Vezzosi \cite{toen2008homotopicalII} \cite{toen2014derived} and Lurie \cite{lurie2016higher}. 

Both in NCAG and in DAG, inspired by appropriate (enhanced, derived) categories of quasi-coherent sheaves, suitable dg-categories eventually emerged as appropriate models for spaces \cite{bondalkapranov}, \cite{keller2006differential}. In particular, these models have allowed for the development of noncommutative Hodge theory and a theory of noncommutative motives by Keller, Kontsevich, Tabuada and others \cite{keller1999cyclic} \cite{katzarkov2008hodge} \cite{tabuada2008higher} \cite{cisinski2014lefschetz} \cite{robalo2015ktheory}. 
The notion that dg-categories are indeed geometric objects which at the same time belong to the realm of algebraic topology is exemplified by Kaledin’s proof of Hodge-De Rham degeneration for smooth proper dg-algebras \cite{kontsevich2009notes} \cite{KaledinHdR} \cite{kaledin2017spectral} \cite{Kal2}.
In the wake of these developments, the question of existence and uniqueness of dg-enhancements has become centrally important \cite{bondal2003generators} \cite{canonaco2017tour} \cite{rizzardo2019example}.

Dg-categories being categories enriched in the symmetric monoidal model category of chain complexes over a commutative ring $k$, their category $k\Cat_{dg}$ can be endowed both with a monoidal structure, and with a model structure due to Tabuada \cite{tabuada2005structure}.
This model structure is however not \emph{model monoidal}, which makes it in some respects less convenient to work with. For instance, negotiating this shortcoming, in \cite{toen2007homotopy} To\"en has shown by hand that the simplicial localisation of $k\Cat_{dg}$ gives rise to a closed monoidal structure on the homotopy category, a crucial ingredient in the development of homotopy Morita theory for dg-categories (see also \cite{canonaco2015internal}).

In higher category theory, one encounters an analogous situation, where the category of simplicial categories with the Bergner model structure (the ``strict model’’) is not model monoidal, and here this is remedied by turning to several Quillen equivalent weak models - notably Segal categories, complete Segal spaces and quasi-categories \cite{bergner2007model} \cite{hirschowitz1998descente} \cite{rezk2001model} \cite{joyal2002quasi}.
This has inspired several attempts to come up with alternative ``weak models’’ for dg-categories as well. Recently, a model of ``dg-Segal spaces'' was put forth by Dimitriadis Bermejo \cite{dimitriadisbermejo2023model}. Earlier on, building on work by Simpson and Leinster, ``dg-Segal categories'' have been investigated by Bacard \cite{bacard2010Segal} \cite{simpson2012homotopy} \cite{leinster2000homotopy}. 
The present paper is part of a project in which the main goal is to establish a model of ``quasi-categories in modules’’ realising weak enrichment in simplicial modules.
Approaches to linearity and more general enrichment of higher categories of a different flavour are due to Lurie \cite{lurie2016higher} and Gepner-Haugseng \cite{gepner2015enriched}.

Apart from the general motivation of completing the picture of the enriched higher categorical landscape we just sketched, our particular interest in quasi-categories stems among others from algebraic deformation theory in the sense of Gerstenhaber \cite{gerstenhaber1} 
\cite{gerstenhaber1988cohomology}. 
In NCAG, the choice of a particular algebraic model for a space comes with an associated deformation theory, shedding its own light upon the notion of a noncommutative space. For instance, graded algebras, and more generally $\mathbb{Z}$-algebras in projective geometry give rise to the famous noncommutative planes and quadrics under deformation \cite{artintatevandenbergh} \cite{bondalpolishchuk} \cite{VdBquadrics}, whereas deforming the structure sheaf of a scheme naturally leads to prestacks \cite{lowenprestacks} \cite{VdBdefquant} \cite{dinh2023box}. 
 
Following Leinster \cite{leinster2000homotopy}, templicial modules were put forth in \cite{lowen2023enriched} as an appropriate framework for considering \emph{quasi-categories in modules} as a counterpart of the highly succesful quasi-categories ``in sets'' of Boardman-Vogt \cite{boardmanvogt}, Joyal \cite{joyal2002quasi} and Lurie \cite{lurie2009higher}. Although more involved in terms of algebraic structure, these templicial modules are just as tangible as simplicial sets, and in particular they are amenable to algebraic deformation theory. In joint work with Violeta Borges Marques \cite{lowen2023deformations}, we show among other things that the property of being a quasi-category in modules is preserved under infinitesimal deformation, as desired. Further, deformations are governed by a Hochschild type complex currently under investigation.
As such, quasi-categories in modules constitute a new candidate model for noncommutative spaces, with a distinctively higher categorical flavour. Note that from the point of view of Hochschild cohomology, dg-categories deform into weak variants of a rather different type, namely $A_{\infty}$-categories \cite{lefevre} \cite{keller2001introduction}. The present paper will provide some of the tools to investigate the relation between $A_{\infty}$-categories on the one hand and quasi-categories in modules on the other hand from a deformation theoretic perspective.

\subsection{Results}\label{section: Introduction_oud}

Let us now turn to the contents of the present paper. The most direct way of turning a dg-category into a higher category is arguably the construction of the dg-nerve \cite{block2009Riemann} \cite{lurie2016higher} which, more precisely, produces a quasi-category. A priori, linearity is lost in this process, and one way of reintroducing it is by turning to stable linear $\infty$-categories in the sense of Lurie \cite{lurie2016higher}. It has become common knowledge that working with pre-triangulated dg-categories on the one hand and with stable linear $\infty$-categories on the other hand are equivalent - see e.g. \cite{cohn} for a precise statement. 
However, this approach raises a natural question: what if we remove pre-triangulated? For sure, an ordinary category can be linear without being abelian or even additive, as is the case for any non-trivial ring seen as a one object category. In contrast, linearity of a stable $\infty$-category is expressed through the action of a module category and uses at least some aspect of additivity.

The first goal of the present paper is to enrich the dg-nerve taking a different approach, by enhancing the codomain in a way that works in complete generality, regardless of stability.
More precisely, we construct a \emph{templicial dg-nerve} $N^{dg}_{k}$ from the category $k\Cat_{dg}$ of small dg-categories to the category $\ts\Mod(k)$ of templicial modules in the sense of \cite{lowen2023enriched}, which lands in quasi-categories in modules and makes the following diagram commute:
\[\begin{tikzcd}
	{k\Cat_{dg}} & {\ts\Mod(k)} \\
	 & \SSet
	\arrow["{N^{dg}}"', from=1-1, to=2-2]
	\arrow["{N^{dg}_{k}}", from=1-1, to=1-2]
	\arrow["{\tilde{U}}", from=1-2, to=2-2]
\end{tikzcd}\]
Here, $\tilde{U}$ sends a templicial module to its underlying simplicial set (see Proposition \ref{proposition: properties of temp. objs.}). Note that the structure of a templicial module $X$ is considerably richer than that of $\tilde{U}(X)$, and $X_n$ fails to have $\tilde{U}(X)_n$ as underlying set \cite[Example 2.10]{lowen2023enriched}.

The construction of $N^{dg}_{k}$ has two parts. Dg-categories are enriched in chain complexes, and in the first step, we replace chain complexes by \emph{augmented} simplicial modules. To this end we formulate a modified version of the classical Dold-Kan correspondence, which is compatible with the monoidal structures (Proposition \ref{proposition: augmented Dold-Kan correspondence}). 
In the second step, we apply a kind of bar construction to the resulting enriched category (Theorem \ref{theorem: augmented simp. cat. and Frob. temp. obj. equiv.}). This takes values in templicial modules, which are by definition colax monoidal functors on the finite interval category landing in $k$-quivers (see Definition \ref{definition: temp. obj.}). A templicial module $X$ features comultiplication morphisms
$$
\left(\mu_{p,q}: X_{p+q}\rightarrow X_{p}\otimes_{S} X_{q}\right)_{p,q\geq 0},
$$
which can be interpreted as ``pulling a $(p+q)$-simplex apart into outer faces''.\\

\begin{center}
\begin{tikzpicture}
\filldraw
(0,-0.289) circle (1pt)
(0.75,0.577) circle (1pt)
(1.5,-0.289) circle (1pt)
(0.75,-0.7) circle (1pt);
\draw[-latex]
(0,-0.289) -- (0.75,0.577);
\draw[-latex]
(0.75,0.577) -- (1.5,-0.289);
\draw[-latex]
(1.5,-0.289) -- (0.75,-0.7);
\draw[-latex]
(0.75,0.577) -- (0.75,-0.7);
\draw[-latex]
(0,-0.289) -- (0.75,-0.7);
\draw[-latex, dotted]
(0,-0.289) -- (1.5,-0.289);
\fill[fill=gray,opacity=0.4]
(1.5,-0.289) -- (0.75,0.577) -- (0.75,-0.7);
\fill[fill=gray,opacity=0.2]
(0,-0.289) -- (0.75,0.577) -- (0.75,-0.7);

\draw
(0,1) node{};

\draw
(3,0) node{$\underset{\mu_{1,2}}{\longmapsto}$};

\filldraw
(4.5,-0.289) circle (1pt)
(5.25,0.577) circle (1pt)
(6,-0.289) circle (1pt)
(5.25,-0.7) circle (1pt);
\draw[-latex]
(4.5,-0.289) -- (5.25,0.577);
\draw[-latex]
(5.25,0.577) -- (6,-0.289);
\draw[-latex]
(6,-0.289) -- (5.25,-0.7);
\draw[-latex]
(5.25,0.577) -- (5.25,-0.7);
\fill[fill=gray,opacity=0.4]
(6,-0.289) -- (5.25,0.577) -- (5.25,-0.7);
\end{tikzpicture}
\end{center}

Now, the bar type construction also yields multiplication morphisms
$$
Z^{p,q}: X_{p}\otimes_{S} X_{q}\rightarrow X_{p+q}
$$
in the opposite direction. This makes $X$ into a Frobenius monoidal functor in the sense of \cite{day2008note}. Denote by
$$
\Fs\Mod(k)
$$
the category of Frobenius templicial modules. We show (see Proposition \ref{proposition: Frob. temp. obj. and dg-cat. equivalence}):

\begin{Thm*}
The functor $N^{dg}_{k}$ factors through an equivalence of categories
$$
k\Cat_{dg,\geq 0}\simeq \Fs\Mod(k)
$$
followed by the forgetful functor $\Fs\Mod(k)\rightarrow \ts\Mod(k)$.
\end{Thm*}

The second part of the paper is devoted to elucidating the relation between quasi-categories in modules on the one hand and Frobenius templicial modules on the other hand.
To this end, we consider a relaxation of the latter, in which we drop associativity of the multiplications.
In fact, this can be done in the generality of a suitable symmetric monoidal $\mathcal{V}$ (see \S \ref{subsection: Notations and conventions} below), and we will derive some of our results in this more general context.
We call the resulting objects \emph{naF-templicial objects}, where `naF' is short for `nonassociative Frobenius'. Similarly to the comultiplications, the multiplications in a naF-structure can be interpreted as ``filling up simplices joint at a vertex to an entire simplex''.

\begin{center}
\begin{tikzpicture}
\filldraw
(0,-0.289) circle (1pt)
(0.75,0.577) circle (1pt)
(1.5,-0.289) circle (1pt)
(0.75,-0.7) circle (1pt);
\draw[-latex]
(0,-0.289) -- (0.75,0.577);
\draw[-latex]
(0.75,0.577) -- (1.5,-0.289);
\draw[-latex]
(1.5,-0.289) -- (0.75,-0.7);
\draw[-latex]
(0.75,0.577) -- (0.75,-0.7);
\fill[fill=gray,opacity=0.4]
(1.5,-0.289) -- (0.75,0.577) -- (0.75,-0.7);

\draw
(3,0) node{$\underset{Z^{1,2}}{\longmapsto}$};

\filldraw
(4.5,-0.289) circle (1pt)
(5.25,0.577) circle (1pt)
(6,-0.289) circle (1pt)
(5.25,-0.7) circle (1pt);
\draw[-latex]
(4.5,-0.289) -- (5.25,0.577);
\draw[-latex]
(5.25,0.577) -- (6,-0.289);
\draw[-latex]
(6,-0.289) -- (5.25,-0.7);
\draw[-latex]
(5.25,0.577) -- (5.25,-0.7);
\draw[-latex]
(4.5,-0.289) -- (5.25,-0.7);
\draw[-latex, dotted]
(4.5,-0.289) -- (6,-0.289);
\fill[fill=gray,opacity=0.4]
(6,-0.289) -- (5.25,0.577) -- (5.25,-0.7);
\fill[fill=gray,opacity=0.2]
(4.5,-0.289) -- (5.25,0.577) -- (5.25,-0.7);
\end{tikzpicture}
\end{center}

In that respect, a templicial object with a naF-structure is reminiscent of a quasi-category in $\mathcal{V}$. But there are two important differences. Firstly, naF-structures only allow to fill up simplices joint at a vertex - and more generally the necklaces of \cite{baues1980geometry}\cite{dugger2011rigidification} - but not inner horns. Secondly, naF-structures give a specified choice of fillers while quasi-categories only require existence. Nonetheless, both concepts are related, as we now explain.

We will call a templicial object \emph{deg-projective} provided that it has projective inclusions of degenerate simplices (that is, they have the left lifting property with respect to all regular epimorphisms). Deg-projective templicial modules have well-behaved submodules of degenerate and non-degenerate simplices, and according to \cite{lowen2023enriched} they satisfy a version of the Eilenberg-Zilber Lemma.

Our main results can be summarised as follows (appearing in the text below as Proposition \ref{proposition: cofibrant fibrant temp. obj. has naF-structure} and Theorem \ref{theorem: linear naF-structure implies fibrant}):

\begin{Thm*}
\begin{enumerate}[1.]
	\item Every deg-projective quasi-category in $\mathcal{V}$ has a naF-structure.
	\item Taking $\mathcal{V} = \Mod(k)$, every naF-templicial module is a quasi-category in modules.
\end{enumerate}	
\end{Thm*}

The second of these results makes essential use of the additivity of $\Mod(k)$, using alternating sums of the multiplication maps of the naF-structure to fill in horns.

Note that ordinary quasi-categories (in $\mathcal{V} = \Set$) are automatically deg-projective, whence these can always be equipped with a naF-structure.
As a consequence, ordinary quasi-categories are a natural source of quasi-categories in modules by applying the (levelwise) free functor, which preserves naF-structures.

Finally, making use of the quasi-category property, in Proposition \ref{proposition: comparison of linear nerve and dg-nerve} we show that the templicial dg-nerve coincides with the templicial nerve from \cite{lowen2023enriched} when restricted to linear categories (i.e. dg-categories concentrated in degree zero).

\subsection{Future prospects}
To end this introduction, we comment upon work in progress and future prospects.
Let $\mathcal{V}$ be a suitable symmetric monoidal category as in \S \ref{subsection: Notations and conventions}. In \cite{lowen2023enriched}, we constructed a templicial homotopy coherent
nerve
$$
N^{hc}_{\mathcal{V}}: \mathcal{V}\Cat_{\Delta}\rightarrow \ts\mathcal{V}
$$
where $\mathcal{V}\Cat_{\Delta}$ denotes the category of small $S\mathcal{V}$-enriched categories.

Given a dg-category $\mathcal{C}$, there is a classical comparison map $N^{hc}(\mathcal{C}^{\triangle})\rightarrow N^{dg}(\mathcal{C})$ between the homotopy coherent nerve of its associated simplicial category and its dg-nerve. This map is an equivalence of quasi-categories \cite{lurie2016higher} \cite{faonte2017simplicial} and even a trivial fibration \cite[Tag 00SV]{kerodon}. 
In upcoming work by the second named author \cite{mertens2023nerves}, this result will be lifted to the templicial level, making use of a general procedure for generating enriched nerves.

Further we will show the existence of a model structure making $N^{hc}_{\mathcal{V}}$ above into a Quillen equivalence, furnishing $\ts\mathcal{V}$ as a model for categories weakly enriched in $S\mathcal{V}$. This remains work in progress.

Note that as a consequence of truncation in the (templicial) dg-nerve, the higher approach naturally relates templicial modules to dg-categories concentrated in homologically positive degrees. This is in contrast to e.g. \cite{cohn}, which involves unbounded pre-triangulted dg-categories and uses the passage through spectral categories.
In future work, we will relate our approach to the setup of \cite{cohn}. This will involve an appropriate notion of stability for quasi-categories in modules, allowing for a comparison with the stable linear $\infty$-categories of \cite{lurie2016higher}.

\subsection{Notations and conventions}\label{subsection: Notations and conventions}

\begin{enumerate}[1.]
\item Throughout the text, we let $(\mathcal{V},\otimes,I)$ be a fixed bicomplete, symmetric monoidal closed category (i.e. a B\'enabou cosmos in the sense of \cite{street1974elementary}). Up to natural isomorphism, there is a unique colimit preserving functor $F: \Set\rightarrow \mathcal{V}$ such that $F(\{*\}) = I$. This functor is left-adjoint to the forgetful functor $U = \mathcal{V}(I,-): \mathcal{V}\rightarrow \Set$. Endowing $\Set$ with the cartesian monoidal structure, $F$ is strong monoidal and $U$ is lax monoidal. These notations will remain fixed as well.

\item An adjunction between monoidal categories is called \emph{monoidal} if its left-adjoint has a strong monoidal structure. Note that the right-adjoint inherits a lax monoidal structure in this case (see \cite[Proposition 3.84]{aguiar2010monoidal} for example). A monoidal adjunction that is also an equivalence is called a \emph{monoidal equivalence}.

\item Let $(\mathcal{W},\otimes,I)$ be an arbitrary monoidal category. Given a set $S$, we refer to a collection $Q = (Q(a,b))_{a,b\in S}$ with $Q(a,b)\in \mathcal{W}$ as a \emph{$\mathcal{W}$-enriched quiver} with $S$ its set of \emph{vertices}. A \emph{quiver morphism} $f: Q\rightarrow P$ is a collection $(f_{a,b})_{a,b\in S}$ of morphisms $f_{a,b}: Q(a,b)\rightarrow P(a,b)$ in $\mathcal{W}$. $\mathcal{W}$-enriched quivers with a fixed set of vertices $S$ and morphisms between them form a category which we denote by
$$
\mathcal{W}\Quiv_{S}
$$
This category is monoidal with product $\otimes_{S}$ and unit $I_{S}$ defined by
$$
(Q\otimes_{S} P)(a,b) = \coprod_{c\in S}Q(a,c)\otimes P(c,b)\quad \text{and}\quad I_{S}(a,b) =
\begin{cases}
I & \text{if }a = b\\
0 & \text{if }a\neq b
\end{cases}
$$
for all $Q,P\in \mathcal{W}\Quiv_{S}$ and $a,b\in S$.

\item Let $f: S\rightarrow T$ be a map of sets. We have an induced lax monoidal functor $f^{*}: \mathcal{W}\Quiv_{T}\rightarrow \mathcal{W}\Quiv_{S}$ given by $f^{*}(Q)(a,b) = Q(f(a),f(b))$ for all $\mathcal{W}$-enriched quivers $Q$ and $a,b\in S$. The functor $f^{*}$ has a left-adjoint which we denote by $f_{!}: \mathcal{W}\Quiv_{S}\rightarrow \mathcal{W}\Quiv_{T}$. As $f^{*}$ is canonically lax monoidal, $f_{!}$ comes equipped with an induced colax monoidal structure.

\item To relate dg-categories to templicial modules (see \S\ref{subsection: The templicial dg-nerve}), it will be more convenient for us to consider a $\mathcal{W}$-enriched category (or $\mathcal{W}$-category for short) as a pair $(\mathcal{C},\Ob(\mathcal{C}))$ with $\Ob(\mathcal{C})$ its set of objects and $\mathcal{C}$ a monoid in $\mathcal{W}\Quiv_{\Ob(\mathcal{C})}$. Note that this convention implies that the composition in $\mathcal{C}$ is given by a collection of morphisms in $\mathcal{W}$, for all $A,B,C\in \Ob(\mathcal{C})$:
$$
m_{\mathcal{C}}: \mathcal{C}(A,B)\otimes \mathcal{C}(B,C)\rightarrow \mathcal{C}(A,C)
$$
as opposed to the more conventional $\mathcal{C}(B,C)\otimes \mathcal{C}(A,B)\rightarrow \mathcal{C}(A,C)$. A $\mathcal{W}$-functor $\mathcal{C}\rightarrow \mathcal{D}$ is then a pair $(H,f)$ with $f: \Ob(\mathcal{C})\rightarrow \Ob(\mathcal{D})$ a map of sets and $H: \mathcal{C}\rightarrow f^{*}(\mathcal{D})$ a morphism of monoids in $\mathcal{W}\Quiv_{\Ob(\mathcal{C})}$, where we used the lax structure of $f^{*}$. We denote the category of small $\mathcal{W}$-categories and $\mathcal{W}$-functors between them by
$$
\mathcal{W}\Cat
$$

In the case of dg-categories where $\mathcal{W} = \Ch(k)$ is the category of chain complexes, we must be careful about the Koszul sign rule. To avoid confusion, we will never denote the composition $m_{\mathcal{C}}$ of a dg-category $\mathcal{C}$ by $\circ$, but we put
$$
g\circ f\doteqdot (-1)^{p\cdot q}m_{\mathcal{C}}(f\otimes g)
$$
for any $f\in \mathcal{C}_{p}(A,B)$ and $g\in \mathcal{C}_{q}(B,C)$ with $A,B,C\in \Ob(\mathcal{C})$ and $p,q\in \mathbb{Z}$.

\item We will make use of the simplex categories $\fint\subseteq \simp\subseteq \asimp$, where:
\begin{itemize} 
\item $\asimp$ is the \emph{augmented simplex category}. Its objects are the posets $[n] = \{0,...,n\}$ with $n\geq -1$ (where $[-1] = \emptyset$), and its morphisms are the order morphisms $[m]\rightarrow [n]$. 
\item $\simp$ is the ordinary \emph{simplex category}. It is the full subcategory of $\asimp$ spanned by the objects $[n]$ with $n\geq 0$.
\item $\fint$ is the category of \emph{finite intervals}, which is the subcategory of $\simp$ consisting of all morphisms $f: [m]\rightarrow [n]$ that preserve the endpoints, that is $f(0) = 0$ and $f(m) = n$.
\end{itemize}
We denote the usual coface maps by $\delta_{i}: [n-1]\rightarrow [n]$ and the codegeneracy maps by $\sigma_{i}: [n+1]\rightarrow [n]$ (for all $0\leq i\leq n$). Note that each of the categories $\asimp$, $\simp$ and $\fint$ is generated by the coface and codegeneracy maps that they contain.

Unlike $\simp$, $\fint$ and $\asimp$ carry monoidal structures $(+,[0])$ and $(\star,[-1])$ respectively which are given as follows. 
For all $m,n\geq 0$ and $p,q\geq -1$:
$$
[m] + [n] = [m+n]\quad \text{and}\quad [p]\star [q] = [p + q + 1]
$$
For $f: [m]\rightarrow [m']$, $g: [n]\rightarrow [n']$ in $\fint$, and $r: [p]\rightarrow [p']$, $s: [q]\rightarrow [q']$ in $\asimp$:
\begin{align*}
(f + g)(i) &=
\begin{cases}
f(i) & \text{if }i\leq m\\
m' + g(i - m) & \text{if }i\geq m
\end{cases}\\
(r \star s)(i) &=
\begin{cases}
r(i) & \text{if }i\leq p\\
p' + 1 + s(i - p - 1) & \text{if }i > p
\end{cases}
\end{align*}
There is a well-known monoidal equivalence between $\fint^{op}$ and the augmented simplex category $\asimp$, see \cite{joyal1997disks}.

Note that for any morphism $f: [m]\rightarrow [n]$ in $\fint$ and $k,l\geq 0$ such that $k + l = m$, there exist unique morphisms $f_{1}: [k]\rightarrow [p]$ and $f_{2}: [l]\rightarrow [q]$ in $\fint$ such that $f_{1} + f_{2} = f$. Dually, for any morphism $f: [m]\rightarrow [n]$ in $\asimp$ and $p,q\geq -1$ such that $p + q + 1 = n$, there exist unique morphisms $f_{1}: [k]\rightarrow [p]$ and $f_{2}: [l]\rightarrow [q]$ in $\asimp$ such that $f_{1}\star f_{2} = f$.
\end{enumerate}

\vspace{0,3cm}
\noindent \emph{Acknowledgement.}
The authors are grateful to Violeta Borges Marques, Elena Dimitriadis Bermejo, Rune Haugseng, Lander Hermans, Bernhard Keller, Dmitry Kaledin, Tom Leinster, Boris Shoikhet, Carlos Simpson, Bertrand To\"en, Michel Van den Bergh and Ittay Weiss for valuable interactions regarding this project and the enriched higher categorical landscape.

\section{Templicial objects and Frobenius structures}\label{section: Templicial objects and Frobenius structures}

In this section, we introduce our main objects of study.

In \cite{joyal2002quasi}, Joyal introduced quasi-categories as simplicial sets satisfying Boardman and Vogt's \emph{weak Kan condition} \cite{boardmanvogt}, as a way of modelling weak composition.
In \cite{lowen2023enriched}, we introduced enriched analogues, that is templicial objects and quasi-categories in $\mathcal{V}$.

First, in \S \ref{subsection: Templicial objects and quasi-categories in a monoidal category}, we recall these concepts. Following Leinster \cite{leinster2000homotopy}, templicial objects are certain colax monoidal functors on the finite interval category. In order to express the quasi-category property for these objects, we use necklaces \cite{baues1980geometry} \cite{dugger2011rigidification}. In particular, we make use of a combinatorial description of the category $\nec$ of necklaces from \cite{lowen2023enriched}. This description is further used in \cite{borgesmarques2023category} to show that $\nec$ is Reedy monoidal.

Next, in \S \ref{subsection: Frobenius structures}, we put forth a different approach to composition in templicial objects by endowing them with 
Frobenius structures. These structures appear naturally when comparing templicial objects to dg-categories. 

The two approaches differ in two fundamental ways. Firstly, quasi-categories in $\mathcal{V}$ are templicial objects $X$ satisfying a horn-lifting condition analogous to ordinary quasi-categories, while a Frobenius structure on $X$ is given by a composition law
$$
(X_{p}\otimes_{S} X_{q}\rightarrow X_{p+q})_{p,q\geq 0}
$$
which is compatible with the templicial structure of $X$. So the former is a condition on $X$ while the latter consists of additional data. Secondly, the lifting condition in a quasi-category in $\mathcal{V}$ allows us to fill horns in $X$, while a Frobenius structure only allows to fill necklaces in $X$. Despite these differences, both approaches are related and we will return to the precise nature of this relation in Section \ref{section: Quasi-categories versus Frobenius structures}.

\subsection{Templicial objects and quasi-categories in a monoidal category}\label{subsection: Templicial objects and quasi-categories in a monoidal category}
We briefly recall the framework of templicial objects and quasi-categories in $\mathcal{V}$ from \cite{lowen2023enriched}, formulating the definitions and basic properties.

We denote the category of simplicial sets by $\SSet$ and the category of bipointed simplicial sets by $\SSet_{*,*} = \SSet_{\partial\Delta^{1}/}$. Given integers $0\leq j\leq n$, we denote by $\Delta^{n}$ and $\Lambda^{n}_{j}$ the standard $n$-simplex and the $j$th $n$-horn in $\SSet$ respectively. We consider both as bipointed simplicial sets with distinguished points $0$ and $n$.

\begin{Def}\label{definition: temp. obj.}
A \emph{templicial object} in $\mathcal{V}$ is a pair $(X,S)$ with $S$ a set and
$$
X: \fint^{op}\rightarrow \mathcal{V}\Quiv_{S}
$$
a strongly unital, colax monoidal functor. A \emph{templicial morphism} $(X,S)\rightarrow (Y,T)$ is a pair $(\alpha,f)$ with $f: S\rightarrow T$ a map of sets and $\alpha: f_{!}X\rightarrow Y$ a monoidal natural transformation between colax monoidal functors. Templicial objects and morphisms form a category which we denote by
$$
\ts\mathcal{V}
$$
\end{Def}

Let $(X,S)$ be a templicial object. The structure of $X$ as a functor $\fint^{op}\rightarrow \mathcal{V}\Quiv_{S}$ is equivalent to a collection of $\mathcal{V}$-quivers $(X_{n})_{n\geq 0}$ as well as quiver morphisms
\begin{align*}
d_{j} = X(\delta_{j}): X_{n}\rightarrow X_{n-1}\quad \text{for all integers }0 < j < n\\
s_{i} = X(\sigma_{i}): X_{n}\rightarrow X_{n+1}\quad \text{for all integers }0\leq i\leq n
\end{align*}
called the \emph{inner face morphisms} and \emph{degeneracy morphisms} respectively, which satisfy the usual simplicial identities. The colax monoidal structure of $X$ provides it with \emph{comultiplications}, which are quiver morphisms
$$
\mu_{k,l}: X_{k+l}\rightarrow X_{k}\otimes_{S} X_{l}
$$
for all $k,l\geq 0$ satisfying coassociativity and naturality conditions. Further, $X$ has a \emph{counit} $\epsilon: X_{0}\xrightarrow{\sim} I_{S}$, which is assumed to be an isomorphism by the strong unitality.

Note that, contrary to a simplicial object, $X$ does not have any outer face morphisms $d_{0}, d_{n}: X_{n}\rightarrow X_{n-1}$. Instead, $X$ is equipped with the comultiplications $\mu_{k,l}$ which serve as a replacement for the outer faces in the (non-cartesian) monoidal context.

\begin{Prop}\label{proposition: properties of temp. objs.}
The following statements hold true:
\begin{enumerate}[1.]
\item\label{item: temp. set is simp. set} If $\mathcal{V} = \Set$, then $\ts\mathcal{V}\simeq \SSet$.
\item\label{item: free-forget adjunction} The category $\ts\mathcal{V}$ is cocomplete and there is an adjunction $\tilde{F}: \SSet\leftrightarrows \ts\mathcal{V}: \tilde{U}$, where $\tilde{F}$ is induced by post-composition with $F$.
\item\label{item: simplex of underlying simp. set} Let $(X,S)$ be a templicial object and $n\geq 0$ an integer. Then an $n$-simplex of $\tilde{U}(X)$ is given by a pair
$$
\left((a_{i})_{i=0}^{n}, (\alpha_{i,j})_{0\leq i < j\leq n}\right)
$$
with $a_{i}\in S$ and $\alpha_{i,j}\in U(X_{j-i}(a_{i},a_{j}))$ such that for all $0\leq i < k < j\leq n$:
$$
\mu_{k-i,j-k}(\alpha_{i,j}) = \alpha_{i,k}\otimes \alpha_{k,j}
$$
\end{enumerate}
\end{Prop}
\begin{proof}
See Propositions 2.7 and 2.8, and Remark 2.9 of \cite{lowen2023enriched}.
\end{proof}

The category $\nec$ of \emph{necklaces} is the full subcategory of $\SSet_{*,*}$ spanned by all sequences $\Delta^{n_{1}}\vee ...\vee \Delta^{n_{k}}$ of standard simplices which are glued at their distinguished points (where $\vee$ denotes the wedge sum) \cite{baues1980geometry}\cite{dugger2011rigidification}. In \cite[Proposition 3.4]{lowen2023enriched}, it was shown that $\nec$ admits the following combinatorial description:
\begin{itemize}
\item The objects of $\nec$ are all pairs $(T,p)$ with $p\geq 0$ an integer and $T\subseteq [p]$ a subset containing $\{0 < p\}$.
\item A morphism $(T,p)\rightarrow (U,q)$ in $\nec$ is a morphism $f: [p]\rightarrow [q]$ in $\fint$ such that $U\subseteq f(T)$.
\end{itemize}
Here, a subset $T = \{0 = t_{0} < t_{1} < ... < t_{k} = p\}$ of $[p]$ corresponds to the necklace $\Delta^{t_{1}}\vee \Delta^{t_{2}-t_{1}}\vee ...\vee \Delta^{p-t_{k-1}}$. For example, $T = \{0 < p\}$ is the single simplex $\Delta^{p}$ while $T = [p]$ is a sequence of edges. Then the wedge sum $\vee$ is given as follows:
$$
(T,p)\vee (U,q) = (T\cup (p + U), p+q)
$$
for any necklaces $(T,p)$ and $(U,q)$. This makes $\nec$ into a monoidal category with monoidal unit given by $(\{0\},0)$.

Finally, we call a necklace map $f: (T,p)\rightarrow (U,q)$ \emph{inert} if the underlying morphism $f: [p]\rightarrow [q]$ in $\fint$ is the identity, and we call $f$ \emph{active} if $f(T) = U$. Every necklace map can be uniquely decomposed as an active map followed by an inert map.\\

We will further make use of the following notation and terminology.

\begin{Def}
Given a necklace $T$, we denote its number of beads by $\ell(T)$ and call it the \emph{length} of $T$. More precisley, $\ell(T) = \vert T\vert -1$.
\end{Def}

It is clear that $\ell(T\vee U) = \ell(T) + \ell(U)$ for all necklaces $T$ and $U$, and $\ell(\{0\}) = 0$.

\begin{Def}
Let $(T,p),(U,p)\in \nec$ with $p\geq 0$. Then there exist unique necklaces $(T_{i},u_{i}-u_{i-1})$ for $i\in \{1,...,l\}$ such that
$$
T\cup U = T_{1}\vee ...\vee T_{l}
$$
where we've written $U = \{0 = u_{0} < ... < u_{l} = p\}$. More precisely,
$$
T_{i} = \{0\}\cup \{t-u_{i-1}\mid t\in T, u_{i-1}\leq t\leq u_{i}\}\cup \{u_{i}-u_{i-1}\}
$$
We call the sequence
$$
(T_{1},...,T_{l})
$$
the \emph{splitting} of $T$ over $U$.
\end{Def}

The following proposition immediately follows from the definitions.

\begin{Prop}\label{proposition: properties of splittings}
Let $p\geq 0$ and $(T,p),(U,p)\in \nec$. The following statements hold true.
\begin{enumerate}[1.]
\item\label{item: properties of splittings 1} For any necklace $(V,p)$ with $T\cup U = V\cup U$, the splitting of $T$ over $U$ is equal to the splitting of $V$ over $U$.
\item\label{item: properties of splittings 2} If $T\subseteq U$ with $(T_{1},...,T_{l})$ the splitting of $T$ over $U$ and $(U_{1},...,U_{k})$ the splitting of $U$ over $T$, then $\ell(T_{i}) = 1$ for all $i\in\{1,...,p\}$ and $U = U_{1}\vee ... \vee U_{k}$.
\end{enumerate}
\end{Prop}

Necklace maps are useful because they allow us to parametrize the inner face maps and degeneracy maps of a templicial object $(X,S)$, as well as its comultiplications. In this way we can treat both structures on the same footing. More concretely, we can construct a functor $X^{nec}_{\bullet}: \nec^{op}\rightarrow \mathcal{V}\Quiv_{S}$ as follows. For any necklace $T = \{0 = t_{0} < t_{1} < ... < t_{k} = p\}$, consider the quiver
$$
X_{T} = X_{t_{1}}\otimes_{S} X_{t_{2}-t_{1}}\otimes_{S} ...\otimes_{S} X_{p-t_{k-1}}\in \mathcal{V}\Quiv_{S}
$$
Let $f: (T,p)\rightarrow (U,q)$ be a necklace map. As $f$ can be uniquely decomposed as an active map followed by an inert map, it suffices to define $X(f): X_{U}\rightarrow X_{T}$ when $f$ is either active or inert:
\begin{itemize}
\item If $f$ is inert, then $p = q$ and $U\subseteq T$. Let $(T_{1},...,T_{l})$ denote the splitting of $T$ over $U$. Then by Proposition \ref{proposition: properties of splittings}.\ref{item: properties of splittings 2}, $T = T_{1}\vee ...\vee T_{l}$. Now set
$$
X(f): X_{U}\xrightarrow{\mu_{T_{1}}\otimes ...\otimes \mu_{T_{l}}} X_{T}
$$
where for any necklace $T' = \{0 = t_{0} < t_{1} < ... < t_{k} = p\}$ we write $\mu_{T'}$ for the comultiplication
\begin{equation}\label{equation: general comultiplication}
\mu_{T'} = \mu_{t_{1},t_{2}-t_{1},...,p-t_{k-1}}: X_{p}\rightarrow X_{T'}
\end{equation}
which is well-defined by the coassociativity of $\mu$ (setting $\mu_{\{0 < n\}} = \id_{X_{n}}$).
\item If $f$ is active, write $T = \{0 = t_{0} < t_{1} < ... < t_{k} = p\}$. There exist unique $f_{i}: [t_{i}-t_{i-1}]\rightarrow [f(t_{i}) - f(t_{i-1})]$ in $\fint$ for all $i\in \{1,...,k\}$ such that $f = f_{1} + ... + f_{k}$. Now set
$$
X(f): X_{U}\simeq X_{f(t_{1})}\otimes_{S} ...\otimes_{S} X_{q - f(t_{k-1})}\xrightarrow{X(f_{1})\otimes ...\otimes X(f_{k})} X_{T}
$$
where the isomorphism is induced by the strong unitality of $X$ and the fact that $U = f(T)$.
\end{itemize}
In particular, we can evaluate in any pair $(a,b)$ of with $a,b\in S$ to obtain a functor
\begin{equation}\label{diagram: hom-object of necklace cat. assoc. to temp. obj.}
X_{\bullet}(a,b): \nec^{op}\rightarrow \mathcal{V}
\end{equation}

\begin{Ex}\label{example: hom-object of necklace cat. assoc. to simp. set}
When $\mathcal{V} = \Set$, we have a natural isomorphism
$$
K_{\bullet}(a,b)\simeq \SSet_{*,*}(-,K_{a,b})
$$
for any bipointed simplicial set $K$ (considered as a templicial set by virtue of Proposition \ref{proposition: properties of temp. objs.}.\ref{item: temp. set is simp. set}) with distinguished points $a$ and $b$.
\end{Ex}

\begin{Def}\label{definition: necklace functor lifts inner horns}
Let $Y: \nec^{op}\rightarrow \mathcal{V}$ be a functor. We say $Y$ \emph{lifts inner horns} if for all $0 < j < n$ any lifting problem
\[\begin{tikzcd}
	{\tilde{F}(\Lambda^{n}_{j})_{\bullet}(0,n)} & Y \\
	{\tilde{F}(\Delta^{n})_{\bullet}(0,n)}
	\arrow[from=1-1, to=2-1]
	\arrow[from=1-1, to=1-2]
	\arrow[dashed, from=2-1, to=1-2]
\end{tikzcd}\]
has a solution in $\mathcal{V}^{\nec^{op}}$. We call a templicial object $(X,S)$ in $\mathcal{V}$ a \emph{quasi-category in $\mathcal{V}$} if the functor $X_{\bullet}(a,b)$ lifts inner horns for all $a,b\in S$. In this case, we will refer to the elements of $S$ as the \emph{objects} of $X$ and to elements of $U(X_{1}(a,b))$ as the \emph{morphisms} $a\rightarrow b$ in $X$.

In this paper, a particular case of interest is the case where $\mathcal{V}$ is the category $\Mod(k)$ of modules over a commutative ring $k$. In this case we will also refer to $(X,S)$ as a \emph{quasi-category in modules}.
\end{Def}

\begin{Prop}\label{proposition: properties of quasi-cats.}
The following statements hold true:
\begin{enumerate}[1.]
\item\label{item: quasi-cat. in Set is quasi-cat.} A simplicial set is a quasi-category if and only if it is a quasi-category in $\Set$ in the sense of Definition \ref{definition: necklace functor lifts inner horns}.
\item\label{item: underlying simp. set preserves quasi-cats.} If $X$ is a quasi-category in $\mathcal{V}$, then $\tilde{U}(X)$ is a quasi-category.
\item\label{item: elementwise charac. of quasi-cat.} A templicial object $(X,S)$ is a quasi-category in $\mathcal{V}$ if and only if for all integers $0 < j < n$ and $a,b\in S$, the following holds. For any collection of elements
$$
x_{k}\in U((X_{k}\otimes_{S} X_{n-k})(a,b))\quad \text{and}\quad y_{i}\in U(X_{n-1}(a,b))
$$
for $0 < k,i < n$ with $i\neq j$, which satisfy:
\begin{itemize}
\item for all $0 < i < i' < n$ with $i\neq j\neq i'$,
$$
d_{i'-1}(y_{i}) = d_{i}(y_{i'}),
$$
\item for all $0 < k < l < n$,
$$
(\id_{X_{k}}\otimes \mu_{l-k,n-l})(x_{k}) = (\mu_{k,l-k}\otimes \id_{X_{n-l}})(x_{l})
$$
\item for all $0 < k < n-1$ and $0 < i < n$ with $i\neq j$,
$$
\mu_{k,n-k-1}(y_{i}) =
\begin{cases}
(d_{i}\otimes \id_{X_{n-k-1}})(x_{k+1}) & \text{if }i\leq k\\
(\id_{X_{k}}\otimes d_{i-k})(x_{k}) & \text{if }i > k
\end{cases}
$$
\end{itemize}
there exists an element $z\in U(X_{n}(a,b))$ such that
$$
d_{i}(z) = y_{i}\quad \text{and}\quad \mu_{k,n-k}(z) = x_{k}
$$
for all $0 < k,i < n$ with $i\neq j$.
\end{enumerate}
\end{Prop}
\begin{proof}
See Propositions 5.6, 5.8 and Corollary 5.13 of \cite{lowen2023enriched}.
\end{proof}

\subsection{Frobenius structures}\label{subsection: Frobenius structures}

In this section we introduce templicial objects $(X,S)$ endowed with a Frobenius structure. This is an associative operation which multiplies necklaces in $X$ to an entire simplex. Nerves of enriched categories, such as the dg-nerve of a dg-category, often come naturally equipped with such a structure (on a simplicial set). We will discuss the dg-nerve in more detail in Section \ref{section: Enriching the differential graded nerve}, but for now let us give an example in low dimensions.

Given a small dg-category $\mathcal{C}$, the $2$-simplices of its dg-nerve $N^{dg}(\mathcal{C})$ consist of objects $A,B,C\in \Ob(\mathcal{C})$, $0$-cycles $f\in \mathcal{C}_{0}(A,B)$, $g\in \mathcal{C}_{0}(B,C)$, $h\in \mathcal{C}_{0}(A,C)$ and a homotopy $\sigma\in \mathcal{C}_{1}(A,C)$ from $g\circ f$ to $h$. It was shown in \cite[Proposition 1.3.1.10]{lurie2009higher} that $N^{dg}(\mathcal{C})$ is always a quasi-category. In fact, we even have a map
$$
N^{dg}(\mathcal{C})_{1}\times_{N^{dg}(\mathcal{C})_{0}} N^{dg}(\mathcal{C})_{1}\rightarrow N^{dg}(\mathcal{C})_{2}
$$
Indeed, defining such a map comes down to choosing, for every pair of $0$-cycles $f\in \mathcal{C}_{0}(A,B)$ and $g\in \mathcal{C}_{0}(B,C)$, an $h$ and $\sigma$ as above. But there is an obvious choice, simply set $h = g\circ f$ and $\sigma = 0$ in the chain complex $\mathcal{C}_{\bullet}(A,C)$. This procedure extends to arbitrary dimensions so that we can define maps
$$
N^{dg}(\mathcal{C})_{p}\times_{N^{dg}(\mathcal{C})_{0}} N^{dg}(\mathcal{C})_{q}\rightarrow N^{dg}(\mathcal{C})_{p+q}
$$
for all $p,q\geq 0$, which are associative and compatible with the simplicial structure.

When comparing Frobenius structures to quasi-categories in Section \ref{section: Quasi-categories versus Frobenius structures}, we will require a weakening where we drop the associativity condition.

\begin{Def}\label{definition: naF-structure}
Let $H: \mathcal{U}\rightarrow \mathcal{V}$ be a functor between monoidal categories with a colax monoidal structure $(\mu,\epsilon)$. A \emph{nonassociative Frobenius (naF) structure} on $H$ is a pair $(Z,\eta)$ with $\eta: I\rightarrow H(I)$ a morphism in $\mathcal{V}$, called the \emph{unit}, and 
$$
Z: H(-)\otimes H(-)\rightarrow H(-\otimes -)
$$
a natural transformation, called the \emph{multiplication}, such that the following diagrams commute for all $A,B,C\in\mathcal{U}$:
\begin{equation}\label{diagram: Frobenius 1}
\begin{tikzcd}
	{H(A\otimes B)\otimes H(C)} & {H(A)\otimes H(B)\otimes H(C)} \\
	{H(A\otimes B\otimes C)} & {H(A)\otimes H(B\otimes C)}
	\arrow["{\mu_{A,B\otimes C}}"', from=2-1, to=2-2]
	\arrow["{Z_{A\otimes B,C}}"', from=1-1, to=2-1]
	\arrow["{\mu_{A,B}\otimes \id}", from=1-1, to=1-2]
	\arrow["{\id\otimes Z_{B,C}}", from=1-2, to=2-2]
\end{tikzcd}
\end{equation}
\begin{equation}\label{diagram: Frobenius 2}
\begin{tikzcd}
	{H(A)\otimes H(B\otimes C)} & {H(A)\otimes H(B)\otimes H(C)} \\
	{H(A\otimes B\otimes C)} & {H(A\otimes B)\otimes H(C)}
	\arrow["{Z_{A,B\otimes C}}"', from=1-1, to=2-1]
	\arrow["{\mu_{A\otimes B,C}}"', from=2-1, to=2-2]
	\arrow["{Z_{A,B}\otimes \id}", from=1-2, to=2-2]
	\arrow["{\id\otimes \mu_{B,C}}", from=1-1, to=1-2]
\end{tikzcd}
\end{equation}
and
\begin{equation}\label{diagram: Frobenius unitality}
\begin{tikzcd}
	{H(A)\otimes H(I)} & {H(A\otimes I)} \\
	{H(A)\otimes I} & {H(A)}
	\arrow["{H(A)\otimes \eta}", from=2-1, to=1-1]
	\arrow["{\rho_{H(A)}}"', from=2-1, to=2-2]
	\arrow["\wr"', from=1-2, to=2-2]
	\arrow["{Z_{A,I}}", from=1-1, to=1-2]
	\arrow["{H(\rho_{A})}", from=1-2, to=2-2]
	\arrow["\sim", from=2-1, to=2-2]
\end{tikzcd}
\begin{tikzcd}
	{H(I)\otimes H(A)} & {H(I\otimes A)} \\
	{I\otimes H(A)} & {H(A)}
	\arrow["{\eta\otimes H(A)}", from=2-1, to=1-1]
	\arrow["{\lambda_{(A)}}"', from=2-1, to=2-2]
	\arrow["\wr"', from=1-2, to=2-2]
	\arrow["{Z_{I,A}}", from=1-1, to=1-2]
	\arrow["\sim", from=2-1, to=2-2]
	\arrow["{H(\lambda_{A})}", from=1-2, to=2-2]
\end{tikzcd}
\end{equation}
where $\lambda$ and $\rho$ denote the left and right unit isomorphisms respectively.

For the purposes of this paper, we will always assume that a naF-structure is \emph{strongly unital}. That is, $\epsilon$ is invertible and
$$
\eta = \epsilon^{-1}$$
Then the naF-structure $(Z,\eta)$ is completely determined by $Z$ and $H$.
\end{Def}

\begin{Def}\label{definition: Frobenius monoidal functor}
Let $H: \mathcal{U}\rightarrow \mathcal{V}$ be a colax monoidal functor with a naF-structure. In the special case where the multiplication $Z$ is associative, that is
\begin{equation}\label{equation: Frobenius structure assoc.}
Z_{A\otimes B,C}(Z_{A,B}\otimes \id_{C}) = Z_{A,B\otimes C}(\id_{A}\otimes Z_{B,C})
\end{equation}
for all $A,B,C\in\mathcal{U}$, we refer to the naF-structure $(Z,\eta)$ as a \emph{Frobenius structure}. Note that in particular, $H$ is then also a lax monoidal functor.

In this case, $H$ is precisely a \emph{Frobenius monoidal functor} in the sense of \cite{day2008note}, with the additional property that the unit and counit are each others inverses.
\end{Def}

\begin{Ex}\label{example: strong monoidal is Frobenius}
A strong monoidal functor is exactly a Frobenius monoidal functor whose multiplication and comultiplication are each others inverses. In particular, the Frobenius structure is uniquely determined.
\end{Ex}

Let $(X,S)$ be a templicial object. Then in particular we have a colax monoidal functor $X: \fint^{op}\rightarrow \mathcal{V}\Quiv_{S}$. So it makes sense to consider naF-structures on $X$. Suppose $X$ has a naF-structure whose multiplication we denote by $Z$. Then $Z$ consists of quiver morphisms
$$
\left(Z^{p,q}: X_{p}\otimes_{S} X_{q}\rightarrow X_{p+q}\right)_{p,q\geq 0}
$$
which are natural in $p$ and $q$. The diagrams \eqref{diagram: Frobenius 1} and \eqref{diagram: Frobenius 2} then come down to
\begin{equation}\label{equation: mu-Z compatibility}
\mu_{k,l}Z^{p,q} =
\begin{cases}
(Z^{p,k-p}\otimes \id_{X_{l}})(\id_{X_{p}}\otimes \mu_{k-p,l}) & \text{if }p\leq k\\
(\id_{X_{k}}\otimes Z^{p-k,q})(\mu_{k,p-k}\otimes \id_{X_{q}}) & \text{if }p\geq k
\end{cases}
\end{equation}
for all $k,l,p,q\geq 0$ such that $k + l = p + q$. Note that in particular $\mu_{k,l}Z^{k,l} = \id_{X_{k}\otimes X_{l}}$ for all $k,l\geq 0$ by the strong unitality.

Later in the present paper, we will pay special attention to the templicial objects with an (associative) Frobenius structure.

\begin{Def}\label{defnaFtemp}
A \emph{naF-templicial object} is a pair $(X,Z)$ with $(X,S)$ a templicial object and $Z$ a naF-structure on $X: \fint^{op}\rightarrow \mathcal{V}\Quiv_{S}$. If $Z$ is actually a Frobenius structure, then $X$ is called a \emph{Frobenius templicial object}. Recall that in this case $X$ is in particular a lax monoidal functor.

Let $(\alpha,f): (X,S)\rightarrow (Y,T)$ be a templicial map and assume that $X$ and $Y$ have Frobenius structures $Z_{X}$ and $Z_{Y}$ respectively. We call $(\alpha,f)$ a \emph{Frobenius templicial morphism} if the induced natural transformation $X\rightarrow f^{*}Y$ is monoidal with respect to the lax structures on $X$ and $f^{*}Y$. This is equivalent to requiring the following diagram to commute for all $k,l\geq 0$:
\begin{equation}\label{diagram: Frob. temp. map}
\begin{tikzcd}
	{f_{!}(X_{k+l})} & {Y_{k+l}} \\
	{f_{!}(X_{k}\otimes_{S} X_{l})} & {f_{!}(X_{k})\otimes_{T} f_{!}(X_{l})} & {Y_{k}\otimes_{T} Y_{l}}
	\arrow["{f_{!}(Z^{k,l}_{X})}", from=2-1, to=1-1]
	\arrow[from=2-1, to=2-2]
	\arrow["{\alpha_{k}\otimes_{T} \alpha_{l}}"', from=2-2, to=2-3]
	\arrow["{ \alpha_{k+l}}", from=1-1, to=1-2]
	\arrow["{Z^{k,l}_{Y}}"', from=2-3, to=1-2]
\end{tikzcd}
\end{equation}

We denote the category of Frobenius templicial objects and Frobenius templicial morphisms between them by
$$
\Fs\mathcal{V}
$$
Note that there is an obvious faithful forgetful functor $\Fs\mathcal{V}\rightarrow \ts\mathcal{V}$.
\end{Def}

\begin{Rem}
The forgetful functor $\Fs\mathcal{V}\rightarrow \ts\mathcal{V}$ has a left-adjoint, but we postpone its construction to a subsequent paper. 
\end{Rem}

\begin{Ex}\label{example: nerve has Frob structure}
let $\mathcal{C}$ be a small $\mathcal{V}$-enriched category. Then its templicial nerve $N_{\mathcal{V}}(\mathcal{C})$ (see \cite[Definition 2.11]{lowen2023enriched}) carries a Frobenius structure. Indeed, this is a direct consequence of Example \ref{example: strong monoidal is Frobenius} since the underlying colax monoidal functor $\fint^{op}\rightarrow \mathcal{V}\Quiv_{\Ob(\mathcal{V})}$ is strong monoidal.
\end{Ex}

\begin{Rem}
The templicial homotopy coherent nerve $N^{hc}_{\mathcal{V}}: \mathcal{V}\Cat_{\Delta}\rightarrow \ts\mathcal{V}$ introduced in \cite{lowen2023enriched} also carries a natural Frobenius structure. We will come back to this in a subsequent paper \cite{mertens2023nerves}.
\end{Rem}

We end the section with some formulas that describe the interaction between a nonassociative Frobenius structure and the comultiplication maps of a templicial object, which we will use later on.

\begin{Not}
Let $(X,S)$ be a templicial object with naF-structure $Z$. Given a necklace $T = \{0 = t_{0} < t_{1} < ... < t_{k} = p\}$, recall the comultiplication morphism $\mu_{T}: X_{p}\rightarrow X_{T}$ \eqref{equation: general comultiplication}. We'd like to similarly define a quiver morphism
$$
Z^{T}: X_{T}\rightarrow X_{p}
$$
However, since $Z$ is not assumed to be associative, this will depend on how we compose the two-variable morphisms $Z^{k,l}$. Nevertheless, making an arbitrary choice, we can define
$$
Z^{p_{1},...,p_{k}} = Z^{p_{1},p_{2}+...+p_{k}}(\id_{X_{p_{1}}}\otimes Z^{p_{2},...,p_{k}})
$$
inductively on $k\geq 2$, for all $p_{1},...,p_{k}\geq 0$, and subsequently set
$$
Z^{T} =
\begin{cases}
\epsilon^{-1} & \text{if }k = 0\\
\id_{X_{p}} & \text{if }k = 1\\
Z^{t_{1},t_{2}-t_{1},...,p-t_{k-1}} & \text{if }k\geq 2\\
\end{cases}
$$
\end{Not}

\begin{Rem}
Let $(X,S)$ be a templicial object with naF-structure $Z$. Consider a necklace $T = \{0 = t_{0} < ... < t_{k} = p\}$. It follows from the naturality of $Z$ that for all $i,j\in [p]\setminus T$:
\begin{align*}
d_{j}Z^{T} = Z^{\delta^{-1}_{j}(T)}(\id\otimes ...\otimes \id\otimes d_{j-i_{m-1}}\otimes \id\otimes ...\otimes \id)\\
s_{i}Z^{T} = Z^{\sigma^{-1}_{i}(T)}(\id\otimes ...\otimes \id\otimes s_{i-i_{m-1}}\otimes \id\otimes ...\otimes \id)
\end{align*}
where $m\in \{1,...,k\}$ is minimal such that $i < i_{m}$ or $j < i_{m}$ respectively. On the other hand, if $i\in T$, then
$$
s_{i}Z^{T} = Z^{\sigma^{-1}_{i}(T)}(\id\otimes ...\otimes \id\otimes s_{0}\epsilon^{-1}\otimes \id\otimes ...\otimes \id)
$$
However, if $0 < j < n$ and $j\in T$ the naturality of $Z$ doesn't supply us with a formula to pass the face map $d_{j}$ through $Z$.
\end{Rem}

\begin{Prop}\label{proposition: higher mu-Z compatibility}
Let $(X,S)$ be a templicial object with naF-structure $Z$. Let $p\geq 0$ and $(T,p),(U,p)\in \nec$. Then
\begin{equation}\label{equation: higher mu-Z compatibility}
\mu_{T}Z^{U} = (Z^{U_{1}}\otimes ...\otimes Z^{U_{k}})(\mu_{T_{1}}\otimes ...\otimes \mu_{T_{l}})
\end{equation}
where $(U_{1},...,U_{k})$ is the splitting of $U$ over $T$ and $(T_{1},...,T_{l})$ is the splitting of $T$ over $U$.
\end{Prop}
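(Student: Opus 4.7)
My approach is induction on $\ell(I) + \ell(J)$, with \eqref{equation: mu-Z compatibility} as the essential base case. I would first dispose of the degenerate cases: if $\ell(I) = 1$ then $\mu_I = \id_{X_n}$, the splitting of $I$ over $J$ has all pieces $I_k = \{j_{k-1}, j_k\}$ of length $1$ (so each $\mu_{I_k} = \id$), and the splitting of $J$ over $I$ is the singleton $(J)$; both sides collapse to $Z^J$. The case $\ell(J) = 1$ is symmetric, and $\ell(I) = \ell(J) = 2$ is exactly \eqref{equation: mu-Z compatibility}.

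For the inductive step I may assume $\ell(J) \geq 3$, the case $\ell(I) \geq 3$ being symmetric with coassociativity of $\mu$ replacing the defining right-nested decomposition of $Z^J$. Writing $J = \{0 < j_1 < \ldots < j_l = n\}$ and $J' = J^{\geq j_1}$, the definition of $Z^J$ gives
\begin{displaymath}
\mu_I Z^J \;=\; \bigl(\mu_I\, Z^{\{0, j_1, n\}}\bigr) \circ \bigl(\id_{X_{j_1}} \otimes Z^{J'}\bigr).
\end{displaymath}
The inductive hypothesis applied to $\mu_I Z^{\{0, j_1, n\}}$ (whose total length is $\ell(I) + 2$) rewrites it as $(Z^{K_1} \otimes \cdots \otimes Z^{K_{\ell(I)}})(\mu_{M_1} \otimes \mu_{M_2})$, where $(K_r)_{r=1}^{\ell(I)}$ is the splitting of $\{0, j_1, n\}$ over $I$ and $(M_1, M_2) = (I^{\leq j_1}, I^{\geq j_1})$. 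Bifunctoriality of $\otimes$ then isolates the composite $\mu_{M_2} \circ Z^{J'}$, to which the inductive hypothesis applies once more (since $\ell(M_2) \leq \ell(I)$ and $\ell(J') = l-1$).

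The remaining work is combinatorial bookkeeping, which I expect to be the main obstacle: one must verify that the two-stage splittings produced this way reassemble into the single-stage splittings $(I_k)_{k=1}^{\ell(J)}$ and $(J_r)_{r=1}^{\ell(I)}$ appearing in the statement. For the $\mu$-factors, this reduces to $I_1 = M_1$ together with $I_{k+1}$ being the $k$-th piece of the splitting of $M_2$ over $J'$ for $1 \leq k \leq l-1$. For the $Z$-factors, all but one of the $K_r$ have length $1$ (only the unique block of $I$ straddling $j_1$ is nontrivially refined by $\{0, j_1, n\}$), so they either pass through directly as the matching $J_r$, or, for the distinguished block containing $j_1$, merge with the leading $Z^{L_1}$ from the second application to recover $Z^{J_s}$. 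Both identifications unwind from Definition \ref{definition: splitting of a partition} via the elementary identity $(X \cap [0, s])^{\geq t} = X^{\geq t} \cap [t, s]$, which expresses the associativity of successive refinements and ensures that splitting $I$ first over $\{0, j_1, n\}$ and then refining $M_2$ by $J'$ agrees with splitting $I$ over $J$ in one shot.
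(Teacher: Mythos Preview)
Your argument is correct, and the bookkeeping you describe does check out: in particular the key identity $Z^{K_s}\circ(\id\otimes Z^{L_1})=Z^{J_s}$ follows immediately from the right-nested definition of $Z^{p_1,\dots,p_m}$, and your claim that $L_{r-s+1}=J_r$ and $N_r=I_{r+1}$ unwinds directly from Definition~\ref{definition: splitting of a partition}. The symmetric case $\ell(I)\geq 3$ also goes through as you indicate, with coassociativity of $\mu$ supplying the needed recombination of the $\mu$-factors in place of the right-nesting of $Z$.

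Your route differs from the paper's in the inductive structure. You peel off the first element of $J$ alone and invoke the inductive hypothesis \emph{twice}: first on $\mu_I Z^{\{0,j_1,n\}}$ and then on $\mu_{M_2}Z^{J'}$. The paper instead peels the first nonzero element from \emph{both} $I$ and $J$ simultaneously, writing
\[
\mu_I Z^J=(\id\otimes\mu_{I^{\geq i}})\,\mu_{i,n-i}\,Z^{j,n-j}\,(\id\otimes Z^{J^{\geq j}}),
\]
applies the base relation~\eqref{equation: mu-Z compatibility} directly to the middle $\mu_{i,n-i}Z^{j,n-j}$, and then makes a \emph{single} inductive call on $\mu_{I^{\geq i}}Z^{J^{\geq i}}$. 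This buys a shorter proof with less combinatorial reassembly: since both sides are reduced in one step, the splitting identities needed are almost immediate ($J_1=\{0<i\}$ when $i\leq j$, etc.), whereas your two-stage induction produces intermediate splittings that must be merged back together. Your approach has the minor advantage of being more modular---the $I$ and $J$ reductions are decoupled---but at the cost of the bookkeeping you flag.
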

\begin{proof}
We use induction on $k = \ell(T)$ and $l = \ell(U)$. If either $k = 0$ or $l = 0$, then both are zero and (\ref{equation: higher mu-Z compatibility}) is trivially true. For $k = 1$, both sides of (\ref{equation: higher mu-Z compatibility}) reduce to $Z^{U}$. Similarly, if $l = 1$ both sides reduce to $\mu_{T}$.

Assume further that $k,l\geq 2$. Let $t\in T$ and $u\in U$ be minimal such that $0 < t$ and $0 < u$. We can write $T = \{0 < t\}\vee T'$ and $U = \{0 < u\}\vee U'$ for some unique necklaces $(T',p-t)$ and $(U',p-u)$. Then:
$$
\mu_{T}Z^{U} = (\id_{X_{t}}\otimes \mu_{T'})\mu_{t,p-t}Z^{u,p-u}(\id_{X_{u}}\otimes Z^{U'})
$$
If $t\leq u$, then $\mu_{t,p-t}Z^{u,p-u} = (\id_{X_{t}}\otimes Z^{u-t,p-u})(\mu_{t,u-t}\otimes \id_{X_{p-u}})$ by (\ref{equation: mu-Z compatibility}), and we can write $T_{1} = \{0 < t\}\vee T'_{1}$ for some unique $(T'_{1},u-t)$. So, by the induction hypothesis, we have
\begin{align*}
\mu_{T}Z^{U} &= (\id_{X_{t}}\otimes \mu_{T'}Z^{u-t,p-u})(\mu_{t,u-t}\otimes Z^{U'})\\
&= (\id_{X_{t}}\otimes \mu_{T'}Z^{\{0 < u-t\}\vee U'})(\mu_{t,u-t}\otimes \id_{X_{U'}})\\
&= (\id_{X_{t}}\otimes Z^{U_{2}}\otimes ...\otimes Z^{U_{k}})((\id_{X_{t}}\otimes \mu_{T'_{1}})\mu_{t,u-t}\otimes \mu_{T_{2}}\otimes ...\otimes \mu_{T_{l}})\\
&= (Z^{U_{1}}\otimes Z^{U_{2}}\otimes ...\otimes Z^{U_{k}})(\mu_{T_{1}}\otimes \mu_{T_{2}}\otimes ...\otimes \mu_{T_{l}})
\end{align*}
where we used that $U_{1} = \{0 < t\}$ since $t \leq u$. A similar argument shows the case for $t\geq u$.
\end{proof}

\begin{Cor}\label{corollary: higher mu-Z compatibility}
Let $(X,S)$ be a templicial object with naF-structure $Z$. Let $p\geq 0$ and $(T,p),(U,p)\in\nec$. The following statements are true.
\begin{enumerate}[1.]
\item If $T\subseteq U$, and $(U_{1},...,U_{k})$ is the splitting of $U$ over $T$, then
$$
\mu_{T}Z^{U} = Z^{U_{1}}\otimes ... \otimes Z^{U_{k}}
$$
\item If $U\subseteq T$, and $(T_{1},...,T_{l})$ is the splitting of $T$ over $U$, then
$$
\mu_{T}Z^{U} = \mu_{T_{1}}\otimes ... \otimes \mu_{T_{l}}
$$
\item We have $\mu_{T}Z^{U}\mu_{U} = \mu_{T}Z^{T\cup U}\mu_{T\cup U}$.
\item If $Z$ is associative, then we have $Z^{U}\mu_{U}Z^{T} = Z^{T\cup U}\mu_{T\cup U}Z^{T}$.
\end{enumerate}
\begin{proof}
Statements $1$ and $2$ follow from Propositions \ref{proposition: properties of splittings}.\ref{item: properties of splittings 2} and \ref{proposition: higher mu-Z compatibility}.

To prove $3$, consider the splittings $(U_{1},...,U_{k})$ and $(T_{1},...,T_{l})$ of $U$ over $T$ and $T$ over $U$ respectively. By Proposition \ref{proposition: properties of splittings}.\ref{item: properties of splittings 1}, $(U_{1},...,U_{k})$ is also the splitting of $T\cup U$ over $T$. Thus as $T\subseteq T\cup U$, it follows from $1$ that
\begin{align*}
\mu_{T}Z^{T\cup U}\mu_{T\cup U} &= (Z^{U_{1}}\otimes ... \otimes Z^{U_{k}})\mu_{T\cup U} = (Z^{U_{1}}\otimes ... \otimes Z^{U_{k}})\mu_{T_{1}\vee ... \vee T_{l}}\\
&= (Z^{U_{1}}\otimes ... \otimes Z^{U_{k}})(\mu_{T_{1}}\otimes ...\otimes \mu_{T_{l}})\mu_{U} = \mu_{T}Z^{U}\mu_{U}
\end{align*}
where we used the coassociativity of $\mu$. Finally, $4$ follows similarly by using $2$.
\end{proof}
\end{Cor}

\section{Enriching the differential graded nerve}\label{section: Enriching the differential graded nerve}

Let $k$ be a fixed commutative unital ring. In this section, we consider the case where $\mathcal{V}$ is the category $\Mod(k)$ of $k$-modules with the tensor product over $k$. It is worth noting that all the proceeding proofs are still valid when $\mathcal{V}$ is a general abelian category (in addition to the conditions imposed in \S\ref{subsection: Notations and conventions}). Nevertheless, we will restrict to $\mathcal{V} = \Mod(k)$ for simplicity.

We denote by $\Ch(k)$ the category of chain complexes over $k$, and by $k\Cat_{dg}$ the category of small (homologically graded) dg-categories over $k$. Our starting point is the dg-nerve functor
$$
N^{dg}: k\Cat_{dg}\rightarrow \SSet
$$
from \cite{block2009Riemann}, \cite[Construction 1.3.1.6]{lurie2016higher}. This section is devoted to the construction of an enriched version of the dg-nerve which we call the \emph{templicial dg-nerve}:
$$
N^{dg}_{k}: k\Cat_{dg}\rightarrow \ts\Mod(k)
$$
(see Definition \ref{definition: templicial dg-nerve}), which recovers $N^{dg}$ after composition with $\tilde{U}$ (Corollary \ref{corollary: underlying simp. set of templicial dg-nerve}).

This will be done in two main steps. First, in \S \ref{subsection: A Dold-Kan correspondence for augmented simplicial modules}, we prove a version of the Dold-Kan correspondence relating chain complexes to \emph{augmented} simplicial modules, which is compatible with the monoidal structures (Proposition \ref{proposition: augmented Dold-Kan correspondence}). 
Applying this locally to a given dg-category, we thus obtain a category $\mathcal{C}$ enriched in augmented simplicial modules.

Next, in \S\ref{subsection: Frobenius structures and augmented simplicial categories}, we apply a construction to the enriched category $\mathcal{C}$ reminiscent of the bar construction, which takes values in templicial modules. Our main result is a resulting equivalence of categories between positively graded dg-categories on the one hand and Frobenius templicial modules on the other hand (Theorem \ref{theorem: augmented simp. cat. and Frob. temp. obj. equiv.}). 

\subsection{A Dold-Kan correspondence for augmented simplicial modules}\label{subsection: A Dold-Kan correspondence for augmented simplicial modules}

Let us denote by $\Ch(k)_{\geq 0}$ the category of chain complexes concentrated in non-negative degree. Similarly, we denote the category of small non-negatively graded dg-cate\-gories by $k\Cat_{dg,\geq 0}$.

The classical Dold-Kan correspondence \cite{dold1958homology}\cite{kan1958functors} states that the normalized chain functor is an equivalence of categories:
$$
N_{\bullet}: S\Mod(k)\xrightarrow{\sim} \Ch(k)_{\geq 0}
$$
Consider the category of \emph{augmented simplicial ($k$-)modules}:
$$
S^{+}\Mod(k) = \Fun(\asimp^{op},\Mod(k))
$$
We can also define a normalized chain functor for augmented simplicial modules by making use of a shift. Consider the following isomorphism of categories:
$$
s: \Ch_{\geq 0}(k)\xrightarrow{\simeq} \Ch_{> 0}(k)
$$
with $sC_{n} = C_{n-1}$ and $\partial^{sC}_{n} = \partial^{C}_{n-1}$ for all $n > 0$.

\begin{Con}\label{construction: augmented normalized chain functor}
We construct a functor
$$
N^{+}_{\bullet}: S^{+}\Mod(k)\rightarrow \Ch_{\geq 0}(k)
$$
Given a augmented simplicial $k$-module $A$, let $A_{\geq 0}$ denote its restriction to $\simp^{op}$. Note that this forgets the module $A_{-1}$ and the face map $d_{0}: A_{0}\rightarrow A_{-1}$. Then define
$$
N^{+}_{\bullet}(A) = (sN_{\bullet}(A_{\geq 0})\xrightarrow{d_{0}} A_{-1})
$$
Clearly the assignment $A\mapsto N^{+}_{\bullet}(A)$ extends to a functor $S^{+}\Mod(k)\rightarrow \Ch_{\geq 0}(k)$.
\end{Con}

\begin{Rem}
Note that for all $n\geq 0$ we have
$$
N^{+}_{n}(A)\simeq \frac{A_{n-1}}{\sum_{i=0}^{n-2}s_{i}(A_{n-2})}
$$
So in low degrees, we have $N^{+}_{0}(A) = A_{-1}$, $N^{+}_{1}(A) = A_{0}$ and $N^{+}_{2}(A) = A_{1}/s_{0}(A_{0})$. The differential is given by, for all $n\geq 0$:
$$
\partial_{n+1} = \sum_{i=0}^{n}(-1)^{i}\overline{d}_{i}: N^{+}_{n+1}(A)\rightarrow N^{+}_{n}(A)
$$
where $\overline{d}_{i}$ is induced by the $i$th face map $d_{i}: A_{n}\rightarrow A_{n-1}$ of $A$.
\end{Rem}

\begin{Prop}\label{proposition: augmented Dold-Kan correspondence}
The functor $N^{+}: S^{+}\Mod(k)\rightarrow \Ch(k)$ of Construction \ref{construction: augmented normalized chain functor} has a right-adjoint $\Gamma^{+}: \Ch(k)\rightarrow S^{+}\Mod(k)$ which is given by
$$
\Gamma^{+}(C_{\bullet}) = \Ch(k)\left(N^{+}_{\bullet}(\Delta^{(-)};k), C_{\bullet}\right): \asimp^{op}\rightarrow \Mod(k)
$$
for all chain complexes $C_{\bullet}$. Moreover, the restriction
\[\begin{tikzcd}
	{S^{+}\Mod(k)} & {\Ch_{\geq 0}(k)}
	\arrow["{N^{+}_{\bullet}}", shift left=2, from=1-1, to=1-2]
	\arrow["{\Gamma^{+}}", shift left=2, from=1-2, to=1-1]
	\arrow["\sim"{description}, draw=none, from=1-1, to=1-2]
\end{tikzcd}\]
is an adjoint equivalence of categories.
\end{Prop}
\begin{proof}
Because $s$, $(-)_{\geq 0}$ and $N$ all clearly preserve colimits, it follows that $N^{+}$ preserves colimits as well. Thus the first statement follows from a general nerve construction applied to the the functor $N^{+}_{\bullet}(\Delta^{(-)};k): \asimp\rightarrow \Ch(k)$.

It remains to show that $N^{+}_{\bullet}: S^{+}\Mod(k)\rightarrow \Ch_{\geq 0}(k)$ is an equivalence, but this quickly follows from the definition of $N^{+}_{\bullet}$ and the fact that $N_{\bullet}$ is an equivalence.
\end{proof}

\begin{Rem}
Given a chain complex $C_{\bullet}$, let us make the augmented simplicial $k$-module $\Gamma^{+}(C_{\bullet})$ a little more explicit. For each $n\geq -1$, the $k$-module $\Gamma^{+}(C_{\bullet})_{n}$ consists of all collections
$$
(a_{I})_{I\subseteq [n]}\in \bigoplus_{I\subseteq [n]}C_{\vert I\vert}
$$
that satisfy, for all $I = \{i_{1} < ... < i_{m}\}\subseteq [n]$:
$$
\partial(a_{I}) = \sum_{j=1}^{m}(-1)^{j-1}a_{I\setminus\{i_{j}\}}
$$
For $h: [m]\rightarrow [n]$ in $\asimp$, the map
$$
\Gamma^{+}(C_{\bullet})(h): \Gamma^{+}(C_{\bullet})_{n}\rightarrow \Gamma^{+}(C_{\bullet})_{m}: (a_{I})_{I\subseteq [n]}\mapsto (b_{J})_{J\subseteq [m]}
$$
is given by $b_{J} = a_{h(J)}$ if $h\vert_{J}$ is injective and $b_{J} = 0$ otherwise.
\end{Rem}

As both $\Mod(k)$ and $\asimp$ are monoidal categories, we can endow $S^{+}\Mod(k)$ with the monoidal structure given by Day convolution (see \cite{day1970closed}). This is also known as the \emph{join} of augmented simplicial objects. We denote the resulting monoidal closed category by $(S^{+}\Mod(k),\star,F(\Delta^{-1}))$.

Explicitely, the join of two augmented simplicial modules $A$ and $B$ is given by
$$
(A\star B)_{n} = \bigoplus_{\substack{k,l\geq -1\\ k+l+1 = n}}A_{k}\otimes B_{l}
$$
for all $n\geq -1$. Given $f: [m]\rightarrow [n]$, and $k,l\geq -1$ such that $k + l + 1 = n$, there exist unique $f_{1}: [p]\rightarrow [k]$ and $f_{2}: [q]\rightarrow [l]$ with $p + q + 1 = m$ and $f_{1}\star f_{2} = f$. With these notations, we have for all $a\in A_{k}$ and $b\in B_{l}$:
$$
(A\star B)(f)(a\otimes b) = A(f_{1})(a)\otimes B(f_{2})(b)
$$
The monoidal unit is given by $F(\Delta^{-1})$, that is $F(\Delta^{-1})_{-1} = k$ and $F(\Delta^{-1})_{n} = 0$ for all $n\geq 0$.

\begin{Prop}\label{proposition: monoidal augmented Dold-Kan correspondence}
The adjunction $N^{+}_{\bullet}: S^{+}\Mod(k)\leftrightarrows \Ch(k): \Gamma^{+}$ is monoidal. Further, the restriction $N^{+}_{\bullet}: S^{+}\Mod(k)\leftrightarrows \Ch_{\geq 0}(k): \Gamma^{+}$ is a monoidal equivalence.
\end{Prop}
\begin{proof}
For both statements it suffices to show that $N^{+}_{\bullet}$ has the structure of a strong monoidal functor. Let $A$ and $B$ be augmented simplicial modules. For all $n > 0$ and $i\in [n-2]$, the degeneracy map $s_{i}: (A\star B)_{n-2}\rightarrow (A\star B)_{n-1}$ is given by
$$
s_{i}\vert_{A_{k}\otimes B_{n-k-3}} =
\begin{cases}
s^{A}_{i}\otimes \id_{B_{n-k-3}} & \text{if }i\leq k\\
\id_{A_{k}}\otimes s^{B}_{i-k-1} & \text{if }i > k
\end{cases}
$$
It follows that the submodule $\sum_{i=0}^{n-2}s_{i}((A\star B)_{n-2})$ of $(A\otimes B)_{n-1}$ is equal to
$$
\bigoplus_{\substack{p,q\geq 0\\ p + q = n}}\left(\sum_{i=0}^{p-2}(s^{A}_{i}\otimes \id_{B_{q-1}})(A_{p-2}\otimes B_{q-1}) + \sum_{i=0}^{q-2}(\id_{A_{p-1}}\otimes s^{B}_{i})(A_{p-1}\otimes B_{q-2})\right)
$$
Consequently, we have an isomorphism
$$
N^{+}_{n}(A\star B)\simeq \bigoplus_{\substack{p,q\geq 0\\ p + q = n}}(N^{+}_{p}(A)\otimes N^{+}_{q}(B)) = (N^{+}(A)\otimes N^{+}(B))_{n}
$$
Moreover this isomorphism is a chain map. This follows from the fact that for all $n\geq 0$ and $i\in [n]$, the face map $d_{i}: (A\star B)_{n}\rightarrow (A\star B)_{n-1}$ is given by,
$$
d_{i}\vert_{A_{k}\otimes B_{l}} =
\begin{cases}
d^{A}_{i}\otimes \id_{B_{l}} & \text{if }i\leq k\\
\id_{A_{k}}\otimes d^{B}_{i-k-1} & \text{if }i > k
\end{cases}
$$
So, we get an isomorphism
$$
\mu_{A,B}: N^{+}_{\bullet}(A\star B)\xrightarrow{\sim} N^{+}_{\bullet}(A)\otimes N^{+}_{\bullet}(B)
$$
It is a direct verification that this isomorphism is natural in $A$ and $B$, and coassociative. Finally, we clearly have an isomorphism $\epsilon: N^{+}_{\bullet}(\Delta^{-1};k)\xrightarrow{\sim} k[0]$ and it follows easily that $\mu$ is counital with respect to $\epsilon$.
\end{proof}

\subsection{Frobenius structures and $S^{+}\Mod(k)$-categories}\label{subsection: Frobenius structures and augmented simplicial categories}

We now consider categories enriched in the monoidal category $(S^{+}\Mod(k),\star,F(\Delta^{-1}))$ of the previous subsection, and refer to them as \emph{$S^{+}\Mod(k)$-categories}. Further we denote the category of small $S^{+}\Mod(k)$-categories by
$$
k\Cat_{\Delta_{+}} = S^{+}\Mod(k)\text{-}\Cat
$$
Our main result in this section is the following theorem.

\begin{Thm}\label{theorem: augmented simp. cat. and Frob. temp. obj. equiv.}

There is an adjoint equivalence of categories
\[\begin{tikzcd}
	{k\Cat_{\Delta_{+}}} & {\Fs\Mod(k)}
	\arrow[""{name=0, anchor=center, inner sep=0}, "{\mathcal{T}}", shift left=2, from=1-1, to=1-2]
	\arrow[""{name=1, anchor=center, inner sep=0}, "{\mathcal{K}}", shift left=2, from=1-2, to=1-1]
	\arrow["\sim"{description}, Rightarrow, draw=none, from=0, to=1]
\end{tikzcd}\]
\end{Thm}

The proof of Theorem \ref{theorem: augmented simp. cat. and Frob. temp. obj. equiv.} will be completed in Appendix \ref{section: Frobenius structures and augmented simplicial categories}, where we carry out some straightforward verifications. For now we only describe both functors on the level of objects (see Constructions \ref{construction: temp. obj. assoc. to augmented simp. cat.} and \ref{construction: augmented simp. cat. assoc. to Frob. temp. obj.}). The main conceptual reason why they are inverse equivalences is given in Lemma \ref{lemma: properties of retraction of kernel embedding} and Proposition \ref{proposition: unit and counit of tensor algebra joint kernel adjunction} below.\\

To define the functor $\mathcal{T}$, it will be more convenient to extend the augmented simplex category $\asimp$ to the equivalent category $\Lin$ of finite linearly ordered sets and order morphisms between them.

Given finite linearly ordered sets $I$ and $J$, their disjoint union $I\sqcup J$ carries a unique linear order which restricts to those of $I$ and $J$ and such that $i < j$ for all $i\in I$ and $j\in J$. This clearly defines a monoidal product $-\sqcup -: \Lin\times \Lin\rightarrow \Lin$ with monoidal unit given by the empty poset $\emptyset$. In fact, we have a canonical equivalence of monoidal categories
\begin{equation}\label{equation: linearly ordered sets and augmented simplex category equiv.}
(\Lin,\sqcup,\emptyset)\simeq (\asimp,\star,[-1])
\end{equation}
which identifies a linearly ordered set $J = \{j_{0} < ... < j_{k}\}$ with the ordinal $[k]$.

Further, if $I_{1},I_{2}\subseteq J$ are subsets of a finite linearly ordered set $J$, we'll also write $I_{1}\sqcup I_{2}$ for the union $I_{1}\cup I_{2}$ to indicate that $i < j$ for all $i\in I_{1}$ and $j\in I_{2}$. Note that up to isomorphism this coincides with the above definition.

\begin{Rem}\label{remark: extension of augmented simp. obj.}
Under the monoidal equivalence \eqref{equation: linearly ordered sets and augmented simplex category equiv.}, we may identify augmented simplicial modules with functors $\Lin^{op}\rightarrow \Mod(k)$. Further, we may identify $S^{+}\Mod(k)$-categories with categories enriched in $\Fun(\Lin^{op},\Mod(k))$ (equipped with the Day convolution), and will implicitly do so in what follows.

Let $\mathcal{C}\in k\Cat_{\Delta_{+}}$ and set $S = \Ob(\mathcal{C})$. Via the monoidal equivalence \eqref{equation: linearly ordered sets and augmented simplex category equiv.}, $\mathcal{C}$ comes equipped with a quiver $\mathcal{C}_{J}\in k\Quiv_{S}$ for every finite linearly ordered set $J$, and a quiver map $\mathcal{C}_{J}\rightarrow \mathcal{C}_{I}$ for every morphism $f: I\rightarrow J$ in $\Lin$.

Moreover, the composition law of $\mathcal{C}$ is determined by quiver maps
$$
m_{I,J}: \mathcal{C}_{I}\otimes_{S} \mathcal{C}_{J}\rightarrow \mathcal{C}_{I\sqcup J}
$$
which are natural in $I$ and $J$. Its identities are determined by a quiver map $k_{S}\rightarrow \mathcal{C}_{\emptyset}$ (where $k_{S}$ denotes the monoidal unit of $k\Quiv_{S}$).

Because of the associativity of the composition in $\mathcal{C}$, we also have an induced quiver morphism, for all $p\geq 2$:
$$
m_{I_{1},...,I_{p}}: \mathcal{C}_{I_{1}}\otimes_{S} ...\otimes_{S} \mathcal{C}_{I_{p}}\rightarrow \mathcal{C}_{I_{1}\sqcup ...\sqcup I_{p}}
$$
for all finite linearly ordered sets $I_{1},...,I_{p}$. Further, we write $m_{I_{1},...,I_{p}} = u$ if $p = 0$ and $m_{I_{1},...,I_{p}} = \id_{\mathcal{C}_{I_{1}}}$ if $p = 1$.
\end{Rem}

\begin{Not}\label{notation: complement of a necklace}
Given $n > 0$, a subset $I\subseteq \{1,...,n\}$ and $k\in \{1,...,n\}\setminus I$, we write
$$
I_{< k} = \{i\in I\mid i < k\}\quad \text{and}\quad I_{> k} = \{i\in I\mid i > k\}
$$
Consider a necklace $T = \{0 = t_{0} < t_{1} < ... < t_{k} = p\}$. For all $i\in \{1,...,k\}$, we set
$$
T^{c}_{i} = (([p]\setminus T)_{< t_{i}})_{> t_{i-1}} = \{t_{i-1}+1 < t_{i-1}+2 < ... < t_{i}-1\}
$$
considered as an object of $\Lin$. Note that we have
$$
T^{c}_{1}\sqcup ...\sqcup T^{c}_{k} = [p]\setminus T
$$
\end{Not}

\begin{Con}\label{construction: temp. obj. assoc. to augmented simp. cat.}
Let $\mathcal{C}$ be a $S^{+}\Mod(k)$-category with object set $S = \Ob(\mathcal{C})$. We construct a Frobenius templicial $k$-module $(\mathcal{T}(\mathcal{C}),S)$ as follows.
\begin{itemize}
\item Given an $n\geq 0$, consider the quiver
$$
\mathcal{T}(\mathcal{C})_{n} = \bigoplus_{(T,n)\in \nec}\mathcal{C}^{\otimes}_{T}
$$
where, for every necklace $T = \{0 = t_{0} < t_{1} < ... < t_{k} = n\}$:
\begin{align*}
\mathcal{C}^{\otimes}_{T} &= \mathcal{C}_{T^{c}_{1}}\otimes_{S} \mathcal{C}_{T^{c}_{2}}\otimes_{S} ...\otimes_{S} \mathcal{C}_{T^{c}_{k}}\\
&= \mathcal{C}_{t_{1}-2}\otimes_{S} \mathcal{C}_{t_{2}-t_{1}-2}\otimes_{S} ... \otimes_{S} \mathcal{C}_{n-t_{k-1}-2}\quad \in k\Quiv_{S}
\end{align*}
\item Given a morphism $f: [m]\rightarrow [n]$ in $\fint$, we define a quiver morphism
$$
\mathcal{T}(\mathcal{C})(f): \mathcal{T}(\mathcal{C})_{n}\rightarrow \mathcal{T}(\mathcal{C})_{m}
$$
as follows. For $(T,n)\in \nec$, let us write $U = f^{-1}(T)$ and further denote $U = \{0 = u_{0} < u_{1} < ... < u_{l} = m\}$. Then for every $j\in \{0,...,l\}$, $f(u_{j}) = t_{p_{j}}$ for some $p_{j}\in \{0,...,k\}$. Thus we can restrict $f$ to
$$
f\vert_{U^{c}_{j}}: U^{c}_{j}\rightarrow T^{c}_{p_{j-1}+1}\sqcup ...\sqcup T^{c}_{p_{j}}
$$
in $\Lin$ for all $j\in \{1,...,l\}$. Now define a quiver map $\mathcal{C}^{\otimes}(f)_{T}: \mathcal{C}^{\otimes}_{T}\rightarrow \mathcal{C}^{\otimes}_{U}$ as
$$
\mathcal{C}^{\otimes}(f)_{T} = \mathcal{C}(f\vert_{U^{c}_{1}})m_{T^{c}_{1},...,T^{c}_{p_{1}}}\otimes_{S} ... \otimes_{S} \mathcal{C}(f\vert_{U^{c}_{l}})m_{T^{c}_{p_{l-1}+1},...,T^{c}_{k}}
$$
These combine to define a map $\mathcal{T}(\mathcal{C})(f): \mathcal{T}(\mathcal{C})_{n}\rightarrow \mathcal{T}(\mathcal{C})_{m}$.
\item The comultiplications and Frobenius structure of $\mathcal{T}(\mathcal{C})$ are given by the following canonical projection and coprojection maps, for all $k,l\geq 0$:
\begin{align*}
\mu_{k,l}: \mathcal{T}(\mathcal{C})_{k+l} = \bigoplus_{(T,k+l)\in \nec}\mathcal{C}^{\otimes}_{T}\rightarrow \bigoplus_{\substack{(T,k+l)\in \nec\\ k\in T}}\mathcal{C}^{\otimes}_{T}\simeq \mathcal{T}(\mathcal{C})_{k}\otimes_{S} \mathcal{T}(\mathcal{C})_{l}\\
Z^{k,l}: \mathcal{T}(\mathcal{C})_{k}\otimes_{S} \mathcal{T}(\mathcal{C})_{l}\simeq \bigoplus_{\substack{(T,k+l)\in \nec\\ k\in T}}\mathcal{C}^{\otimes}_{T}\rightarrow \bigoplus_{(T,k+l)\in \nec}\mathcal{C}^{\otimes}_{T} = \mathcal{T}(\mathcal{C})_{k+l}
\end{align*}
\end{itemize}
\end{Con}

\begin{Rem}
Note that the functor $\mathcal{T}$ is a bar type construction, where both domain and codomain are equipped with some simplicial structure. As we will see in \S \ref{subsection: The templicial dg-nerve}, the dg-nerve $N^{dg}: k\Cat_{dg}\rightarrow \SSet$ can be expressed as a composition of functors including $\mathcal{T}$.
In \cite{holstein2022categorical}, Holstein and Lazarev give a construction of the dg-nerve which involves a bar construction turning a dg-category into a pointed curved coalgebra. Though similar in spirit, it is currently unclear exactly how both approaches are related.
\end{Rem}

\begin{Ex}
Let $0 < j < n$ and consider the coface map $\delta_{j}: [n-1]\rightarrow [n]$ in $\fint$. For a necklace $T = \{0 = t_{0} < t_{1} < ... < t_{k} = n\}$, we have
$$
\mathcal{C}^{\otimes}(\delta_{j})_{T} =
\begin{cases}
\id_{\mathcal{C}_{t_{1}-2}}\otimes ...\otimes d_{j-t_{p-1}-1}\otimes ...\otimes \id_{\mathcal{C}_{n-t_{k-1}-2}} & \text{if }j\not\in T\\
\id_{\mathcal{C}_{t_{1}-2}}\otimes ...\otimes m_{j-t_{p-1}-2,t_{p+1}-j-2}\otimes ...\otimes \id_{\mathcal{C}_{n-t_{k-1}-2}} & \text{if }j\in T
\end{cases}
$$
where $p\in \{1,...,k\}$ is the unique integer such that $t_{p-1} < j\leq t_{p}$. 

Similarly, consider the codegeneracy map $\sigma_{i}: [n+1]\rightarrow [n]$ in $\fint$ with $0\leq i\leq n$. For a necklace $T = \{0 = t_{0} < t_{1} < ... < t_{k} = n\}$, we have
$$
\mathcal{C}^{\otimes}(\sigma_{i})_{T} =
\begin{cases}
\id_{\mathcal{C}_{t_{1}-2}}\otimes ...\otimes s_{i-t_{p-1}-1}\otimes ...\otimes \id_{\mathcal{C}_{n-i_{k-1}-2}} & \text{if }i\not\in T\\
\id_{\mathcal{C}_{t_{1}-2}}\otimes ...\otimes \id_{\mathcal{C}_{i-t_{p-1}-2}}\otimes u\otimes \id_{\mathcal{C}_{t_{p+1}-i-2}}\otimes ...\otimes \id_{\mathcal{C}_{n-t_{k-1}-2}} & \text{if }i\in T
\end{cases}
$$
where $p\in \{1,...,k\}$ is the unique integer such that $t_{p-1} < i\leq t_{p}$.
\end{Ex}

\begin{Con}\label{construction: augmented simp. cat. assoc. to Frob. temp. obj.}
Let $(X,S)$ be a templicial $k$-module with a Frobenius structure $Z$. We construct an $S^{+}\Mod(k)$-category $\mathcal{K}(X)$ as follows.
\begin{itemize}
\item Set $\Ob(\mathcal{K}(X)) = S$.
\item For all $a,b\in S$ define an augmented simplicial module $\mathcal{K}(X)(a,b)$ by
$$
\mathcal{K}(X)_{n}(a,b) = \bigcap_{k=1}^{n+1}\ker((\mu_{k,n+2-k})_{a,b})\subseteq X_{n+2}(a,b)
$$
for all $n\geq -1$. So for example $\mathcal{K}(X)_{-1} = X_{1}$ and $\mathcal{K}(X)_{0} = \ker(\mu_{1,1})$. 

Given $f: [m]\rightarrow [n]$ in $\asimp$, the morphism $[0]\star f\star [0]: [m+2]\rightarrow [n+2]$ restricts to a map
$$
\mathcal{K}(X)(f)_{a,b}: \mathcal{K}(X)_{n}(a,b)\rightarrow \mathcal{K}(X)_{m}(a,b)
$$
\item For $a,b,c\in S$, the composition of $\mathcal{K}(X)$ is given by
$$
d_{p+2}Z^{p+2,q+2}: \mathcal{K}(X)_{p}(a,b)\otimes \mathcal{K}(X)_{q}(b,c)\rightarrow \mathcal{K}(X)_{p+q+1}(a,c)
$$
for all $p,q\geq -1$.
\item For all $a\in S$, the identity on $a$ is given by
$$
s_{0}(a)\in X_{1}(a,a) = \mathcal{K}(X)_{-1}(a,a)
$$
\end{itemize}
\end{Con}

\begin{Lem}\label{lemma: properties of retraction of kernel embedding}
Let $(X,S)$ be a templicial object with Frobenius structure $Z$. Consider the quiver maps
$$
\xi_{n} = \sum_{(T,n)\in \nec}(-1)^{\ell(T)+1}Z^{T}\mu_{T}: X_{n}\rightarrow X_{n}
$$
for $n\geq 1$. Then the following statements are true.
\begin{enumerate}[1.]
\item\label{item: properties of retraction of kernel embedding 1} For all $(U,n)\in \nec$ with $\ell(U) > 1$, we have $\mu_{U}\xi_{n} = 0$ and $\xi_{n}Z^{U} = 0$. In particular, $\xi_{n}$ restricts to a map $X_{n}\rightarrow \mathcal{K}(X)_{n-2}$.
\item\label{item: properties of retraction of kernel embedding 2} Let $\xi_{T} = \xi_{t_{1}}\otimes_{S} ...\otimes_{S} \xi_{n-t_{k-1}}$ for any necklace $T = \{0 = t_{0} < t_{1} < ...  < t_{k} = n\}$. Then we have
$$
\sum_{(T,n)\in \nec}Z^{T}\xi_{T}\mu_{T} = \id_{X_{n}}
$$
\item\label{item: properties of retraction of kernel embedding 3}
For all necklaces $(T,n)$ and $(U,n)$, we have
$$
\xi_{U}\mu_{U}Z^{T}\vert_{\mathcal{K}(X)^{\otimes}_{T}} =
\begin{cases}
\id_{\mathcal{K}(X)^{\otimes}_{T}} & \text{if }T = U\\
0 & \text{if }T\neq U
\end{cases}
$$
\end{enumerate}
\end{Lem}
\begin{proof}
\begin{enumerate}[1.]
\item Take $0 < k < n$, then by Corollary \ref{corollary: higher mu-Z compatibility}.3, we have
\begin{align*}
&\mu_{k,n-k}\left(\sum_{(T,n)\in \nec}(-1)^{\ell(T)}Z^{T}\mu_{T}\right) = \sum_{(U,n)\in \nec}\sum_{\substack{(T,n)\in \nec\\ T\cup\{k\} = U}}(-1)^{\ell(T)}\mu_{k,n-k}Z^{U}\mu_{U}\\
&\qquad = \sum_{\substack{(U,n)\in \nec\\ k\in U}}\left((-1)^{\ell(U\setminus\{k\})} + (-1)^{\ell(U)}\right)\mu_{k,n-k}Z^{U}\mu_{U} = 0
\end{align*}
For more general $(U,n)\in \nec$ with $\ell(U) > 1$, it follows from the coassociativity of $\mu$ that $\mu_{U}\xi_{n} = 0$. The proof that $\xi_{n}Z^{U} = 0$ is completely analogous using Corollary \ref{corollary: higher mu-Z compatibility}.4 because $Z$ is associative.
\item By the coassociativity of $\mu$ and associativity of $Z$, we have
\begin{align*}
\sum_{(T,n)\in \nec}Z^{T}\xi_{T}\mu_{T} &= \sum_{\substack{(U,n)\in \nec\\ T\subseteq U}}(-1)^{\ell(U)+\ell(T)}Z^{U}\mu_{U}\\
&= \sum_{(U,n)\in \nec}(-1)^{\ell(U)}\left(\sum_{\substack{(T,n)\in \nec\\ T\subseteq U}}(-1)^{\ell(T)}\right)Z^{U}\mu_{U}
\end{align*}
Now note that the middle sum is zero whenever $\ell(U) > 1$. Indeed, choose $k\in U\setminus \{0 < n\}$, then $T\mapsto T\setminus \{k\}$ defines a bijection
$$
\{(T,n)\in \nec\mid T\subseteq U, k\in T\}\xrightarrow {\sim} \{(T,n)\in \nec\mid T\subseteq U, k\not\in T\}
$$
and $\ell(T\setminus\{k\}) = \ell(T) - 1$ if $k\in T$. Hence the only remaining term is for $U = \{0 < n\}$, which gives $\id_{X_{n}}$.
\item By Proposition \ref{proposition: higher mu-Z compatibility}, we have
$$
\xi_{U}\mu_{U}Z^{T}\vert_{\mathcal{K}(X)^{\otimes}_{T}} = (\xi_{u_{1}}Z^{T_{1}}\otimes_{S} ... \otimes_{S} \xi_{n-u_{l-1}}Z^{T_{l}})(\mu_{U_{1}}\otimes_{S} ...\otimes_{S} \mu_{U_{k}})\vert_{\mathcal{K}(X)^{\otimes}_{T}}
$$
where $U = \{0 = u_{0} < u_{1} < ... < u_{l} = n\}$ and $(T_{1},...,T_{l})$ and $(U_{1},...,U_{k})$ are the splittings of $T$ over $U$ and $U$ over $T$ respectively. Now it follows from $1$ that the right hand side of this equation vanishes unless the length of every $U_{i}$ and $T_{j}$
 is $1$. But in that case $T = U$ and $\xi_{T}\mu_{T}Z^{T}\vert_{\mathcal{K}(X)^{\otimes}_{T}} = \xi_{T}\vert_{\mathcal{K}(X)^{\otimes}_{T}} = \id_{\mathcal{K}(X)^{\otimes}_{T}}$.
\end{enumerate}
\end{proof}

\begin{Prop}\label{proposition: unit and counit of tensor algebra joint kernel adjunction}
The following statements are true.
\begin{enumerate}[1.]
\item Let $(X,S)$ be a templicial object with Frobenius structure $Z$. For any $n\geq 1$, the following composite induced by the inclusions $\mathcal{K}(X)_{p-2}\hookrightarrow X_{p}$,
$$
\epsilon_{X_{n}}: \mathcal{T}\mathcal{K}(X)_{n} = \bigoplus_{(T,n)\in \nec}\mathcal{K}(X)^{\otimes}_{T}\rightarrow \bigoplus_{(T,n)\in \nec}X_{T}\xrightarrow{(Z^{T})_{T}} X_{n}
$$
is an isomorphism. 
\item Let $\mathcal{C}$ be an $S^{+}\Mod(k)$-category with object set $S = \Ob(\mathcal{C})$. For any $n\geq -1$, the coprojection $\mathcal{C}_{n} = \mathcal{C}^{\otimes}_{\{0 < n+2\}}\hookrightarrow \mathcal{T}(\mathcal{C})_{n+2}$ factors through an isomorphism
$$
\eta_{\mathcal{C}_{n}}: \mathcal{C}_{n}\xrightarrow{\simeq} \mathcal{K}\mathcal{T}(\mathcal{C})_{n}
$$
\end{enumerate}
\end{Prop}
\begin{proof}
\begin{enumerate}[1.]
\item It follows from Lemma \ref{lemma: properties of retraction of kernel embedding} that $X_{n}$ is isomorphic to the direct sum $\bigoplus_{(T,n)}\mathcal{K}(X)^{\otimes}_{T}$ with projections $\xi_{T}\mu_{T}$ and coprojections $Z^{T}\vert_{\mathcal{K}(X)^{\otimes}_{T}}$.
\item This immediately follows from the definition of the comultiplications of $\mathcal{T}(\mathcal{C})$.
\end{enumerate}
\end{proof}

\subsection{The templicial dg-nerve}\label{subsection: The templicial dg-nerve}

We are now ready to define our linear enrichment of the dg-nerve. It is constructed using the main results of the previous two subsections. After discussing it in some detail, we give a characterization of the (Frobenius) templicial maps into the templicial dg-nerve (see Proposition \ref{proposition: temp. maps into dg-nerve} and Corollary \ref{corollary: Frob. temp. maps into dg-nerve}). As a consequence, we show in Corollary \ref{corollary: underlying simp. set of templicial dg-nerve} that the templicial dg-nerve indeed recovers the classical dg-nerve after composing with the forgetful functor $\tilde{U}$.

\begin{Def}\label{definition: templicial dg-nerve}
We define the \emph{templicial differential graded (dg) nerve} as the composite
$$
N^{dg}_{k}: k\Cat_{dg}\xrightarrow{\Gamma^{+}} k\Cat_{\Delta_{+}}\xrightarrow{\mathcal{T}} \Fs\Mod(k)\rightarrow \ts\Mod(k)
$$
where $\Gamma^{+}$ is the right-adjoint induced by Proposition \ref{proposition: monoidal augmented Dold-Kan correspondence}, $\mathcal{T}$ is the equivalence from Proposition \ref{proposition: tensor algebra functor} and the third arrow represents the forgetful functor.
\end{Def}

\begin{Prop}\label{proposition: Frob. temp. obj. and dg-cat. equivalence}
The functor $N^{dg}_{k}$ factors through an equivalence of categories
$$
k\Cat_{dg,\geq 0}\simeq \Fs\Mod(k)
$$
\end{Prop}
\begin{proof}
In view of Proposition \ref{proposition: monoidal augmented Dold-Kan correspondence}, we have an induced adjunction
\[\begin{tikzcd}
	{k\Cat_{\Delta_{+}}} & k\Cat_{dg}
	\arrow[""{name=0, anchor=center, inner sep=0}, "{N^{+}_{\bullet}}", shift left=2, from=1-1, to=1-2]
	\arrow[""{name=1, anchor=center, inner sep=0}, "{\Gamma^{+}}", shift left=2, from=1-2, to=1-1]
	\arrow["\dashv"{anchor=center, rotate=-90}, draw=none, from=0, to=1]
\end{tikzcd}\]
which after restriction to $k\Cat_{dg,\geq 0}$ becomes an equivalence of categories. The result then follows from Theorem \ref{theorem: augmented simp. cat. and Frob. temp. obj. equiv.}.
\end{proof}

Before continuing, let us first recall the classical dg-nerve $N^{dg}: k\Cat_{dg} \rightarrow \SSet$. A few different (but isomorphic) versions of $N^{dg}$ exist in the literature with varying sign conventions \cite{block2009Riemann}, \cite{lurie2016higher}, \cite[Definition 2.2.8]{faonte2017simplicial}, \cite[Tag 00PL]{kerodon}. We will make use of Faonte's ``small dg-nerve'' \cite[Definition 2.2.8]{faonte2017simplicial}, as it is the most convenient for our purpose. It is defined as follows.

Given a small dg-category $\mathcal{C}$, the \emph{differential graded (dg) nerve} $N^{dg}(\mathcal{C})$ is the simplicial set where for every $n\geq 0$, an $n$-simplex is a pair
$$
\left((A_{i})_{i=0}^{n}, (f_{I})_{\substack{I\subseteq [n]\\ \vert I\vert\geq 2}}\right)
$$
where $A_{0},...,A_{n}\in \Ob(\mathcal{C})$ and for each subset $I = \{i_{0} < ... < i_{m}\}\subseteq [n]$ with $m\geq 1$, $f_{I}\in \mathcal{C}_{m - 1}(A_{i_{0}},A_{i_{m}})$ such that
$$
\partial(f_{I}) = \sum_{j=1}^{m-1}\left((-1)^{j-1}f_{I\setminus\{i_{j}\}} + (-1)^{m(j-1)+1}f_{\{i_{j} < ... < i_{m}\}}\circ f_{\{i_{0} < ... < i_{j}\}}\right)
$$
or, when employing the notation from \S\ref{subsection: Notations and conventions}:
$$
\partial(f_{I}) = \sum_{j=1}^{m-1}(-1)^{j-1}\left(f_{I\setminus\{i_{j}\}} - m(f_{\{i_{0} < ... < i_{j}\}}\otimes f_{\{i_{j} < ... < i_{m}\}})\right)
$$
For any $h: [m]\rightarrow [n]$ in $\fint$, the map $N^{dg}(\mathcal{C})_{n}\rightarrow N^{dg}(\mathcal{C})_{m}$ is given by
$$
\left((A_{i})_{i=0}^{n}, (f_{I})_{\substack{I\subseteq [n]\\ \vert I\vert\geq 2}}\right)\mapsto \left((A_{h(i)})_{i=0}^{m}, (h^{*}f_{J})_{\substack{J\subseteq [m]\\ \vert J\vert\geq 2}}\right)
$$
where
$$
h^{*}f_{J} =
\begin{cases}
f_{h(J)} & \text{if }h\text{ is injective on }J\\
\id_{A_{i}} & \text{if }J = \{j_{0} < j_{1}\}\text{ with }h(j_{0}) = i = h(j_{1})\\
0 & \text{otherwise}
\end{cases}
$$

\begin{Ex}\label{example: classical dg-nerve in low dimensions}
Given a small dg-category $\mathcal{C}$, let us decribe the dg-nerve $N^{dg}(\mathcal{C})$ in low dimensions.
\begin{itemize}
\item The vertices of $N^{dg}(\mathcal{C})$ are given by the objects $A\in \Ob(\mathcal{C})$.
\item The edges of $N^{dg}(\mathcal{C})$ are given by the $0$-cycles of the chain complex $\mathcal{C}_{\bullet}(A_{0},A_{1})$ for some $A_{0},A_{1}\in \Ob(\mathcal{C})$, i.e. $f_{01}\in \mathcal{C}_{0}(A_{0},A_{1})$ such that $\partial(f_{01}) = 0$.
\item A $2$-simplex of $N^{dg}(\mathcal{C})$ is given by a (not necessarily commutative) diagram of $0$-cycles:
\[\begin{tikzcd}
	& A_{1} \\
	A_{0} && A_{2}
	\arrow["f_{01}", from=2-1, to=1-2]
	\arrow["f_{12}", from=1-2, to=2-3]
	\arrow[""{name=0, anchor=center, inner sep=0}, "f_{02}"', from=2-1, to=2-3]
	\arrow["f_{012}"', shorten <=3pt, shorten >=3pt, Rightarrow, from=1-2, to=0]
\end{tikzcd}\]
with $f_{012}\in \mathcal{C}_{1}(A_{0},A_{2})$ such that $\partial(f_{012}) = f_{02} - f_{12}\circ f_{01}$. So $f_{012}$ is a homotopy in $\mathcal{C}_{\bullet}(A_{0},A_{2})$ from $f_{12}\circ f_{01}$ to $f_{02}$.
\end{itemize}
\end{Ex}

Let us now make the templicial dg-nerve a little more explicit as well.

\begin{Rem}\label{remark: explicit templicial dg-nerve}
Let $\mathcal{C}$ be small dg-category and consider the templicial object $N^{dg}_{k}(\mathcal{C})$. The vertex set of $N^{dg}_{k}(\mathcal{C})$ is simply $S = \Ob(\mathcal{C})$.

Take $n\geq 0$. From Construction \ref{construction: temp. obj. assoc. to augmented simp. cat.} we have (also see Notation \ref{notation: complement of a necklace}):
$$
N^{dg}_{k}(\mathcal{C})_{n} = \bigoplus_{(T,n)\in \nec}\Gamma^{+}(\mathcal{C})^{\otimes}_{T} = \bigoplus_{(T,n)\in \nec}\Gamma^{+}(\mathcal{C})_{T^{c}_{1}}\otimes_{S} ...\otimes_{S} \Gamma^{+}(\mathcal{C})_{T^{c}_{k}}\in k\Quiv_{S}
$$
For all $A,B\in \Ob(\mathcal{C})$, $\Gamma^{+}(\mathcal{C})_{T^{c}_{i}}(A,B) = \Gamma^{+}(\mathcal{C}_{\bullet}(A,B))_{T^{c}_{i}}$ is the $k$-module
$$
\left\lbrace (a_{I})_{I}\in \bigoplus_{I\subseteq T^{c}_{i}}\mathcal{C}_{\vert I\vert}(A,B)\,\middle\vert\, \partial(a_{I}) = \sum_{j=1}^{p}(-1)^{j-1}a_{I\setminus \{i_{j}\}}\right\rbrace
$$
where we've written $I = \{i_{1} < ... < i_{p}\}\subseteq T^{c}_{i}$.

The counit of $N^{dg}_{k}(\mathcal{C})$ is just the identity $N^{dg}_{k}(\mathcal{C})_{0} = k_{S}$, the monoidal unit of $k\Quiv_{S}$. The comultiplication maps $\mu_{p,q}$ and Frobenius structure maps $Z^{p,q}$ are defined by the canonical projections and coprojections respectively:
\begin{align*}
\mu_{p,q}: N^{dg}_{k}(\mathcal{C})_{p+q} = \bigoplus_{(T,p+q)\in \nec}\Gamma^{+}(\mathcal{C})^{\otimes}_{T}\twoheadrightarrow \bigoplus_{\substack{(T,p+q)\in \nec\\ p\in T}}\Gamma^{+}(\mathcal{C})^{\otimes}_{T}\simeq N^{dg}_{k}(C)_{p}\otimes_{S} N^{dg}_{k}(\mathcal{C})_{q}\\
Z^{p,q}: N^{dg}_{k}(C)_{p}\otimes_{S} N^{dg}_{k}(\mathcal{C})_{q}\simeq \bigoplus_{\substack{(T,p+q)\in \nec\\ p\in T}}\Gamma^{+}(\mathcal{C})^{\otimes}_{T}\hookrightarrow \bigoplus_{(T,p)\in \nec}\Gamma^{+}(\mathcal{C})^{\otimes}_{T} = N^{dg}_{k}(\mathcal{C})_{p+q}
\end{align*}

Finally, the inner face maps and degeneracy maps of $N^{dg}_{k}(\mathcal{C})$ are completely determined by projection onto the component $\Gamma^{+}(\mathcal{C})^{\otimes}_{\{0 < p\}} = \Gamma^{+}(\mathcal{C})_{\{0 < p\}^{c}}$ of $N^{dg}_{k}(\mathcal{C})_{p}$. More precisely:
\begin{itemize}
\item For all $0 < j < n$, the composite of $d_{j}: N^{dg}_{k}(\mathcal{C})_{n}\rightarrow N^{dg}_{k}(\mathcal{C})_{n-1}$ with the canonical projection $N^{dg}_{k}(\mathcal{C})_{n-1}\twoheadrightarrow \Gamma^{+}(\mathcal{C})_{\{0 < n-1\}^{c}}$
is equal to the composite
$$
N^{dg}_{k}(\mathcal{C})_{n}\twoheadrightarrow \Gamma^{+}(\mathcal{C})_{\{0 < n\}^{c}}\oplus (\Gamma^{+}(\mathcal{C})_{\{0 < j\}^{c}}\otimes_{S} \Gamma^{+}(\mathcal{C})_{\{j < n\}^{c}})\xrightarrow{(d^{+}_{j},m^{+})} \Gamma^{+}(\mathcal{C})_{\{0 < n-1\}^{c}}
$$
If $m$ is the composition law of $\mathcal{C}$, then $d^{+}_{j}$ and $m^{+}$ are defined by:
\begin{align*}
d^{+}_{j}\left((a_{I})_{I\subseteq \{0 < n\}^{c}}\right) &= (a_{\delta_{j}(J)})_{J\subseteq \{0 < n-1\}^{c}}\\
m^{+}\left(((a_{I})_{I\subseteq \{0 < j\}^{c}}\otimes (b_{J})_{J\subseteq \{j < n\}^{c}}\right) &= \left(m(a_{\delta_{j}(K)_{< j}}\otimes b_{\delta_{j}(K)_{> j}})\right)_{K\subseteq \{0 < n-1\}^{c}}
\end{align*}
where we used Notation \ref{notation: complement of a necklace}.

\item For all $0\leq i\leq n$, the composite of $s_{i}: N^{dg}_{k}(\mathcal{C})_{n}\rightarrow N^{dg}_{k}(\mathcal{C})_{n+1}$ with the canonical projection $N^{dg}_{k}(\mathcal{C})_{n+1}\twoheadrightarrow \Gamma^{+}(\mathcal{C})_{\{0 < n+1\}^{c}}$ is equal to
$$
\begin{cases}
N^{dg}_{k}(\mathcal{C})_{n}\twoheadrightarrow \Gamma^{+}(\mathcal{C})_{\{0 < n\}^{c}}\xrightarrow{s^{+}_{i}} \Gamma^{+}(\mathcal{C})_{\{0 < n+1\}^{c}} & \text{if }0 < i < n\\
N^{dg}_{k}(\mathcal{C})_{0} = I_{\Ob(\mathcal{C})}\xrightarrow{u} Z_{0}(\mathcal{C}) = \Gamma^{+}(\mathcal{C})_{\emptyset} & \text{if }n = i = 0\\
0 & \text{otherwise}
\end{cases}
$$
where $u: k_{S}\rightarrow \mathcal{C}_{0}$ represents the identities in $\mathcal{C}$, $Z_{0}$ denotes the functor taking $0$-cycles and
$$
s^{+}_{i}\left((a_{I})_{I\subseteq \{0 < n\}^{c}}\right) = (b_{J})_{J\subseteq \{0 < n+1\}^{c}}
$$
with $b_{J} = a_{\sigma_{i}(J)}$ if $\{i,i+1\}\not\subseteq J$ and $b_{J} = 0$ if $\{i,i+1\}\subseteq J$.
\end{itemize}
\end{Rem}

\begin{Ex}
Given small a dg-category $\mathcal{C}$, let us describe the templicial object $N^{dg}_{k}(\mathcal{C})$ in low dimensions. Note the analogy with Example \ref{example: classical dg-nerve in low dimensions}.
\begin{itemize}
\item The vertices of $N^{dg}_{k}(\mathcal{C})$ are given by the objects $A\in \Ob(\mathcal{C})$.
\item Take objects $A,B\in \Ob(\mathcal{C})$. Then
$$
N^{hc}_{k}(\mathcal{C})_{1}(A,B) = \Gamma^{+}(\mathcal{C}_{\bullet}(A,B))_{-1} = Z_{0}(\mathcal{C}_{\bullet}(A,B))
$$
is the submodule of $\mathcal{C}_{0}(A,B)$ of $0$-cycles.
\item In two dimensions, we have
\begin{align*}
N^{dg}_{k}(\mathcal{C})_{2}(A,B) &= \Gamma^{+}(\mathcal{C}_{\bullet}(A,B))_{0}\oplus \bigoplus_{C\in \Ob(\mathcal{C})}\Gamma^{+}(\mathcal{C}_{\bullet}(A,C))_{-1}\otimes \Gamma^{+}(\mathcal{C}_{\bullet}(C,B))_{-1}\\
&\simeq \mathcal{C}_{1}(A,B)\oplus \bigoplus_{C\in \Ob(\mathcal{C})}Z_{0}(\mathcal{C}_{\bullet}(A,C))\otimes Z_{0}(\mathcal{C}_{\bullet}(C,B)) 
\end{align*}
The comultiplication map $\mu_{1,1}: N^{dg}_{k}(\mathcal{C})_{2}\rightarrow N^{dg}_{k}(\mathcal{C})_{1}\otimes_{\Ob(\mathcal{C})} N^{dg}_{k}(\mathcal{C})_{1}$ is given by projection onto the second term in the expression above. On the other hand, the face map $d_{1}: N^{dg}_{k}(\mathcal{C})_{2}\rightarrow N^{dg}_{k}(\mathcal{C})_{1}$ is defined as follows. Given a pair $(h,\alpha)$ with $h\in \mathcal{C}_{1}(A,B)$ and $\alpha$ a tensor belonging to the second term in the expression above, we have
$$
d_{1}(h,\alpha) = \partial(h) + m(\alpha)
$$
where $\partial: \mathcal{C}_{1}(A,B)\rightarrow Z_{0}(\mathcal{C}_{\bullet}(A,B))$ is the differential. Setting $f = d_{1}(h,\alpha)$, we thus find that $h$ describes a homotopy in $\mathcal{C}_{\bullet}(A,B)$ between $f$ and $m(\alpha)$.
\end{itemize}
\end{Ex}

The description of the simplices of the dg-nerve in Example \ref{example: classical dg-nerve in low dimensions} can be generalized to the following remark from \cite{kerodon}.

\begin{Rem}[\cite{kerodon}, Tag 00PV]\label{remark: simp. map into dg-nerve}
Let $\mathcal{C}$ be a small dg-category over $k$ and let $K$ be a simplicial set. A map of simplicial sets $f: K\rightarrow N^{dg}(\mathcal{C})$ is equivalent to the following data:
\begin{itemize}
\item A map of sets $f_{0}: S\rightarrow \Ob(\mathcal{C})$.
\item For all $a,b\in K_{0}$ and $n > 0$, a map
$$
f_{n}: K_{n}(a,b)\rightarrow U\left(\mathcal{C}_{n-1}(f_{0}(a),f_{0}(b))\right)
$$
\end{itemize}
Moreover, this data must satisfy the following conditions:
\begin{enumerate}[(a)]
\item For all $a,b\in K_{0}$, $0\leq i\leq n$ and $\sigma\in K_{n}(a,b)$,
\begin{equation*}
f_{n+1}(s^{K}_{i}(\sigma)) =
\begin{cases}
\id_{f(a)} & \text{if }n = 0, a = b\\
0 & \text{otherwise}
\end{cases}
\end{equation*}
\item For all $a,b\in K_{0}$, $n > 0$ and $\sigma\in K_{n}(a,b)$,
\begin{equation*}
\partial(f_{n}(\sigma)) = \sum_{j=1}^{n-1}(-1)^{j-1}\left(f_{n-1}(d^{K}_{j}(\sigma)) - m(f_{j}(d^{K}_{j+1}...d^{K}_{n}(\sigma))\otimes f_{n-j}(d^{K}_{0}...d^{K}_{0}(\sigma)))\right)
\end{equation*}
\end{enumerate}
\end{Rem}

We can show the analogous statement for the templicial dg-nerve. However, as the proof is rather tedious and not very insightful, we postpone it to Appendix \ref{subsection: Templicial maps into the templicial dg-nerve}.

\begin{Prop}\label{proposition: temp. maps into dg-nerve}
Let $\mathcal{C}$ be a small dg-category over $k$ and $(X,S)$ a templicial $k$-module. A templicial map $(\alpha,f): X\rightarrow N^{dg}_{k}(\mathcal{C})$ is equivalent to the following data:
\begin{itemize}
\item A map of sets $f: S\rightarrow \Ob(\mathcal{C})$.
\item For all $n > 0$, a quiver map
$$
\beta_{n}: X_{n}\rightarrow f^{*}\mathcal{C}_{n-1}
$$
\end{itemize}
satisfying the following properties:
\begin{enumerate}[(a)]
\item For all $0\leq i\leq n$,
\begin{equation}\label{equation: temp. map into dg-nerve 1}
\beta_{n+1}s^{X}_{i} =
\begin{cases}
u & \text{if }n = 0\\
0 & \text{if }n > 0
\end{cases}
\end{equation}
where $u$ denotes the identities of the dg-category $f^{*}\mathcal{C}$.
\item For all $n > 0$,
\begin{equation}\label{equation: temp. map into dg-nerve 2}
\partial\beta_{n} = \sum_{j=1}^{n-1}(-1)^{j-1}\left(\beta_{n-1}d^{X}_{j} - m(\beta_{j}\otimes_{S} \beta_{n-j})\mu^{X}_{j,n-j}\right)
\end{equation}
where $m$ and $\partial$ are respectively the composition law and the differential of the dg-category $f^{*}\mathcal{C}$ (induced by the lax structure of $f^{*}$).
\end{enumerate}
Moreover, for all $n > 0$, $\beta_{n}$ is adjoint to the composite
$$
f_{!}X_{n}\xrightarrow{\alpha_{n}} N^{dg}_{k}(\mathcal{C})_{n}\rightarrow \Gamma^{+}(\mathcal{C})_{\{0 < n\}^{c}}\xrightarrow{\pi_{\{0 < n\}^{c}}} \mathcal{C}_{n-1}
$$
\end{Prop}

\begin{Cor}\label{corollary: Frob. temp. maps into dg-nerve}
Let $\mathcal{C}$ be a small dg-category over $k$ and $(X,S)$ a Frobenius templicial $k$-module. A Frobenius templicial map $(\alpha,f): X\rightarrow N^{dg}_{k}(\mathcal{C})$ is equivalent to the following data:
\begin{itemize}
\item A map of sets $f: S\rightarrow \Ob(\mathcal{C})$.
\item For all $n > 0$, a quiver map
$$
\beta_{n}: X_{n}\rightarrow f^{*}\mathcal{C}_{n-1}
$$
\end{itemize}
satisfying properties \eqref{equation: temp. map into dg-nerve 1} and \eqref{equation: temp. map into dg-nerve 2}, as well as
\begin{equation}\label{equation: Frob. temp. map into dg-nerve}
\beta_{p+q}Z^{p,q}_{X} = 0\quad \text{and}\quad \beta_{p+q-1}d_{p}Z^{p,q}_{X} = m(\beta_{p}\otimes \beta_{q})
\end{equation}
for all $p,q > 0$.
\end{Cor}

\begin{Rem}
Note that under property \eqref{equation: Frob. temp. map into dg-nerve}, \eqref{equation: temp. map into dg-nerve 2} is equivalent to
$$
\partial\beta_{n} = \sum_{j=1}^{n-1}(-1)^{j-1}\beta_{n-1}d^{X}_{j}(\id_{X_{n}} - Z^{j,n-j}_{X}\mu^{X}_{j,n-j})
$$
\end{Rem}

\begin{Cor}\label{corollary: underlying simp. set of templicial dg-nerve}
There is a natural isomorphism $\tilde{U}\circ N^{dg}_{k}\simeq N^{dg}$.
\end{Cor}
\begin{proof}
Let $\mathcal{C}$ be a small dg-category over $k$ and $n\geq 0$. By Proposition \ref{proposition: temp. maps into dg-nerve}, a templicial map $\tilde{F}(\Delta^{n})\rightarrow N^{dg}_{k}(\mathcal{C})$ is equivalent to a map of sets $f: [n]\rightarrow \Ob(\mathcal{C})$ with a collection of quiver morphisms $\beta_{m}: \tilde{F}(\Delta^{n})_{m}\rightarrow f^{*}\mathcal{C}_{m-1}$ for $m > 0$ satisfying properties \eqref{equation: temp. map into dg-nerve 1} and \eqref{equation: temp. map into dg-nerve 2}. The map $f$ is equivalent to a choice of objects $A_{0},...,A_{n}\in \Ob(\mathcal{C})$. Further, for $i,j\in [n]$ we have
$$
\tilde{F}(\Delta^{n})_{m}(i,j) = F(\{h\in \simp([m],[n])\mid h(0) = i, h(m) = j\})
$$
and thus we may represent $\beta_{m}$ by a collection of elements $\beta_{i_{0},...,i_{m}}\in \mathcal{C}_{m-1}(A_{i_{0}},A_{i_{m}})$ for $0\leq i_{0}\leq ... \leq i_{m}\leq n$. Then by property \eqref{equation: temp. map into dg-nerve 1}, $\beta_{i,i} = \id_{i}$ and $\beta_{i_{0},...,i_{m}} = 0$ whenever $m\geq 2$ and $i_{p} = i_{p+1}$ for some $p\in [m-1]$. Hence, $\beta_{m}$ is completely determined by the elements $\beta_{\{i_{0} < ... < i_{m}\}}$ with $i_{0} < ... < i_{m}$. Moreover, property \eqref{equation: temp. map into dg-nerve 2} translates to
$$
\partial(\beta_{\{i_{0} < ... < i_{m}\}}) = \sum_{j=1}^{m-1}(-1)^{j-1}\left(\beta_{I\setminus\{i_{j}\}} - m(\beta_{\{i_{0} < ... < i_{j}\}}\otimes \beta_{\{i_{j} < ... < i_{m}\}})\right)
$$
Hence, the pair $((a_{i})_{i},(\beta_{I})_{I})$ is precisely an  $n$-simplex of $N^{dg}(\mathcal{C})_{n}$. We have thus obtained a bijection between $\tilde{U}(N^{dg}_{k}(\mathcal{C}))_{n}$ and $N^{dg}(\mathcal{C})_{n}$. It now follows easily from the definitions that this bijection is natural in $n$ and $\mathcal{C}$.
\end{proof}

\begin{Rem}
Let $\mathcal{C}$ be a dg-category and $\mathcal{C}^{\triangle}$ its associated simplicial category through the Dold-Kan correspondence. Then there is a canonical trivial fibration of simplicial sets $N^{hc}(\mathcal{C}^{\triangle})\rightarrow N^{dg}(\mathcal{C})$ \cite[Tag 00SV]{kerodon}. In a subsequent paper \cite{mertens2023nerves}, we will give a different description of $N^{dg}_{k}$ by means of necklaces, which allows to lift this map to a comparison map between the templicial homotopy coherent and templicial dg-nerve. 
\end{Rem}

\section{Quasi-categories versus Frobenius structures}\label{section: Quasi-categories versus Frobenius structures}

In this section we will compare quasi-categories in $\mathcal{V}$ with Frobenius templicial objects in $\mathcal{V}$. This comparison will make use of the naF (or nonassociative Frobenius) structures from \S\ref{subsection: Frobenius structures}.

In order to endow a quasi-category in $\mathcal{V}$ with a naF-structure, we impose an additional projectivity condition (Definition \ref{definition: proj. temp. obj.}). In \S\ref{subsection: Deg-projective quasi-categories have a naF-structure}, we show that a deg-projective quasi-category indeed has a naF-structure (Proposition \ref{proposition: cofibrant fibrant temp. obj. has naF-structure}).
In particular, this applies to ordinary quasi categories (in $\mathcal{V} = \Set$), whence these can always be equipped with a naF-structure.

To show a converse, in \S \ref{subsection: Templicial modules with a naF-structure are quasi-categories} we restrict to the setting of templicial modules, that is, we take $\mathcal{V} = \Mod(k)$.
Our main result states that every naF-templicial module is a quasi-category in modules (Theorem \ref{theorem: linear naF-structure implies fibrant}). 
We prove this by successively filling particular types of simplicial subsets of the standard simplex. More precisely, we first go from necklaces (which can be filled almost by definition of a naF-structure) to \emph{wings}, which are defined as the union of the two outer faces. Next, we show that filling wings is equivalent to filling horns.
Our proofs make essential use of the linear setup, producing fillings as linear combinations, and Example \ref{wingsex} shows for instance that filling wings does not imply filling horns for ordinary simplicial sets.

As a consequence of Theorem \ref{theorem: linear naF-structure implies fibrant}, the templicial dg-nerve $N^{dg}_{k}$ from Section \ref{section: Enriching the differential graded nerve} lands in quasi-categories. In Proposition \ref{proposition: comparison of linear nerve and dg-nerve}, we further show that $N^{dg}_{k}$ coincides with the templicial nerve from \cite{lowen2023enriched} when restricted to linear categories (i.e. dg-categories concentrated in degree zero).

\subsection{Deg-projective quasi-categories have a naF-structure}\label{subsection: Deg-projective quasi-categories have a naF-structure}

For this subsection, we assume that the forgetful functor $U: \mathcal{V}\rightarrow \Set$ preserves and reflects regular epimorphisms. This holds for our main case of interest $\mathcal{V} = \Mod(k)$.

Let us call a morphism in $\mathcal{V}$ \emph{projective} if it has the left lifting property with respect to all regular epimorphisms in $\mathcal{V}$. In particular, an object $P\in \mathcal{V}$ is regular projective in the usual sense if and only if the morphism $0\rightarrow P$ is projective. Note that since $U$ preserves regular epimorphisms, the monoidal unit $I$ of $\mathcal{V}$ is necessarily regular projective, and more generally so is every $F(S)$ with $S$ a set.

It is easily verified that a morphism in $\mathcal{V}$ is projective if and only if it is a retract of a coprojection $A\rightarrow A\amalg B$ with $B$ a regular projective object. Moreover since $U$ preserves and reflects regular epimorphisms, we obtain a weak factorisation system (projective morphisms, regular epimorphisms) on $\mathcal{V}$. For the following examples, in which all epimorphisms are regular, this  takes the following familiar form:
\begin{itemize}
\item On $\mathcal{V} = \Set$: (injective maps, surjective maps).
\item On $\mathcal{V} = \Mod(k)$: (coprojections $A\rightarrow A\oplus B$ with $B$ a projective $k$-module, surjective linear maps).
\end{itemize}
Note that a quiver morphism $f: Q\rightarrow P$ in $\mathcal{V}\Quiv_{S}$ is a regular epimorphism if and only if for all $a,b\in S$, $f_{a,b}: Q(a,b)\rightarrow P(a,b)$ is a regular epimorphism in $\mathcal{V}$. It is follows that a quiver morphism $f$ is projective if and only if $f_{a,b}$ is projective in $\mathcal{V}$ for all $a,b\in S$, and there is a corresponding weak factorisation system on $\mathcal{V}\Quiv_{S}$.

\begin{Def}\label{definition: proj. temp. obj.}
We call a templicial object \emph{deg-projective} if for all $n\geq 1$, the canonical quiver map
$$
X^{deg}_{n}\rightarrow X_{n}
$$
is projective, where $X^{deg}_{n}$ is the \emph{quiver of degenerate $n$-simplices} of $X$ (also see \cite[Definition 2.16]{lowen2023enriched}). That is,
$$
X^{deg}_{n} = \colim_{\substack{\sigma: [n]\twoheadrightarrow [k]\\ \text{surjective in }\simp\\ 0\leq k < n}}X_{k}
$$
\end{Def}

\begin{Ex}\label{example: simplicial set is projective}
Any simplicial set $K$ is deg-projective when considered as a templicial set. Moreover, the templicial object $\tilde{F}(K)$ in $\mathcal{V}$ is always deg-projective.
\end{Ex}

\begin{Ex}
Consider the ring $\mathbb{Z}/2\mathbb{Z}$ as a one-object $\mathbb{Z}$-linear category. Then its templicial nerve $N_{\mathbb{Z}}(\mathbb{Z}/2\mathbb{Z})$ (see \cite[Definition 2.11]{lowen2023enriched}) is not deg-projective.
\end{Ex}


Given a templicial object $(X,S)$ and integers $0 < j < n$, consider the following limit of quivers
$$
M_{j,n}X = \lim_{\substack{f: T\hookrightarrow \{0 < n\}\\ f\neq \id_{\{0 < n\}}, f\neq \delta_{j}}}\!\!\!X_{T}\in \mathcal{V}\Quiv_{S}
$$
taken over the full subcategory of $(\nec_{/\{0 < n\}})^{op}$ spanned by all the necklace maps $T\hookrightarrow \{0 < n\}$ that are injective on vertices, except $\delta_{j}: \{0 < n-1\}\hookrightarrow \{0 < n\}$ and the identity on $\{0 < n\}$. As $U: \mathcal{V}\rightarrow \Set$ preserves limits, we see that an element of $U(M_{j,n}X(a,b))$ with $a,b\in S$ may be identified with collections $(x_{k})^{n-1}_{k=1}$ and $(y_{i})^{n-1}_{l=1, i\neq j}$ satisfying the conditions of Proposition \ref{proposition: properties of quasi-cats.}.\ref{item: elementwise charac. of quasi-cat.}. Consequently, the condition that $X$ is a quasi-category in $\mathcal{V}$ is equivalent to the condition that the canonical quiver morphism $X_{n}\rightarrow M_{j,n}X$ is a regular epimorphism (as $U$ preserves and reflects regular epimorphisms).

\begin{Prop}\label{proposition: cofibrant fibrant temp. obj. has naF-structure}
Any deg-projective quasi-category in $\mathcal{V}$ has a naF-structure. In particular, every ordinary quasi-category has a naF-structure.
\end{Prop}
\begin{proof}
We define quiver morphisms $Z^{p,q}: X_{p}\otimes X_{q}\rightarrow X_{p+q}$ by induction on $n = p + q$, for all $p,q\geq 0$. Define $Z^{p,0}$ and $Z^{0,q}$ to be the left and right unit isomorphisms. Now take $n > 0$ and let $p,q > 0$ be such that $p + q = n$. Then consider the following commutative diagram:
\[\begin{tikzcd}
	{(X^{deg}_{p}\otimes X_{q})\amalg_{(X^{deg}_{p}\otimes X^{deg}_{q})}(X_{p}\otimes X^{deg}_{q})} & {X_{n}} \\
	{X_{p}\otimes X_{q}} & {M_{p,n}X}
	\arrow[from=1-1, to=2-1]
	\arrow[from=1-1, to=1-2]
	\arrow[from=1-2, to=2-2]
	\arrow[from=2-1, to=2-2]
	\arrow[dashed, from=2-1, to=1-2]
\end{tikzcd}\]
As $X$ is a deg-projective templicial object, it follows that the left vertical morphism is projective. The top horizontal morphism is induced by the already defined morphisms $Z^{k,l}$ with $k + l < n$, which is well-defined by the fact that the $Z^{k,l}$ are natural with respect to the degeneracy maps of $X$. The bottom horizontal morphism is determined by the morphisms
$$
X_{p}\otimes X_{q}\xrightarrow{\xi_{k}} X_{k}\otimes X_{n-k}\quad \text{and}\quad X_{p}\otimes X_{q}\xrightarrow{\zeta_{i}} X_{n-1}
$$
for all $0 < k,i < n$ with $i\neq p$, where
$$
\xi_{k} =
\begin{cases}
(Z^{p,k-p}\otimes \id_{X_{l}})(\id_{X_{p}}\otimes \mu_{k-p,l}), & p\leq k\\
(\id_{X_{k}}\otimes Z^{p-k,q})(\mu_{k,p-k}\otimes \id_{X_{q}}), & p\geq k
\end{cases}
,\quad
\zeta_{i} =
\begin{cases}
Z^{p-1,q}(d_{i}\otimes \id_{X_{q}}), & i < p\\
Z^{p,q-1}(\id_{X_{p}}\otimes d_{i-p}), & i > p
\end{cases}
$$
Hence as the right vertical morphism is a regular epimorphism, there exists a lift $Z^{p,q}: X_{p}\otimes X_{q}\rightarrow X_{n}$ which by construction is natural with respect to the degeneracy and inner face morphisms of $X$, and satisfies the Frobenius equations \eqref{equation: mu-Z compatibility}.

In particular, if $X$ is an ordinary quasi-category, it is deg-projective as a templicial set by Example \ref{example: simplicial set is projective} and thus it has a naF-structure.
\end{proof}

The converse to Proposition \ref{proposition: cofibrant fibrant temp. obj. has naF-structure} is false in general, as Example \ref{example: naF-structure does not imply quasi-cat.} shows. However, in the next subsection we will see that the converse does hold in the linear case (see Theorem \ref{theorem: linear naF-structure implies fibrant}).

\begin{Ex}\label{example: naF-structure does not imply quasi-cat.}
Let $X$ be the simplicial set defined as the colimit of
\begin{center}
\begin{tikzpicture}
\draw
(-2,0) node(0){$\Delta^{3}$}
(-1,1) node(1){$\Lambda^{3}_{3}$}
(0,0) node(2){$\Delta^{3}$}
(1,1) node(3){$\Lambda^{3}_{0}$}
(2,0) node(4){$\Delta^{3}$};

\draw[->] (1) -- (0);
\draw[->] (1) -- (2);
\draw[->] (3) -- (2);
\draw[->] (3) -- (4);
\end{tikzpicture}
\end{center}
It is the standard $3$-simplex $\Delta^{3}$, whose simplices we will represent by their vertices $[i_{0},...,i_{m}]$, with two non-degenerate $3$-simplices $x$ and $y$ glued on. We have
\begin{align*}
\forall i\in\{0,1,2\}: d_{i}(x) = [0,...,\cancel{i},...,3]\quad \text{but }d_{3}(x)\neq [0,1,2]\\
\forall j\in\{1,2,3\}: d_{j}(y) = [0,...,\cancel{j},...,3]\quad \text{but }d_{0}(y)\neq [1,2,3]
\end{align*}

In $X$, not all horns can be filled. Indeed, since
\begin{gather*}
d_{0}d_{3}(x) = d_{2}([1,2,3]) = [1,2] = d_{0}([0,1,2]) = d_{2}d_{0}(y),\\
d_{2}d_{3}(x) = [0,1] = d_{2}([0,1,3])\quad \text{and}\quad d_{1}d_{0}(y) = [1,3] = d_{0}([0,1,3])
\end{gather*}
the faces $d_{3}(x)$, $d_{0}(y)$ and $[0,1,3]$ form a horn $\Lambda^{3}_{1}$ in $X$. But there is no $3$-simplex in $X$ with these faces.

However, $X$ does have a naF-structure. It suffices to define $Z^{p,q}(a,b)$ on non-degenerate simplices $a$ and $b$. For those contained in $\Delta^{3}$, define
$$
Z^{p,q}([i_{0},...,i_{p}],[i_{p},...,i_{p+q}]) = [i_{0},...,i_{p+q}]
$$
note that this includes all edges of $X$. Further, set
$$
Z^{2,1}(d_{3}(x),[2,3]) = x,\quad \text{and}\quad Z^{1,2}([0,1],d_{0}(y)) = y
$$
It is easy to check that this satisfies the Frobenius equations \eqref{equation: mu-Z compatibility}.
\end{Ex}

Proposition \ref{proposition: cofibrant fibrant temp. obj. has naF-structure} does not hold without assuming deg-projectivity.

\begin{Ex}
Let $\mathcal{V} = \Mod(\mathbb{Z}) = \Ab$ and consider the $\Ab$-enriched quiver $Q$ with vertex set $S = \{a,b\}$ and
$$
Q(x,y) =
\begin{cases}
\mathbb{Z} & \text{if }x = a, y = b\\
\mathbb{Z}/2\mathbb{Z} & \text{if }x = y\\
0 & \text{otherwise}
\end{cases}
$$ 
The unit of $Q$ is given by the quotient map $q: \mathbb{Z}\twoheadrightarrow \mathbb{Z}/2\mathbb{Z}$. We can extend $Q$ to a templicial object $X$ by setting $X_{n}(a,b) = \mathbb{Z}^{\times n}$, $X_{n}(b,a) = 0$ and $X_{n}(x,x) = \mathbb{Z}/2\mathbb{Z}$ for $x\in \{a,b\}$. Then $X$ is easily seen to be a quasi-category in $\Ab$, but the canonical map $X^{deg}_{1}(a,a)\rightarrow X_{1}(a,a)$ is given by $q$ which is not projective. Note that $X$ does not have a naF-structure, because this would require the existence of a map
$$
Z^{1,1}: (X_{1}\otimes X_{1})(a,b)\simeq \mathbb{Z}/2\mathbb{Z}\oplus \mathbb{Z}/2\mathbb{Z}\rightarrow X_{2}(a,b)\simeq \mathbb{Z}\oplus \mathbb{Z}
$$
which is a section of $\mu_{1,1} = q\oplus q: \mathbb{Z}\oplus \mathbb{Z}\twoheadrightarrow \mathbb{Z}/2\mathbb{Z}\oplus \mathbb{Z}/2\mathbb{Z}$.
\end{Ex}

\subsection{Templicial modules with a naF-structure are quasi-categories}\label{subsection: Templicial modules with a naF-structure are quasi-categories}

Our main result in this subsection is Theorem \ref{theorem: linear naF-structure implies fibrant} which states that every templicial $k$-module with a Frobenius structure is a quasi-category in modules. As a consequence, we find that the templicial dg-nerve of any dg-category is a quasi-category in modules (Corollary \ref{corollary: templicial dg-nerve is fibrant}).

We start by introducing the \emph{wings} $W^{n}$ of a simplex $\Delta^{n}$ for $n\geq 2$, which are defined as the union of its two outer faces. Given a necklace $(T,n)$ and $0 < j < n$, the unique inert necklace map $T\hookrightarrow \{0 < n\}$ can thus be identified with a composite of inclusions of bipointed simplicial sets:
$$
T\subseteq W^{n}\subseteq \Lambda^{n}_{j}\subseteq \Delta^{n}
$$
By design, a naF-structure on a templicial object $X$ allows to fill up any necklace $T$ in $X$ to a simplex via the morphism $Z^{T}: X_{T}\rightarrow X_{n}$. To prove Theorem \ref{theorem: linear naF-structure implies fibrant}, we show successively that we can fill each of the above simplicial subsets to an entire simplex. First from necklaces to wings (Proposition \ref{proposition: nonassociative Frobenius implies linear wedges}) and then from wings to horns (Proposition \ref{proposition: linear wedges equiv. fibrant}).

\begin{Def}
For $n\geq 2$, we write $W^{n}$ for the simplicial subset of $\Delta^{n}$ defined by
$$
W^{n}([m]) = \{f: [m]\rightarrow [n]\mid f(m) < n\text{ or }f(0) > 1\}
$$
for all $m\geq 0$. We call $W^{n}$ the \emph{wings} of $\Delta^{n}$. It consists of the $0$th and $n$th face of $\Delta^{n}$. We say a functor $Y: \nec^{op}\rightarrow \mathcal{V}$ \emph{lifts wings} if for all $n\geq 2$, any lifting problem in $\mathcal{V}^{\nec^{op}}$:
\[\begin{tikzcd}
	{\tilde{F}(W^{n})_{\bullet}(0,n)} & {Y} \\
	{\tilde{F}(\Delta^{n})_{\bullet}(0,n)}
	\arrow[from=1-1, to=1-2]
	\arrow[from=1-1, to=2-1]
	\arrow[dashed, from=2-1, to=1-2]
\end{tikzcd}\]
where the vertical morphism is induced by the inclusion $W^{n}\subseteq \Delta^{n}$, has a solution.
\end{Def}

\begin{Prop}\label{proposition: wedges as unions of faces}
For all $n\geq 2$, we have
$$
W^{n}_{\bullet}(0,n) = \bigcup_{k=1}^{n-1}(\Delta^{k}\vee \Delta^{n-k})_{\bullet}(0,n)
$$
as a subfunctor of $\Delta^{n}_{\bullet}(0,n)$.
\end{Prop}
\begin{proof}
Following Example \ref{example: hom-object of necklace cat. assoc. to simp. set}, it suffices to note that for any necklace $(T,p)$, a map $f: T\rightarrow \Delta^{n}$ in $\SSet_{*,*}$ factors through $W^{n}\subseteq \Delta^{n}$ if and only if there exists a $k\in T$ such that $0 < f(k) < n$.
\end{proof}

\begin{Lem}\label{lemma: wedge inclusions are inner anodyne}
For all $n\geq 2$, the inclusion $W^{n}_{\bullet}(0,n)\hookrightarrow \Delta^{n}_{\bullet}(0,n)$ belongs to the weak saturated closure of the set
$$
\left\lbrace(\Lambda^{n}_{j})_{\bullet}(0,n)\hookrightarrow \Delta^{n}_{\bullet}(0,n)\,\middle\vert\, 0 < j < n\right\rbrace
$$
\end{Lem}
\begin{proof}
Denote the weak saturated closure of the above set by $\mathcal{A}$. For $0 < k < n$, let us denote by $A^{n}_{k}$ the simplicial subset of $\Delta^{n}$ given by the union of the faces $0,...,k-1$ and $n$. We consider it as an object of $\SSet_{*,*}$ with distinguished points given by $0$ and $n$. Then $A^{n}_{k}$ contains all vertices of $\Delta^{n}$ and
$$
(A^{n}_{k})_{\bullet}(0,n) = W^{n}_{\bullet}(0,n)\cup \bigcup_{j=1}^{k-1}\delta_{j}(\Delta^{n-1})_{\bullet}(0,n)
$$
by Proposition \ref{proposition: wedges as unions of faces}. We will show by double induction on $n\geq 2$ and $0 < k < n$ that the inclusion
\begin{equation}\label{diagram: wedge inclusions are anodyne}
(A^{n}_{k})_{\bullet}(0,n) \hookrightarrow \Delta^{n}_{\bullet}(0,n)
\end{equation}
belongs to $\mathcal{A}$. The result then follows by choosing $k = 1$.

If $k = n-1$, then \eqref{diagram: wedge inclusions are anodyne} coincides with the horn inclusion $(\Lambda^{n}_{n-1})_{\bullet}(0,n)\hookrightarrow \Delta^{n}_{\bullet}(0,n)$ by \cite[	Proposition 5.1]{lowen2023enriched}. Note that this covers the entire case $n = 2$.

Assume further that $k <  n - 1$ and let $(T,p)$ be a necklace. For a map $h: T\rightarrow \Delta^{n}$ in $\SSet_{*,*}$ and $0 < i < n$, $h$ factors through $(\Delta^{i}\vee \Delta^{n-i})_{\bullet}(0,n)$ if and only if $i\in h(T)$, and $h$ factors through $\delta_{i}(\Delta^{n-1})$ if and only if $h([p])\subseteq [n]\setminus \{i\}$. Now take a map $g: T\rightarrow \Delta^{n-1}$ in $\SSet_{*,*}$. It follows that $\delta_{k}g: T\rightarrow \Delta^{n}$ factors through $A^{n}_{k}\cap \delta_{k}(\Delta^{n-1})$ if and only if $g$ factors through $A^{n-1}_{k}$. Hence, we have a pushout diagram in $\Set^{\nec^{op}}$:
\[\begin{tikzcd}
	{(A^{n-1}_{k})_{\bullet}(0,n-1)} & {(A^{n}_{k})_{\bullet}(0,n)} \\
	{\Delta^{n-1}_{\bullet}(0,n-1)} & {(A^{n}_{k+1})_{\bullet}(0,n)}
	\arrow["{\delta_{k}}", from=1-1, to=1-2]
	\arrow[hook, from=1-1, to=2-1]
	\arrow["{\delta_{k}}"', from=2-1, to=2-2]
	\arrow[hook, from=1-2, to=2-2]
\end{tikzcd}\]
By the induction hypothesis, the left vertical map belongs to $\mathcal{A}$ and thus so does the right vertical map. Further, the inclusion $(A^{n}_{k+1})_{\bullet}(0,n)\hookrightarrow \Delta^{n}_{\bullet}(0,n)$ also belongs to $\mathcal{A}$. This completes the proof.
\end{proof}

\begin{Prop}\label{proposition: fibrant implies wedges}
Let $(X,S)$ be a quasi-category in $\mathcal{V}$. Then for every $a,b\in S$, the functor $X_{\bullet}(a,b): \nec^{op}\rightarrow \mathcal{V}$ lifts wings.
\end{Prop}
\begin{proof}
This is an immediate consequence of Lemma \ref{lemma: wedge inclusions are inner anodyne}.
\end{proof}

\begin{Prop}\label{proposition: nonassociative Frobenius implies linear wedges}
Let $(X,S)$ be a templicial $k$-module with a naF-structure. Then for every $a,b\in S$, the functor $X_{\bullet}(a,b): \nec^{op}\rightarrow \Mod(k)$ lifts wings.
\end{Prop}
\begin{proof}
Let $Z$ denote the naF-structure of $X$. Take $n\geq 2$ and $a,b\in S$. By Proposition \ref{proposition: wedges as unions of faces}, a morphism $\tilde{F}(W^{n})_{\bullet}(0,n)\rightarrow X_{\bullet}(a,b)$ in $\Mod(k)^{\nec^{op}}$ corresponds to a collection $(x_{k})_{k=1}^{n-1}$ of elements with $x_{k}\in U((X_{k}\otimes_{S} X_{n-k})(a,b))$ for all $0 < k < n$ such that for all $0 < k < l < n$ we have
\begin{equation}\label{equation: nonassociative Frobenius implies linear wedges}
(\id_{X_{k}}\otimes \mu_{l-k,n-l})(x_{k}) = (\mu_{k,l-k}\otimes \id_{X_{n-l}})(x_{l})
\end{equation}
To extend the above morphism to $\tilde{F}(\Delta^{n})_{\bullet}(0,n)$, we must find some $z\in X_{n}(a,b)$ such that $\mu_{k,n-k}(z) = x_{k}$ for all $0 < k < n$.

Given a necklace $(T,n)$ with $\ell(T)\geq 2$, we can choose $k\in T\setminus \{0,n\}$. Consider the splitting $(T_{1},T_{2})$ of $T$ over $\{0 < k < n\}$. Then set
$$
x_{T} = (\mu_{T_{1}}\otimes \mu_{T_{2}})(x_{k})\in U(X_{T}(a,b))
$$
Note that by \eqref{equation: nonassociative Frobenius implies linear wedges}, this expression does not depend on the choice of $k$. Then it follows by Proposition \ref{proposition: higher mu-Z compatibility} and Corollary \ref{corollary: higher mu-Z compatibility}.1 that
\begin{align*}
\mu_{k,n-k}Z^{T}(x_{T}) &= (Z^{T_{1}}\otimes Z^{T_{2}})(\id\otimes \mu_{T'}\otimes \id)(x_{T})\\
&= (Z^{T_{1}}\otimes Z^{T_{2}})(x_{T\cup\{k\}}) = \mu_{k,n-k}Z^{T\cup\{k\}}(x_{T\cup\{k\}})
\end{align*}
where $T'$ is some necklace with $\ell(T') = 2$ and the second equality follows from \eqref{equation: nonassociative Frobenius implies linear wedges}. Now consider
$$
z = \sum_{\substack{(T,n)\in \nec\\ \ell(T)\geq 2}}(-1)^{\ell(T)}Z^{T}(x_{T})
$$
Then it follows analogously to Lemma \ref{lemma: properties of retraction of kernel embedding}.\ref{item: properties of retraction of kernel embedding 1} that $\mu_{k,n-k}(z - Z^{k,n-k}(x_{k})) = 0$ and thus $\mu_{k,n-k}(z) = x_{k}$.
\end{proof}

This next result is shown using a similar argument as the classical proof that every simplicial group is a Kan complex (see for instance \cite{moore1958semi}).

\begin{Prop}\label{proposition: linear wedges equiv. fibrant}
Let $(X,S)$ be a templicial $k$-module. Then the following statements are equivalent.
\begin{enumerate}[(1)]
\item $X$ is a quasi-category in modules.
\item For all $a,b\in S$, the functor $X_{\bullet}(a,b): \nec^{op}\rightarrow \Mod(k)$ lifts wings.
\end{enumerate}
\end{Prop}
\begin{proof}
If $X$ is a quasi-category in modules, then $(2)$ holds by Proposition \ref{proposition: fibrant implies wedges}. Conversely, take $0 < j < n$, $a,b\in S$ and let $(x_{k})_{k=1}^{n-1}$ and $(y_{i})_{i=1,i\neq j}^{n-1}$ be collections of elements satisfying the conditions of Proposition \ref{proposition: properties of quasi-cats.}.3. Consider the following condition on elements $z\in U(X_{n}(a,b))$:
\begin{equation}\label{condition: compatibility with wedge}
\mu_{k,l}(z) = x_{k}\quad (\text{for all } 0 < k < n)
\end{equation}
Let us start by noting that if $z\in X_{n}$ satisfies \eqref{condition: compatibility with wedge}, then we have for all $0 < k < n$:
\begin{align*}
&\mu_{k,n-k}(s_{i}(y_{i} - d_{i}(z))) = 0 &(\text{for all }0 < i < j)\\
&\mu_{k,n-k}(s_{i-1}(y_{i} - d_{i}(z))) = 0 &(\text{for all }j < i < n)
\end{align*}
Indeed, for the first equation, there are three cases:
\begin{align*}
&\mu_{k,n-k}(s_{i}(y_{i} - d_{i}(z)))\\
&= \begin{cases}
(s_{i}\otimes \id_{X_{n-k}})(\mu_{k-1,n-k}(y_{i}) - (d_{i}\otimes \id_{X_{n-k}})(x_{k})) & \text{if }i < k\\
(\id_{X_{k}}\otimes s_{0})(\mu_{k,n-k-1}(y_{k}) - (d_{k}\otimes \id_{X_{n-k-1}})(x_{k+1})) & \text{if }i = k\\
(\id_{X_{k}}\otimes s_{i-k})(\mu_{k,n-k-1}(y_{i}) - (\id_{X_{k}}\otimes d_{i-k})(x_{k})) & \text{if }i > k
\end{cases}\\
&= 0
\end{align*}
The second equation follows similarly.

Now assuming $(2)$, there exists an element $z^{0}\in U(X_{n}(a,b))$ satisfying condition \eqref{condition: compatibility with wedge}. Then define, inductively on $l\in \{1,...,j-1\}$:
$$
z^{l} = z^{l-1} + s_{l}(y_{l} - d_{l}(z^{l-1}))\in U(X_{n}(a,b))
$$
By the previous remarks, each $z^{l}$ satisfies (\ref{condition: compatibility with wedge}). We then prove by induction on $l$ that for all $0 < i \leq l$:
$$
d_{i}(z^{l}) = y_{i}
$$
Indeed, for $l = 0$ this is trivial and if $l > 0$ we have:
$$
d_{i}(z^{l}) =
\begin{cases}
y_{i} - s_{l-1}(d_{i}(y_{l}) - d_{l-1}(y_{i})) & \text{if }i < l\\
d_{l}(z^{l-1}) + y_{l} - d_{l}(z^{l-1}) & \text{if }i = l
\end{cases}\quad = y_{i}
$$

Finally, set $z^{n} = z^{j-1}$ and define inductively on $l\in \{j+1,...,n-1\}$:
$$
z^{l} = z^{l+1} + s_{l-1}(y_{l} - d_{l}(z^{l+1}))
$$
Then again $z^{l}$ satisfies (\ref{condition: compatibility with wedge}) for all $j < l \leq n$. We prove by induction on $l$ that for all $i\in\{1,...,j-1\}\cup\{l,...,n-1\}$:
$$
d_{i}(z^{l}) = y_{i}
$$
Again this is trivial for $l = n$ and if $l < n$ we have
$$
d_{i}(z^{l}) =
\begin{cases}
y_{i} - s_{l-2}(d_{i}(y_{l}) - d_{l-1}(y_{i})) & \text{if }i < l-1\\
d_{l}(z^{l+1}) + y_{l} - d_{l}(z^{l+1}) & \text{if }i = l\\
y_{i} - s_{l-1}(d_{i-1}(y_{l}) - d_{l}(y_{i})) & \text{if }i > l
\end{cases}
\quad = y_{i}
$$
Note that the case $i = l-1$ does not occur. Thus it suffices to set $z = z^{j+1}$.
\end{proof}

The previous proposition does not hold for ordinary simplicial sets, as the following example shows.

\begin{Ex}\label{wingsex}
Consider the simplicial set $X = \Delta^{3}\amalg_{\partial\Delta^{2}}\Delta^{2}$, gluing an extra $2$nd face to the standard $3$-simplex. Formally, it is the pushout of the inclusion $\partial\Delta^{2}\subseteq \Delta^{2}$ along the map $\partial\Delta^{2}\rightarrow \Delta^{3}$
sending vertices $0\mapsto 0$, $1\mapsto 1$ and $2\mapsto 3$. Denote the simplices of $\Delta^{3}$ by ordered sequences $[i_{0},...,i_{m}]$ and denote the extra face by $x\in X_{2}$. We then have $d_{0}(x) = [1,3]$, $d_{1}(x) = [0,3]$ and $d_{2}(x) = [0,1]$, but $x\neq [0,1,3]$.

Then $X$ is certainly not a quasi-category as there exists no $3$-simplex $z$ with $d_{0}(z) = [1,2,3]$, $d_{2}(z) = x$ and $d_{3}(z) = [0,1,2]$.

However, all wings in $X$ can be filled. Indeed, a map $\alpha: W^{n}\rightarrow X$ is uniquely determined by simplices $y,z\in X_{n-1}$ such that $d_{0}(y) = d_{n-1}(z)$. If either $y$ or $z$ is degenerate, $\alpha$ extends trivially to $\Delta^{n}$. Assuming they are both non-degenerate, we have either $n = 2$ or $n = 3$. As $W^{2} = \Lambda^{2}_{1}$ and the quasi-category $\Delta^{3}$ contains all edges of $X$, the case $n = 2$ is covered. If $n = 3$, we must have $y = [0,1,2]$ and $z = [1,2,3]$, which can be filled by $[0,1,2,3]$ itself.
\end{Ex}

\begin{Thm}\label{theorem: linear naF-structure implies fibrant}
Let $X$ be a templicial $k$-module with a naF-structure. Then $X$ is a quasi-category in modules.
\end{Thm}
\begin{proof}
Combine Propositions \ref{proposition: nonassociative Frobenius implies linear wedges} and \ref{proposition: linear wedges equiv. fibrant}.
\end{proof}

\begin{Cor}\label{corollary: free functor preserves quasi-cat.}\label{corfree}
Let $\mathcal{C}$ be an ordinary quasi-category. Then $\tilde{F}(\mathcal{C})$ is a quasi-category in modules.
\end{Cor}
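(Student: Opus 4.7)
The plan is to chain together the four main results established earlier in Section 4.2 and Section 3. Given an ordinary quasi-category $X$, I first view $X$ as a templicial set via Construction \ref{proposition: cts-sets are simp. sets}. Then Proposition \ref{proposition: quasi-cat. implies nonassociative Frobenius} supplies a naF-structure $Z = (Z^{p,q})_{p,q \geq 0}$ on $X$, built by inductively filling inner horns $\Lambda^{p+q}_p \to X$.

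Next I apply Proposition \ref{proposition: free functor preserves nonassociative Frobenius} to the strong monoidal colimit-preserving functor $F : \Set \to \Mod(k)$. Strictly speaking, that proposition requires $F$ to be strong monoidal between cocomplete monoidal categories whose tensor products are cocontinuous in each variable and that $F$ preserves coproducts; all of this is automatic for the free $k$-module functor (which sends cartesian product of sets to tensor product of free modules). Applying it to the naF-templicial set $(X, Z)$ produces a naF-structure on $\tilde{F}(X)$, whose multiplication is obtained by transporting $Z^{p,q}$ through the canonical isomorphisms $F_{S}(X_p) \otimes_S F_S(X_q) \simeq F_S(X_p \otimes X_q)$.

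From here the remaining two steps are immediate: Proposition \ref{proposition: nonassociative Frobenius implies lin. wedges} says that any naF-templicial $k$-module has an underlying simplicial set that lifts wedges, so $\tilde{U}(\tilde{F}(X))$ lifts wedges. Finally, Proposition \ref{proposition: lin. wedges equiv. lin. quasi-cat.} states that lifting wedges in $\tilde{U}(-)$ is equivalent to being a linear quasi-category, yielding the desired conclusion that $\tilde{F}(X)$ lies in $\QCat(k)$.

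There is no real obstacle here, since all the heavy lifting is done in the preceding results; the only thing to double-check is the hypothesis bookkeeping in Proposition \ref{proposition: free functor preserves nonassociative Frobenius}, i.e.\ verifying that $F : \Set \to \Mod(k)$ satisfies the standing assumptions (strong monoidal, cocontinuous, preserves coproducts), which is routine. The proof can therefore be written in essentially one line: combine Propositions \ref{proposition: quasi-cat. implies nonassociative Frobenius}, \ref{proposition: free functor preserves nonassociative Frobenius}, \ref{proposition: nonassociative Frobenius implies lin. wedges} and \ref{proposition: lin. wedges equiv. lin. quasi-cat.}.
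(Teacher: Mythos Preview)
Your proposal is correct and follows exactly the paper's own proof, which simply reads: ``This follows from Propositions \ref{proposition: quasi-cat. implies nonassociative Frobenius}, \ref{proposition: free functor preserves nonassociative Frobenius}, \ref{proposition: nonassociative Frobenius implies lin. wedges} and \ref{proposition: lin. wedges equiv. lin. quasi-cat.}.'' Your hypothesis check on $F:\Set\to\Mod(k)$ is a reasonable bit of extra care but not strictly necessary, as these properties were already recorded at the start of \S\ref{parenrq}.
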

\begin{proof}
By Proposition \ref{proposition: cofibrant fibrant temp. obj. has naF-structure}, $\mathcal{C}$ has a naF-structure. It is easy to see that $\tilde{F}(\mathcal{C})$ has an induced naF-structure as well and thus the result follows from Theorem \ref{theorem: linear naF-structure implies fibrant}.
\end{proof}

\begin{Cor}\label{corollary: temp. obj. assoc. to augmented simp. cat. is fibrant}
Let $\mathcal{C}$ be a small $S^{+}\Mod(k)$-category. Then the underlying templicial $k$-module of $\mathcal{T}(\mathcal{C})$ is a quasi-category in modules.
\end{Cor}
\begin{proof}
This immediately follows from Theorem \ref{theorem: linear naF-structure implies fibrant}.
\end{proof}

\begin{Cor}\label{corollary: templicial dg-nerve is fibrant}
Let $\mathcal{C}$ be a small dg-category over $k$. Then its templicial dg-nerve $N^{dg}_{k}(\mathcal{C})$ is a quasi-category in modules.
\end{Cor}
\begin{proof}
Apply Corollary \ref{corollary: temp. obj. assoc. to augmented simp. cat. is fibrant} to the $S^{+}\Mod(k)$-category $\Gamma^{+}(\mathcal{C})$.
\end{proof}

To end the section, let us compare the templicial dg-nerve to the templicial nerve $N_{k}: k\Cat\rightarrow \ts\Mod(k)$ of \cite{lowen2023enriched}. Let $\iota: k\Cat\hookrightarrow k\Cat_{dg}$ denote the inclusion which considers linear categories as dg-categories concentrated in degree $0$, and let $H_{0}: k\Cat_{dg}\rightarrow k\Cat$ denote the functor taking $0$th homology. The templicial nerve $N_{k}$ has a left-adjoint $h_{k}$. If $X$ is a quasi-category in modules, the underlying ordinary category of $h_{k}X$ is isomorphic to $h\tilde{U}(X)$, the homotopy category of $\tilde{U}(X)$.

\begin{Prop}\label{proposition: comparison of linear nerve and dg-nerve}
We have natural isomorphisms
$$
N^{dg}_{k}\circ \iota \simeq N_{k}\quad \text{and}\quad h_{k}\circ N^{dg}_{k}\simeq H_{0}
$$
\end{Prop}
\begin{proof}
Let us denote the functor from right to left in the equivalence of Proposition \ref{proposition: Frob. temp. obj. and dg-cat. equivalence} by $DG = N^{+}_{\bullet}\mathcal{K}: \Fs\Mod(k)\rightarrow k\Cat_{dg,\geq 0}$. Clearly, $\iota$ factors through $k\Cat_{dg,\geq 0}$ and the templicial nerve functor $N_{k}$ factors through $\Fs\Mod(k)$ by Example \ref{example: nerve has Frob structure}. Therefore, it suffices to show that we have natural isomorphisms
$$
\iota\simeq DG\circ N_{k}\quad \text{and}\quad h_{k}\simeq H_{0}\circ DG
$$

Let $\mathcal{C}$ be a small $k$-linear category. Since the comultiplication maps of $N_{k}(\mathcal{C})$ are invertible, we have that the $S^{+}\Mod(k)$-category $\mathcal{K}(N_{k}(\mathcal{C}))$ is concentrated in degree $-1$ and thus $DG(N_{k}(\mathcal{C}))$ is concentrated in degree $0$. It follows that $DG\circ N_{k}$ is naturally isomorphic to $\iota$.

Let $(X,S)$ be a Frobenius templicial $k$-module. Then it is a quasi-category in modules by Theorem \ref{theorem: linear naF-structure implies fibrant}. Boiling down the definitions, we see that the set of objects of $DG(X)$ is $S$ as well and that for every $a\in S$, the degenerate 1-simplex $s_{0}(a)$ represents the identity in both $h_{k}X$ and $H_{0}(DG(X))$. Take $a,b,c\in S$. Then the differential $\partial: DG_{1}(X)(a,c)\rightarrow DG_{0}(X)(a,c)$ is just the restriction $d_{1}\vert_{\ker(\mu_{1,1})}: \ker(\mu_{1,1})(a,c)\rightarrow X_{1}(a,c)$. Hence, for any three $f\in X_{1}(a,b)$, $g\in X_{1}(b,c)$ and $h\in X_{1}(a,c)$, the composition $gf$ is homologous to $h$ in $DG(X)$ if and only if there exists a $w\in \ker(\mu_{1,1})(x,z)$ such that $d_{1}(w) = h - gf$. This is equivalent to the existence of a templicial map $\alpha: \tilde{F}(\Delta^{2})\rightarrow X$ with $\alpha_{0,1} = 0$, $\alpha_{1,2} = s_{0}(x)$ and $\alpha_{0,2} = h - gf$ (using the notation of Proposition \ref{proposition: properties of temp. objs.}.\ref{item: simplex of underlying simp. set}). In other words, $[h - gf] = [0]$ in $h\tilde{U}(X)$ and thus $[g]\circ [f] = [h]$ in $h_{k}X$. Specializing to the case $f = s_{0}(x)$, we find that $[g] = [h]$ in $H_{0}(DG(X))$ if and only if $[g] = [f]$ in $h_{k}X$. This shows that $[f]\mapsto [f]$ defines an isomorphism of $k$-linear categories
$$
h_{k}X\simeq H_{0}(DG(X))
$$
It follows easily that this isomorphism is natural in $X$.
\end{proof}

\appendix

\section{Frobenius structures and $S^{+}\Mod(k)$-categories}\label{section: Frobenius structures and augmented simplicial categories}

The goal of this section is to prove Theorem \ref{theorem: augmented simp. cat. and Frob. temp. obj. equiv.}. We do this by explicitly constructing the functors $\mathcal{T}$ and $\mathcal{K}$ and then showing they are inverse to each other. Let us start by constructing the functor $\mathcal{T}:  k\Cat_{\Delta_{+}}\rightarrow \Fs\Mod(k)$.

\begin{Prop}\label{proposition: Frob. temp. obj. assoc. to augmented simp. cat.}
Let $\mathcal{C}$ be a small $S^{+}\Mod(k)$-category with object set $S$. Then $\mathcal{T}(\mathcal{C})$ of Construction \ref{construction: temp. obj. assoc. to augmented simp. cat.} is a well-defined Frobenius templicial $k$-module.
\end{Prop}
\begin{proof}
We first show that $\mathcal{T}(\mathcal{C}): \fint^{op}\rightarrow k\Quiv_{\Ob(\mathcal{C})}$ is a well-defined functor. Take morphisms $f: [m]\rightarrow [n]$ and $g: [n]\rightarrow [p]$ in $\fint$ and $(T,p)\in \nec$. Setting $U = g^{-1}(T)$ and $V = f^{-1}(U)$, we must show that
$$
\mathcal{C}^{\otimes}(f)_{U}\circ \mathcal{C}^{\otimes}(g)_{T} = \mathcal{C}^{\otimes}(gf)_{T}
$$
By the functoriality of the monoidal product $-\otimes_{S} -$, we may assume that $\ell(V) = 1$. Now write $U = \{0 = u_{0} < u_{1} < ... < u_{l} = n\}$ and let $k = \ell(T)$. It follows from the naturality and the associativity of $m$ that
\begin{align*}
&\mathcal{C}^{\otimes}(f)_{U}\circ \mathcal{C}^{\otimes}(g)_{T}\\
&\quad = \mathcal{C}(f\vert_{V^{c}_{1}})m_{U^{c}_{1},...,U^{c}_{l}}\left(\mathcal{C}(g\vert_{U^{c}_{1}})m_{T^{c}_{1},...,T^{c}_{p_{1}}}\otimes_{S} ... \otimes_{S} \mathcal{C}(g\vert_{U^{c}_{l}})m_{T^{c}_{p_{l-1}+1},...,T^{c}_{k}}\right)\\
&\quad = \mathcal{C}((g\vert_{U^{c}_{1}}\sqcup ... \sqcup g\vert_{U^{c}_{l}})f\vert_{V^{c}_{1}})m_{T^{c}_{1},...,T^{c}_{k}}\\
&\quad = \mathcal{C}(gf\vert_{V^{c}_{1}})m_{T^{c}_{1},...,T^{c}_{k}} = \mathcal{C}^{\otimes}(gf)_{T}
\end{align*}
Further, for a necklace $(T,n)$ with $k = \ell(T)$ we have
$$
\mathcal{C}^{\otimes}(\id_{[n]})_{T} = \mathcal{C}(\id_{T^{c}_{1}})m_{T^{c}_{1}}\otimes_{S} ...\otimes_{S} \mathcal{C}(\id_{T^{c}_{k}})m_{T^{c}_{k}} = \id_{\mathcal{C}_{T^{c}_{1}}}\otimes_{S} ...\otimes_{S} \id_{\mathcal{C}_{T^{c}_{k}}}
$$

Next we verify that the comultiplication and multiplication maps defined in Construction \ref{construction: temp. obj. assoc. to augmented simp. cat.} make $\mathcal{T}(\mathcal{C})$ into a Frobenius monoidal functor. First note that by definition, $\mathcal{T}(\mathcal{C})_{0}\simeq k_{S}$ is the monoidal unit of $k\Quiv_{S}$ and $\mathcal{C}^{\otimes}_{T}\otimes_{S} \mathcal{C}^{\otimes}_{U}\simeq \mathcal{C}^{\otimes}_{T\vee U}$ for all necklaces $T$ and $U$. Now take $f: [k]\rightarrow [p]$ and $g: [l]\rightarrow [q]$ in $\fint$, and $(T,p),(U,q)\in \nec$. Then we have $f^{-1}(T)\vee g^{-1}(U) = (f + g)^{-1}(T\vee U)$ and it follows that under the above isomorphism:
$$
\mathcal{C}^{\otimes}(f + g)_{T\vee U} = \mathcal{C}^{\otimes}(f)_{T}\otimes_{S} \mathcal{C}^{\otimes}(g)_{U}
$$
From this it is easy to see that $\mu_{k,l}$ and $Z^{k,l}$ are natural in $k,l\geq 0$.

We complete the proof by showing that the maps $\mu_{k,l}$ and $Z^{k,l}$ satisfy the Frobenius equation \eqref{equation: mu-Z compatibility}. Take $k,l,p,q\geq 0$ such that $k + l = p + q$ and assume that $k\geq p$. Then for all $(T,p+q)\in \nec$ with $p\in T$ we have
$$
(Z^{p,k-p}\otimes_{S} \id_{\mathcal{T}(\mathcal{C})_{l}})(\id_{\mathcal{T}(\mathcal{C})_{p}}\otimes_{S} \mu_{k-p,l})\iota_{T} =
\begin{cases}
\iota_{T} & \text{if }k\in T\\
0 & \text{if }k\not\in T
\end{cases}
= \mu_{k,l}Z^{p,q}\iota_{T}
$$
A similar proof shows the case $k\leq p$.
\end{proof}

\begin{Prop}\label{proposition: tensor algebra functor}
The assignment $\mathcal{C}\mapsto (\mathcal{T}(\mathcal{C}),\Ob(\mathcal{C}))$ of Proposition \ref{proposition: Frob. temp. obj. assoc. to augmented simp. cat.} extends to a functor
$$
\mathcal{T}: k\Cat_{\Delta_{+}}\rightarrow \Fs\Mod(k)
$$
\end{Prop}
\begin{proof}
Let $H: \mathcal{C}\rightarrow \mathcal{D}$ be an $S^{+}\Mod(k)$-enriched functor between small $S^{+}\Mod(k)$-categories and let $f: S\rightarrow T$ denote its object map. For every finite linearly ordered set $J$, we have a quiver map $\mathcal{C}_{J}\rightarrow f^{*}(\mathcal{D}_{J})$ in $k\Quiv_{S}$. Denote its adjoint in $k\Quiv_{T}$ by $H_{J}: f_{!}(\mathcal{C}_{J})\rightarrow \mathcal{D}_{J}$. Then for any necklace $(U,n)$ with $k = \ell(U)$, define:
$$
H^{\otimes}_{U}: f_{!}(\mathcal{C}^{\otimes}_{U})\rightarrow f_{!}(\mathcal{C}_{U^{c}_{1}})\otimes_{T} ... \otimes_{T} f_{!}(\mathcal{C}_{U^{c}_{k}})\xrightarrow{H_{U^{c}_{1}}\otimes_{T} ...\otimes_{T} H_{U^{c}_{k}}} \mathcal{D}^{\otimes}_{U}
$$
Then for all $n\geq 0$, consider the following quiver maps in $k\Quiv_{T}$:
$$
\mathcal{T}(H)_{n}: f_{!}(\mathcal{T}(\mathcal{C})_{n})\simeq \bigoplus_{(U,n)\in \nec}f_{!}(\mathcal{C}^{\otimes}_{U})\xrightarrow{\bigoplus_{U}H^{\otimes}_{U}} \bigoplus_{(U,n)\in \nec}\mathcal{D}^{\otimes}_{U} = \mathcal{T}(\mathcal{D})_{n}
$$

Now it follows from the functoriality of $H$ that
\begin{itemize}
\item $H_{J}f_{!}(\mathcal{C}(h)) = \mathcal{D}(h)H_{I}$ for all $h: I\rightarrow J$ in $\Lin$, and
\item for all $I,J\in \Lin$, the following diagram commutes
\[\begin{tikzcd}
	{f_{!}(\mathcal{C}_{I\sqcup J})} & {\mathcal{D}_{I\sqcup J}} \\
	{f_{!}(\mathcal{C}_{I}\otimes_{S} \mathcal{C}_{J})} & {f_{!}(\mathcal{C}_{I})\otimes_{T} f_{!}(\mathcal{C}_{J})} & {\mathcal{D}_{I}\otimes_{T}\mathcal{D}_{J}}
	\arrow[from=2-1, to=2-2]
	\arrow["{m^{\mathcal{C}}_{I,J}}", from=2-1, to=1-1]
	\arrow["{H_{I\sqcup J}}", from=1-1, to=1-2]
	\arrow["{m^{\mathcal{D}}_{I,J}}"', from=2-3, to=1-2]
	\arrow["{H_{I}\otimes_{T} H_{J}}"', from=2-2, to=2-3]
\end{tikzcd}\]
\end{itemize}
From this it easily follows that the quiver maps $(\mathcal{T}(H)_{n})_{n\geq 0}$ define a natural transformation $\mathcal{T}(H): f_{!}\mathcal{T}(C)\rightarrow \mathcal{T}(\mathcal{D})$ between functors $\fint^{op}\rightarrow k\Quiv_{T}$. Further, it is clear that for all necklaces $U$ and $V$, $H^{\otimes}_{U\vee V}$ is equal to the composite
$$
f_{!}(\mathcal{C}^{\otimes}_{U\vee V})\rightarrow f_{!}(\mathcal{C}^{\otimes}_{U})\otimes_{T} f_{!}(\mathcal{C}^{\otimes}_{V})\xrightarrow{H^{\otimes}_{U}\otimes H^{\otimes}_{V}} \mathcal{D}^{\otimes}_{U}\otimes_{T}\mathcal{D}^{\otimes}_{V}\simeq \mathcal{D}^{\otimes}_{U\vee V}
$$
From this it is easy to see that the natural transformation $\mathcal{T}(H)$ is monoidal and satisfies \eqref{diagram: Frob. temp. map}. Thus $(\mathcal{T}(H),f): (\mathcal{T}(C),S)\rightarrow (\mathcal{T}(\mathcal{D}),T)$ a Frobenius templicial map.

It now immediately follows from the definitions that this defines a functor
$$
\mathcal{T}: k\Cat_{\Delta_{+}}\rightarrow \Fs\Mod(k)
$$
\end{proof}

Let us now construct the inverse $\mathcal{K}: \Fs\Mod(k)\rightarrow k\Cat_{\Delta_{+}}$ of $\mathcal{T}$.

\begin{Lem}\label{lemma: narrow morphism induces map on joint kernels}
Let $(X,S)$ be a templicial $k$-module with comultiplication $\mu$ and $m,n\geq 1$. Let $f: [m]\rightarrow [n]$ be an order morphism such that $f^{-1}(\{0\}) = \{0\}$ and $f^{-1}(\{n\}) = \{m\}$. Then the quiver map $X(f): X_{n}\rightarrow X_{m}$ restricts to
$$
\mathcal{K}(X)_{n-2}\rightarrow \mathcal{K}(X)_{m-2}
$$
\end{Lem}
\begin{proof}
Take $a,b\in S$ and $x\in X_{n}(a,b)$ such that $\mu_{k,n-k}(x) = 0$ for all $0 < k < n$. Then for all $0 < p < m$, there exist unique morphisms $f_{1}: [p]\rightarrow [k]$ and $f_{2}: [m-p]\rightarrow [n-k]$ in $\fint$ such that $f_{1} + f_{2} = f$ with $k = f(p)$ (see \S\ref{subsection: Notations and conventions}). By the hypothesis on $f$, we have that $0 < k < n$ as well. Now
$$
\mu_{p,m-p}(X(f)(x)) = (X(f_{1})\otimes_{S} X(f_{2}))\mu_{k,n-k}(x) = 0
$$
and the result follows.
\end{proof}

\begin{Lem}\label{lemma: join and sum interaction}
Let $f: [k]\rightarrow [p]$ and $g: [l]\rightarrow [q]$ be morphisms in $\asimp$. Then
$$
\delta_{p+2}([0]\star f\star g\star [0]) = ([0]\star f\star [0] + [0]\star g\star [0])\delta_{k+2}
$$
\end{Lem}
\begin{proof}
Clearly the morphisms on both sides of the equation preserve the endpoints. Evaluating either side in $0 < i < k + l + 3$, we obtain
$$
\begin{cases}
f(i - 1) + 1 & \text{if }i\leq k+1\\
g(i - k - 2) + p + 3 & \text{if }i\geq k+2
\end{cases}
$$
\end{proof}

\begin{Prop}\label{proposition: augmented simp. cat. assoc. to Frob. temp. obj.}
Let $(X,S)$ be a templicial $k$-module with Frobenius structure $Z$. Then $\mathcal{K}(X)$ of Construction \ref{construction: augmented simp. cat. assoc. to Frob. temp. obj.} is a well-defined $S^{+}\Mod(k)$-category.
\end{Prop}
\begin{proof}
Let $f: [m]\rightarrow [n]$ be a morphism in $\asimp$. Then $[0]\star f\star [0]$ satisfies the hypothesis of Lemma \ref{lemma: narrow morphism induces map on joint kernels} and thus $X([0]\star f\star [0]): X_{n+2}(a,b)\rightarrow X_{m+2}(a,b)$ indeed restricts to a map $\mathcal{K}(X)_{n}(a,b)\rightarrow \mathcal{K}(X)_{m}(a,b)$ for all $a,b\in S$. It is clear that this is functorial so that we obtain a well-defined augmented simplicial $k$-module $\mathcal{K}(X)(a,b)$. Let $p,q\geq -1$ and set $n = p + q + 3$ and consider the quiver map
$$
d_{p+2}Z^{p+2,q+2}: X_{p+2}\otimes_{S} X_{q+2}\rightarrow X_{p+q+3}
$$
For all $0 < k < n$, we have
\begin{align*}
&\mu_{k,n-k}d_{p+2}Z^{p+2,q+2} =
\begin{cases}
(d_{p+2}\otimes_{S} \id_{X_{n-k}})\mu_{k+1,n-k}Z^{p+2,q+2} & \text{if }p+2\leq k\\
(\id_{X_{k}}\otimes_{S} d_{p+2-k})\mu_{k,n-k+1}Z^{p+2,q+2} & \text{if }p+2 > k
\end{cases}\\
&=
\begin{cases}
(d_{p+2}Z^{p+2,k-p-1}\otimes_{S} \id_{X_{n-k}})(\id_{X_{p+2}}\otimes_{S} \mu_{k-p-1,n-k})& \text{if }p+2\leq k\\
((\id_{X_{k}}\otimes_{S} d_{p+2-k}Z^{p+2-k,q+2})(\mu_{k,p+2-k}\otimes_{S} \id_{X_{q+2}})& \text{if }p+2 > k
\end{cases} 
\end{align*}
which implies that $d_{p+2}Z^{p+2,q+2}$ restricts to a map
$$
m_{p,q}: \mathcal{K}(X)_{p}(a,b)\otimes \mathcal{K}(X)_{q}(b,c)\rightarrow \mathcal{K}(X)_{p+q+1}(a,b,c)
$$
for all $a,b,c\in S$. Given morphisms $f: [k]\rightarrow [p]$ and $g: [l]\rightarrow [q]$ in $\asimp$, Lemma \ref{lemma: join and sum interaction} implies that
$$
X([0]\star f\star g\star [0])d_{p+2}Z^{p+2,q+2} = d_{k+2}Z^{k+2,l+2}\left(X([0]\star f\star [0])\otimes_{S} X([0]\star g\star [0])\right)
$$
It follows that the maps $(m_{p,q})_{p,q\geq -1}$ define a morphism of augmented simplicial modules
$$
m: \mathcal{K}(X)(a,b)\star \mathcal{K}(X)(b,c)\rightarrow \mathcal{K}(X)(a,c)
$$

It remains to show that $m$ is associative and unital with respect to $s_{0}(a)$ for $a\in S$. For this it suffices to note that for all $p,q,r\geq -1$:
\begin{align*}
&d_{p+q+3}Z^{p+q+3,r+2}(d_{p+2}Z^{p+2,q+2}\otimes_{S} \id_{X_{r+2}})\\
&\quad = d_{p+q+3}d_{p+2}Z^{p+q+4,r+2}(Z^{p+2,q+2}\otimes_{S} \id_{X_{r+2}})\\
&\quad = d_{p+2}d_{p+q+4}Z^{p+2,q+r+4}(\id_{X_{p+2}}\otimes_{S} Z^{q+2,r+2})\\
&\quad = d_{p+2}Z^{p+2,q+r+3}(\id_{X_{p+2}}\otimes_{S} d_{q+2}Z^{q+2,r+2})
\end{align*}
\begin{align*}
d_{p+2}Z^{p+2,1}(\id_{X_{p+2}}\otimes_{S} s_{0}\epsilon^{-1}) = d_{p+2}s_{p+2}Z^{p+2,0}(\id_{X_{p+2}}\otimes_{S} \epsilon^{-1}) = \id_{X_{p+2}}\\
d_{1}Z^{1,p+2}(s_{0}\epsilon^{-1}\otimes_{S} \id_{X_{p+2}}) = d_{1}s_{0}Z^{0,p+2}(\epsilon^{-1}\otimes_{S} \id_{X_{p+2}}) = \id_{X_{p+2}}
\end{align*}
where $\epsilon: X_{0}\xrightarrow{\sim} k_{S}$ is the counit of $X$ and $k_{S}$ is the monoidal unit of $k\Quiv_{S}$.
\end{proof}

\begin{Prop}\label{proposition: joint kernel functor}
The assignment $(X,Z)\mapsto \mathcal{K}(X)$ of Proposition  \ref{proposition: augmented simp. cat. assoc. to Frob. temp. obj.} extends to a functor
$$
\mathcal{K}: \Fs\Mod(k)\rightarrow k\Cat_{\Delta_{+}}
$$
\end{Prop}
\begin{proof}
Take a Frobenius templicial map $\alpha: X\rightarrow Y$ with vertex map $f: S\rightarrow T$. Consider the underlying natural transformation $\alpha: f_{!}X\rightarrow Y$ and its adjoint $\alpha': X\rightarrow f^{*}Y$. Then $\alpha'$ clearly restricts to a natural transformation
$$
\mathcal{K}(\alpha): \mathcal{K}(X)\rightarrow f^{*}\mathcal{K}(Y)
$$
between functors $\asimp^{op}\rightarrow k\Quiv_{S}$, which can equivalently be considered as a morphism of $S^{+}\Mod(k)$-enriched quivers. It then follows from the compatibilty of $\alpha$ with the Frobenius structures that $\mathcal{K}(\alpha)$ is an $S^{+}\Mod(k)$-enriched functor.

It immediately follows from the definitions that this defines a functor
$$
\mathcal{K}: \Fs\Mod(k)\rightarrow k\Cat_{\Delta_{+}}
$$
\end{proof}

We finish the section by showing Theorem \ref{theorem: augmented simp. cat. and Frob. temp. obj. equiv.}.

\begin{Prop}\label{proposition: tensor algebra essentially surjective}
Let $X$ be a Frobenius templicial $k$-module. The quiver isomorphisms $(\epsilon_{X_{n}})_{n\geq 0}$ of Proposition \ref{proposition: unit and counit of tensor algebra joint kernel adjunction} define an isomorphism
$$
\epsilon_{X}: \mathcal{T}\mathcal{K}(X)\xrightarrow{\sim} X
$$
of Frobenius templicial $k$-modules which is natural in $X$.
\end{Prop}
\begin{proof}
Let $T = \{0 = t_{0} < t_{1} < t_{2} < ... < t_{k} = n\}$ be a necklace. Writing $m$ for the composition law of $\mathcal{K}(X)$ (see Construction \ref{construction: augmented simp. cat. assoc. to Frob. temp. obj.}), note that
$$
m_{T^{c}_{1},...,T^{c}_{k}} = X(\delta_{T})Z^{T}\vert_{\mathcal{K}(X)^{\otimes}_{T}}: \mathcal{K}(X)_{T^{c}_{1}}\otimes_{S} ...\otimes_{S} \mathcal{K}(X)_{T^{c}_{k}}\rightarrow \mathcal{K}(X)_{[n]\setminus T}
$$
where we denoted $\delta_{T} = \delta_{t_{k-1}}\cdots \delta_{t_{2}}\delta_{t_{1}}$. Now take a morphism $f: [m]\rightarrow [n]$ in $\fint$ with a necklace $(T,n)$ and set $U = f^{-1}(T) = \{0 = u_{0} < u_{1} < ... < u_{l} = m\}$ and let $(T_{1},...,T_{l})$ be the splitting of $T$ over $f(U)$. Further, $f = f_{1} + ... + f_{l}$ for some unique $f_{i}: [u_{i} - u_{i-1}]\rightarrow [f(u_{i}) - f(u_{i-1})]$ in $\fint$. In fact, we have the following equality of morphisms in $\asimp$:
$$
f_{i} = \delta_{T_{i}}([0]\star f\vert_{U^{c}_{i}}\star [0])
$$
where we identified $U^{c}_{i}$ with $[u_{i}-u_{i-1}-2]$. Hence, we find that
\begin{align*}
&Z^{U}\vert_{\mathcal{K}(X)^{\otimes}_{U}}\circ \mathcal{K}(X)^{\otimes}(f)_{T}\\
&\quad = Z^{U}\left(\mathcal{K}(X)(f\vert_{U^{c}_{1}})X(\delta_{T_{1}})Z^{T_{1}}\otimes_{S} ...\otimes_{S} \mathcal{K}(X)(f\vert_{U^{c}_{l}})X(\delta_{T_{l}})Z^{T_{l}}\right)\vert_{\mathcal{K}(X)^{\otimes}_{T}}\\
&\quad = Z^{U}(X(f_{1})Z^{T_{1}}\otimes_{S} ...\otimes_{S} X(f_{l})Z^{T_{l}})\vert_{\mathcal{K}(X)^{\otimes}_{T}}\\
&\quad = X(f)Z^{f(U)}(Z^{T_{1}}\otimes_{S} ...\otimes_{S} Z^{T_{l}})\vert_{\mathcal{K}(X)^{\otimes}_{T}} = X(f)\circ Z^{T}\vert_{\mathcal{K}(X)^{\otimes}_{T}}
\end{align*}
where we used the associativity of $Z$ in the last equality. We have thus shown that $\epsilon_{X}: \mathcal{T}\mathcal{K}(X)\rightarrow X$ is a natural transformation between functors $\fint^{op}\rightarrow k\Quiv_{S}$.

Next, it follows from Proposition \ref{proposition: higher mu-Z compatibility} that for all necklaces $(T,n)$ and $0 < k < n$:
$$
\mu_{k,n-k}Z^{T}\vert_{\mathcal{K}(X)^{\otimes}_{T}} =
\begin{cases}
Z^{T_{1}}\vert_{\mathcal{K}(X)^{\otimes}_{T_{1}}}\otimes_{S} Z^{T_{2}}\vert_{\mathcal{K}(X)^{\otimes}_{T_{2}}} & \text{if }k\in T\\
0 & \text{if }k\not\in T
\end{cases}
$$
where $(T_{1},T_{2})$ is the splitting of $T$ over $\{0 < k < n\}$. From this is it easy to see that $\epsilon_{X}$ is respects the comultiplications of $\mathcal{T}\mathcal{K}(X)$ and $X$ as well. Then it is clear from the definitions that $\epsilon_{X}$ is in fact a Frobenius templicial map.

Finally, the naturality of $\epsilon_{X}$ in $X$ quickly follows from the definitions of $\mathcal{T}$ and $\mathcal{K}$, and diagram \eqref{diagram: Frob. temp. map}.
\end{proof}

\begin{Prop}\label{proposition: tensor algebra kernel adjunction unit}
Let $\mathcal{C}$ be a small $S^{+}\Mod(k)$-category. Then the quiver morphisms $(\eta_{\mathcal{C}_{n}})_{n\geq -1}$ of Proposition \ref{proposition: unit and counit of tensor algebra joint kernel adjunction} define an isomorphism in $k\Cat_{\Delta_{+}}$:
$$
\eta: \mathcal{C}\rightarrow \mathcal{K}\mathcal{T}(\mathcal{C})
$$
that is natural in $\mathcal{C}$.
\end{Prop}
\begin{proof}
If $f: [m]\rightarrow [n]$ is a morphism in $\asimp$, then $g = [0]\star f\star [0]: [m+2]\rightarrow [n+2]$ belongs to $\fint$ and $g^{-1}(\{0 < n+2\}) = \{0 < m+2\}$. Thus $\mathcal{C}^{\otimes}(f)_{\{0 < n+2\}} = \mathcal{C}(f)$. It follows that the quiver maps $(\eta_{\mathcal{C}_{n}})_{n\geq -1}$ define a map $\eta_{\mathcal{C}}: \mathcal{C}\rightarrow \mathcal{K}\mathcal{T}(\mathcal{C})$ in $S^{+}\Mod(k)\text{-}\Quiv_{S}$ where $S = \Ob(\mathcal{C})$.

To show that $\eta_{\mathcal{C}}$ is also an $S^{+}\Mod(k)$-functor, let $m$ denote the composition law of $\mathcal{C}$. Also, let the quiver map $u: k_{S}\rightarrow \mathcal{C}_{-1}$ represent the identities of $\mathcal{C}$ with $k_{S}$ the monoidal unit of $k\Quiv_{S}$. For $p,q\geq -1$, consider the coface map $\delta_{p+2}: [p+q+3]\rightarrow [p+q+4]$. Then it suffices to note that $m_{p,q}$ is precisely
$$
\mathcal{C}^{\otimes}(\delta_{p+2})_{\{0 < p+2 < p+q+4\}}: \mathcal{C}_{p}\otimes_{S} \mathcal{C}_{q}\rightarrow \mathcal{C}_{p+q+1}
$$
and that the degeneracy map $s_{0}: \mathcal{T}(\mathcal{C})_{0}\rightarrow \mathcal{T}(\mathcal{C})_{1}$ coincides with $u$.

Finally, the naturality of $\eta_{\mathcal{C}}$ follows immediately from the definitions.
\end{proof}

\begin{proof}[Proof of Theorem \ref{theorem: augmented simp. cat. and Frob. temp. obj. equiv.}]
In view of Propositions \ref{proposition: tensor algebra essentially surjective} and \ref{proposition: tensor algebra kernel adjunction unit}, it remains to verify the triangle identities for the unit $\eta$ and the counit $\epsilon$.

Let $(X,Z)$ be a Frobenius templicial module. Then $\mathcal{K}(\epsilon_{X})\circ \eta_{\mathcal{K}(X)} = \id_{\mathcal{K}(X)_{n}}$ follows from the fact that for all $n\geq -1$, $Z^{\{0 < n+2\}}\vert_{\mathcal{K}(X)_{n}}$ is the identity on $\mathcal{K}(X)_{n}$.

Let $\mathcal{C}$ be an $S^{+}\Mod(k)$-category with object set $S = \Ob(\mathcal{C})$. Then to prove $\epsilon_{\mathcal{T}(\mathcal{C})}\circ \mathcal{T}(\eta_{\mathcal{C}}) = \id_{\mathcal{T}(\mathcal{C})}$, it suffices to note that the composite
$$
\mathcal{T}(\mathcal{C})_{n} = \bigoplus_{(T,n)\in \nec}\!\!\!\mathcal{C}_{T^{c}_{1}}\otimes_{S} ...\otimes_{S} \mathcal{C}_{T^{c}_{k}}\hookrightarrow \bigoplus_{(T,n)\in \nec}\!\!\!\mathcal{T}(\mathcal{C})_{t_{1}}\otimes_{S} ...\otimes_{S} \mathcal{T}(\mathcal{C})_{n-t_{k-1}}\xrightarrow{(Z^{T})_{T}} \mathcal{T}(\mathcal{C})_{n}
$$
is the identity for all $n\geq 0$, where $Z$ is the Frobenius structure of $\mathcal{T}(\mathcal{C})$. The latter follows quickly from the definition of $Z$.
\end{proof}

\section{Templicial maps into the templicial dg-nerve}\label{subsection: Templicial maps into the templicial dg-nerve}

The goal of this section is to prove Proposition \ref{proposition: temp. maps into dg-nerve}. Let us first fix a templicial object $(X,S)$ and a dg-category $\mathcal{C}$ over $k$ with object set $S$. We denote the comultiplication of $X$ by $\mu$ and the composition law and identities of $\mathcal{C}$ by $m$ and $u$ respectively. Then define (making use of Notation \ref{notation: complement of a necklace})
\begin{itemize}
\item the set $\mathcal{S}_{1}$ of all collections of morphisms in $k\Quiv_{S}$:
$$
\left(\beta_{n}: X_{n}\rightarrow \mathcal{C}_{n-1}\right)_{n > 0}
$$
\item the set $\mathcal{S}_{2}$ of all collections of morphisms in $k\Quiv_{S}$:
$$
\left(\alpha_{I}: X_{n}\rightarrow \mathcal{C}_{\vert I\vert}\right)_{I\subseteq \{1,...,n-1\}, n > 0}
$$
such that for all $n > 0$, $I\subseteq \{1,...,n-1\}$ and $j\in \{1,...,n-1\}\setminus I$, we have:
\begin{equation}\label{equation: temp. maps into dg-nerve}
\alpha_{I} = \alpha_{\delta^{-1}_{j}(I)}d^{X}_{j} - m(\alpha_{I_{< j}}\otimes_{S} \alpha_{I_{> j}})\mu^{X}_{j,n-j}
\end{equation}
\end{itemize} 

\begin{Lem}\label{lemma: temp. maps into dg-nerve 1}
The following map is a bijection:
$$
\mathcal{S}_{2}\rightarrow \mathcal{S}_{1}: (\alpha_{I})_{I\subseteq \{1,...,n-1\}, n > 0}\mapsto (\alpha_{\{1,...,n-1\}})_{n > 0}
$$
\end{Lem}
\begin{proof}
It follows from equation \eqref{equation: temp. maps into dg-nerve} that a collection $(\alpha_{I})_{I}$ in $\mathcal{S}_{2}$ is completely determined by the morphisms $\alpha_{\{1,...,n-1\}}$ for $n > 0$. Thus the above map is injective. We now show the surjectivity.

For the surjectivity, take $(\beta_{n})_{n > 0}\in \mathcal{S}_{1}$. Set $\alpha_{\{1,...,n-1\}} = \beta_{n}$ for every $n > 0$. Given $I\subsetneq \{1,...,n-1\}$, choose some $j\in \{1,...,n-1\}\setminus I$ and define $\alpha_{I}: X_{n}\rightarrow \mathcal{C}_{\vert I\vert}$ by equation \eqref{equation: temp. maps into dg-nerve}, inductively on $n$. Using the associativity of $m$ and coassociativity of $\mu$, it follows by induction that this doesn't depend on the choice of $j$. Hence, $(\alpha_{I})_{I}$ belongs to the set $\mathcal{S}_{2}$.
\end{proof}

\begin{Lem}\label{lemma: temp. maps into dg-nerve 2}
For any $(\alpha_{I})_{I}\in \mathcal{S}_{2}$ and $(\beta_{n})_{n > 0} = (\alpha_{\{1,...,n-1\}})_{n > 0}$, the following statements are equivalent:
\begin{enumerate}
\item for all $n > 0$ and $I = \{i_{1} < ... < i_{m}\}\subseteq\{1,...,n-1\}$,
$$
\partial\alpha_{I} = \sum_{j=1}^{m}(-1)^{j-1}\alpha_{I\setminus\{i_{j}\}}
$$
\item for all $n > 0$,
$$
\partial\beta_{n} = \sum_{j=1}^{n-1}(-1)^{j-1}\left(\beta_{n-1}d^{X}_{j} - m(\beta_{j}\otimes_{S} \beta_{n-j})\mu^{X}_{j,n-j}\right)
$$
\end{enumerate}
\end{Lem}
\begin{proof}
It is immediate from the definitions that $(1)$ implies $(2)$. Conversely, assume that $(2)$ holds. Let $I = \{i_{1} < ... < i_{m}\}\subseteq \{1,...,n-1\}$ with $n > 0$. We show by induction on $n$ that $\partial\alpha_{I} = \sum_{j=1}^{m}(-1)^{j-1}\alpha_{I\setminus\{i_{j}\}}$. If $I = \{1,...,n-1\}$, this follows directly from $(2)$. Note that this also covers the case $n = 1$. Otherwise, choose $k\in \{1,...,n-1\}\setminus I$ and let $p = \vert I_{< k}\vert$. Then:
\begin{align*}
\partial\alpha_{I} &= \partial\alpha_{\delta^{-1}_{k}(I)}d^{X}_{k} - \partial m(\alpha_{I_{< k}}\otimes_{S} \alpha_{I_{> k}})\mu^{X}_{k,n-k}\\
&= \sum_{j=1}^{p}(-1)^{j-1}\alpha_{\delta^{-1}_{k}(I)\setminus\{i_{j}\}}d^{X}_{k} + \sum_{j=p+1}^{m}(-1)^{j-1}\alpha_{\delta^{-1}_{k}(I)\setminus\{i_{j}-1\}}d^{X}_{k}\\
&\hphantom{=} -  m(\partial\alpha_{I_{< k}}\otimes_{S} \alpha_{I_{> k}})\mu^{X}_{k,n-k} - (-1)^{p}m(\alpha_{I_{< k}}\otimes_{S} \partial\alpha_{I_{> k}})\mu^{X}_{k,n-k}\\
&= \sum_{j=1}^{p}(-1)^{j-1}\left(\alpha_{\delta^{-1}_{k}(I\setminus\{i_{j}\})}d^{X}_{j} - m(\alpha_{I_{< k}\setminus \{i_{j}\}}\otimes_{S} \alpha_{I_{> k}})\mu^{X}_{k,n-k}\right)\\
&\hphantom{=} + \sum_{j=p+1}^{m}(-1)^{j-1}\left(\alpha_{\delta^{-1}_{k}(I\setminus\{i_{j}\})}d^{X}_{j} - m(\alpha_{I_{< k}}\otimes_{S} \alpha_{I_{> k}\setminus\{i_{j}-k\}})\mu^{X}_{k,n-k}\right)\\
&= \sum_{j=1}^{m}(-1)^{j-1}\alpha_{I\setminus \{i_{j}\}}
\end{align*}
\end{proof}

\begin{Lem}\label{lemma: temp. maps into dg-nerve 3}
For any $(\alpha_{I})_{I}\in \mathcal{S}_{2}$ and $(\beta_{n})_{n > 0} = (\alpha_{\{1,...,n-1\}})_{n > 0}$, the following statements are equivalent:
\begin{enumerate}
\item for all $0\leq i\leq n$ and $I\subseteq \{1,...,n\}$,
$$
\alpha_{I}s^{X}_{i} =
\begin{cases}
u & \text{if }n = 0\\
\alpha_{\sigma_{i}(I)} & \text{if }0 < i < n, \{i,i+1\}\not\subseteq I\\
0 & \text{otherwise}
\end{cases}
$$
\item for all $0\leq i\leq n$,
$$
\beta_{n+1}s_{i}^{X} =
\begin{cases}
u & \text{if }n = 0\\
0 & \text{if }n > 0
\end{cases}
$$
\end{enumerate}
\end{Lem}
\begin{proof}
Note that $(1)$ trivially implies $(2)$. Assume that $(2)$ holds and take $0\leq i\leq n$ and $I\subseteq \{1,...,n\}$. When $I = \{1,...,n\}$, $(1)$ follows directly from $(2)$. In particular, this covers the case $n = 0$. Otherwise, choose $j\in \{1,...,n\}\setminus I$. Without loss of generality, we may assume that $j\leq i$. We proceed by induction on $n > 0$ using equation \eqref{equation: temp. maps into dg-nerve}. Consider the following four cases:
\begin{itemize}
\item If $j < i < n$, then the statements $\{i,i+1\}\subseteq I$, $\{i-1,i\}\subseteq \delta^{-1}_{j}(I)$ and $\{i,i+1\}\subseteq I_{> j}$ are all equivalent and thus
\begin{align*}
&\alpha_{I}s^{X}_{i} = \alpha_{\delta^{-1}_{j}(I)}s^{X}_{i-1}d^{X}_{j} - m(\alpha_{I_{< j}}\otimes_{S} \alpha_{I_{> j}}s^{X}_{i-j})\mu^{X}_{j,n-j}\\
&\quad =
\begin{cases}
\alpha_{\sigma_{i-1}(\delta^{-1}_{j}(I))}d^{X}_{j} - m(\alpha_{I_{< j}}\otimes_{S} \alpha_{\sigma_{i-j}(I_{> j})})\mu^{X}_{j,n-j} = \alpha_{\sigma_{i}(I)} & \text{if }\{i,i+1\}\not\subseteq I\\
0 & \text{if }\{i,i+1\}\subseteq I
\end{cases}
\end{align*}
\item If $j = i < n$, then $\{i,i+1\}\not\subseteq I$ and thus
$$
\alpha_{I}s^{X}_{i} = \alpha_{\delta^{-1}_{i}(I)} - m(\alpha_{I_{< i}}\otimes_{S} \alpha_{I_{> i}}s^{X}_{0})\mu^{X}_{i,n-i} = \alpha_{\delta^{-1}_{i}(I)} = \alpha_{\sigma_{i}(I)}
$$
\item If $j < i = n$,
$$
\alpha_{I}s^{X}_{n} = \alpha_{\delta^{-1}_{j}(I)}s^{X}_{n-1}d^{X}_{j} - m(\alpha_{I_{< j}}\otimes_{S} \alpha_{I_{> j}}s^{X}_{n-j})\mu^{X}_{j,n-j} = 0
$$
\item If $j = i = n$,
$$
\alpha_{I}s^{X}_{n} = \alpha_{\delta^{-1}_{n}(I)} - m(\alpha_{I_{< n}}\otimes_{S} \alpha_{\emptyset}s^{X}_{0})\mu^{X}_{n,0} = \alpha_{\delta^{-1}_{n}(I)} - \alpha_{I_{< n}} = 0
$$
\end{itemize}
Hence we have shown $(1)$.
\end{proof}

\begin{Lem}\label{lemma: Frob. temp. maps into dg-nerve}
Suppose $X$ has a Frobenius structure $Z_{X}$. For any $(\alpha_{I})_{I}\in \mathcal{S}_{2}$ and $(\beta_{n})_{n > 0} = (\alpha_{\{1,...,n\}})_{n > 0}$, the following statements are equivalent:
\begin{enumerate}
\item for all $p,q > 0$ and $I\subseteq \{1,...,p+q\}$,
$$
\alpha_{I}Z^{p,q}_{X} = 0
$$
\item for all $p,q > 0$,
$$
\beta_{p+q}Z^{p,q}_{X} = 0\quad \text{and}\quad \beta_{p+q-1}d^{X}_{p}Z^{p,q}_{X} = m(\beta_{p}\otimes_{S} \beta_{q})
$$
\end{enumerate}
\end{Lem}
\begin{proof}
As before $(1)$ trivially implies $(2)$. Assume that $(2)$ holds and take $p,q > 0$ and $I\subseteq \{1,...,p+q\}$. When $I = \{1,...,p+q\}$ or $I = \{1,...,p+q\}\setminus \{p\}$, $(1)$ follows directly from $(2)$ (using \eqref{equation: temp. maps into dg-nerve}). Thus suppose that there is some $j\in \{1,...,p+q\}\setminus I$ with $j\neq p$. Without loss of generality, we may assume that $p < j$. Then
\begin{align*}
\alpha_{I}Z^{p,q} &= \alpha_{\delta^{-1}_{j}(I)}d^{X}_{j}Z^{p,q} - m(\alpha_{I_{< j}}\otimes_{S} \alpha_{I_{> j}})\mu^{X}_{j,p+q-j}Z^{p,q}_{X}\\
&= \alpha_{\delta^{-1}_{j}(I)}Z^{p,q-1}_{X}(\id_{X_{p}}\otimes_{S} d^{X}_{j-p}) - m(\alpha_{I_{< j}}Z^{p,j-p}_{X}\otimes_{S} \alpha_{I_{> j}})(\id_{X_{p}}\otimes_{S} \mu^{X}_{j-p,p+q-j})
\end{align*}
which is $0$ by induction.
\end{proof}

\begin{proof}[Proof of Proposition \ref{proposition: temp. maps into dg-nerve}]
Note that $f^{*}\mathcal{C}$ is a dg-category with set of objects $S$, so we may replace $\mathcal{C}$ by $f^{*}\mathcal{C}$ and assume that $f$ is the identity on $S$.

We can translate the data of a templicial map $(\alpha,\id_{S}): X\rightarrow N^{dg}_{k}(\mathcal{C})$ as follows. Let $(\alpha_{n}: X_{n}\rightarrow N^{dg}_{k}(\mathcal{C})_{n})_{n > 0}$ be a collection of morphisms in $k\Quiv_{S}$. For any $n > 0$ and $I\subseteq \{0 < n\}^{c}$, consider the following composite:
$$k
\alpha_{I}: X_{n}\xrightarrow{\alpha_{n}} N^{dg}_{k}(\mathcal{C})_{n}\rightarrow \Gamma^{+}(\mathcal{C})_{\{0 < n\}^{c}}\xrightarrow{\pi_{I}} \mathcal{C}_{\vert I\vert}
$$
It follows from the templicial structure of $N^{dg}_{k}(\mathcal{C})$ (Remark \ref{remark: explicit templicial dg-nerve}) that the assignment $(\alpha_{n})_{n > 0}\mapsto (\alpha_{I})_{I\subseteq \{0 < n\}^{c}, n > 0}$ induces a bijection:
\begin{gather*}
\left\lbrace (\alpha_{n}: X_{n}\rightarrow N^{dg}_{k}(\mathcal{C})_{n})_{n > 0}\, \middle\vert\, \forall 0 < j < n: \mu^{N^{dg}_{k}(\mathcal{C})}_{j,n-j}\alpha_{n} = (\alpha_{j}\otimes_{S} \alpha_{n-j})\mu^{X}_{j,n-j}\right\rbrace\\
\simeq\\
\left\lbrace (\alpha_{I}: X_{n}\rightarrow \mathcal{C}_{\vert I\vert})_{\substack{n > 0\\ I\subseteq \{0 < n\}^{c}}}\, \middle\vert\, \forall n, \forall I: \partial\alpha_{I} = \sum_{j=1}^{m}(-1)^{j-1}\alpha_{I\setminus\{i_{j}\}}\right\rbrace
\end{gather*}
where we have denoted $I = \{i_{1} < ... < i_{m}\}\subseteq \{1,...,n-1\}$.

Further, it follows that the morphisms $(\alpha_{n})_{n > 0}$ are compatible with the degeneracy maps if and only if for all $0\leq i\leq n$ and $I\subseteq \{1,...,n\}$,
$$
\alpha_{I}s^{X}_{i} =
\begin{cases}
u & \text{if }n = 0\\
\alpha_{\sigma_{i}(I)} & \text{if }0 < i < n, \{i,i+1\}\not\subseteq I\\
0 & \text{otherwise}
\end{cases}
$$
and $(\alpha_{n})_{n > 0}$ are compatible with the face maps if and only if for all $0 < j < n$ and $I\subseteq \{1,...,n-2\}$:
$$
\alpha_{I}d^{X}_{j} = \alpha_{\delta_{j}(I)} + m(\alpha_{\delta_{j}(I)_{< j}}\otimes_{S} \alpha_{\delta_{j}(I)_{> j-1}})\mu^{X}_{j,n-j}
$$
Hence the result follows from Lemmas \ref{lemma: temp. maps into dg-nerve 2} and \ref{lemma: temp. maps into dg-nerve 3}.
\end{proof}

\begin{proof}[Proof of Corollary \ref{corollary: Frob. temp. maps into dg-nerve}]
Let $\alpha: X\rightarrow N^{dg}_{k}(\mathcal{C})$ be a templicial map and consider the collection $(\alpha_{I}: X_{n}\rightarrow \mathcal{C}_{\vert I\vert})_{n > 0, I\subseteq \{1,...,n\}}$ associated to $\alpha$ as in the proof of Proposition \ref{proposition: temp. maps into dg-nerve}. It follows from the Frobenius structure of $N^{dg}_{k}(\mathcal{C})$ (Remark \ref{remark: explicit templicial dg-nerve}) that $\alpha$ is a Frobenius templicial map if and only if $\alpha_{I}Z^{p,q} = 0$ for all $p,q > 0$ and $I\subseteq \{1,...,p+q\}$. Hence, the result follows from Lemma \ref{lemma: Frob. temp. maps into dg-nerve}.

\end{proof}

\printbibliography

\end{document}